\documentclass[10pt]{amsart}
\usepackage{amsmath,amsxtra,amssymb,amsthm,amsfonts}
\usepackage{graphicx}
\usepackage{color}
\usepackage{enumitem}
\usepackage[english]{babel}
\usepackage[autostyle]{csquotes}
\usepackage{mathrsfs}
\usepackage{etoolbox}

\numberwithin{equation}{section}
\newtheorem{lemma}{Lemma}[section]
\newtheorem{prop}[lemma]{Proposition}
\newtheorem{theorem}[lemma]{Theorem}

\newtheorem{cor}[lemma]{Corollary}

\newtheorem{rem}[lemma]{Remark}

\newtheorem{defi}[lemma]{Definition}
\newtheorem{exam}[lemma]{Example}

\renewcommand{\for}{\begin{eqnarray*}}

\newcommand{\mel}{\end{eqnarray*}}

\newcommand{\kl}{\pl \le \pl}

\newcommand{\lel}{\pl = \pl}

\newcommand{\tr}{\mathrm{tr}}

\newcommand{\zz}{{\mathbb Z}}

\newcommand{\vertiii}[1]{{\left\vert\kern-0.25ex\left\vert\kern-0.25ex\left\vert #1 
		\right\vert\kern-0.25ex\right\vert\kern-0.25ex\right\vert}}

\parindent0em

\newcommand{\pl}{\hspace{.1cm}}

\newcommand{\qd}{\end{proof}\vspace{0.5ex}}

\newcommand{\si}{\sigma}



\newcommand{\pf}{\begin{proof}}

\newcommand{\xspace}{\hbox{\kern-2.5pt}}
\newcommand{\xyspace}{\hbox{\kern-1.1pt}}

\definecolor{LightGray}{rgb}{0.94,0.94,0.94}
\definecolor{VeryLightBlue}{rgb}{0.9,0.9,1}
\definecolor{LightBlue}{rgb}{0.8,0.8,1}
\definecolor{DarkBlue}{rgb}{0,0,0.6}
\definecolor{LightGreen}{rgb}{0.88,1,0.88}
\definecolor{MidGreen}{rgb}{0.6,1,0.6}
\definecolor{DarkGreen}{rgb}{0,0.6,0}
\definecolor{DarkGrreen}{rgb}{0,0.8,0}
\definecolor{VeryLightYellow}{rgb}{1,1,0.9}
\definecolor{LightYellow}{rgb}{1,1,0.6}
\definecolor{MidYellow}{rgb}{1,1,0.5}
\definecolor{DarkYellow}{rgb}{0.8,1,0.3}
\definecolor{VeryLightRed}{rgb}{1,0.9,0.9}
\definecolor{LightRed}{rgb}{1,0.8,0.8}
\definecolor{DarkRed}{rgb}{0.8,0.2,0}
\definecolor{DarkRedb}{rgb}{0.6,0.2,0}
\definecolor{DarkLila}{rgb}{0.8,0,1}
\definecolor{Beige}{rgb}{0.96,0.96,0.86}
\definecolor{Gold}{rgb}{1.,0.84,0.}
\definecolor{Goldb}{rgb}{0.7,0.3,0.5}
\definecolor{MyYellow}{rgb}{1.,0.84,0.8}


\begin{document}
\title[Noncommutative versions of the AGM inequality]{ Noncommutative versions of the arithmetic-geometric mean inequality}

\author{WAFAA ALBAR}

\author{MARIUS JUNGE}

\author{MINGYU ZHAO}

\address{UNVERSITY OF ILLINOIS AT URBANA-CHAMPAIGN}
\email{walbar2@illinois.edu}

\address{UNVERSITY OF ILLINOIS AT URBANA-CHAMPAIGN}
\email{junge@math.uiuc.edu}
\thanks{Partially supported by DMS 1501103 and BigData 1447879}

\address{UNVERSITY OF ILLINOIS AT URBANA-CHAMPAIGN}
\email{mzhao16@illinois.edu}
\thanks{}


\maketitle

	\mbox{} \hspace{4mm} Recht and R\'{e} in \cite{recht2012beneath} introduced the noncommutative arithmetic geometric mean inequality (NC-AGM) for matrices with a constant depending on the degree $d$ and the dimension $m$. In this paper we prove AGM inequalities with a dimension-free constant for general operators. We also prove an order version of the AGM inequality under additional hypothesis. Moreover, we show that our AGM inequality almost holds for many examples of random matrices .

\section{Introduction}
 \mbox{} \hspace{4mm} Variations of the arithmetic-geometric mean (AGM) inequality have many applications in analysis and geometry. As pointed out by R\'{e} and Recht in \cite{recht2012beneath}, noncommutative versions of the AGM inequalities are relevant to machine learning. In particular, their proof, which employed the classical MacLaurin inequalities, led to improved convergence rate of the of the algorithms in machine learning. \vspace{0.2in}\\ 
Let us recall the famous MacLaurin inequalities for positive real numbers $x_1,...,x_n$ and the normalized $d$-th symmetric sums as 
\begin{equation*}
S_d=\binom{n}{d}^{-1} \sum_{\substack{\tau\in [n]\\ |\tau|=k}}\prod_{i\in \tau }x_i \pl.
\end{equation*}
where $ 1\le d\le n $. According to the MacLaurin inequalities, we have
\begin{equation*}
S_1\geq \sqrt[2]S_2\geq \sqrt[3]S_3\geq ...\geq \sqrt[n]S_n \pl .
\end{equation*}
In particular,	$S_1\geq\sqrt[n]S_n$ is the standard AGM inequality. For more details about the classical AGM inequality see \cite{hardy1952inequalities}. In this paper,  we will discuss noncommutative versions of MacLaurin's inequalities. Indeed, we will consider a generalized AGM inequality for the norm and the order. It may come as a surprise to the operator algebra community that these inequalities are motivated by problems in machine learning, stochastic gradient method (see Buttou \cite{bottou1998online} and the reference there is in \cite{recht2012beneath}), and randomized coordinates descent (see Nesterov \cite{nesterov2012efficiency}). This interesting connection and an overview of known results on this topic can be found in \cite{recht2011parallel} and \cite{recht2012beneath}. In fact, these methods contain an iteration procedure which can be performed with or without replacement samples. Recht and R\'{e}, in \cite{recht2011parallel}, study the performance of both. They show that the expected convergence rate without replacement is faster than that with replacement. They proved this result by using a particular AGM inequality.\vspace{0.2in}

In the effort to generalize the classical AGM inequality to the noncommutative setting, a standard but naive procedure in noncommutative analysis is to replace scalars by operators. Famous examples of this strategy are Cauchy-Schwarz type inequalities for $C^*$-modules, Khintchine, and martingale inequalities.(See e.g. LP\cite{lust1986inegalites}, LPP\cite{lust1991non}, PXu\cite{pisier2003non}, Narcisse\cite{randrianantoanina2002non}, J\cite{junge2002doob} , JXu1\cite{junge2003noncommutative}, JXu2\cite{junge2003noncommutative}. For a general survey see \cite{pisier2003non}.) Proving these noncommutative extensions often employs a combination of functional analytic and combinatorial methods. In fact, the key results of this paper heavily rely on Pisier's interpretation of Rota's M\"{o}bius formulae for partitions. \vspace{0.2in}

A NC-AGM inequality would ask whether 
\begin{align}\label{AGM0}
A_1\cdots A_n \pl \stackrel{?}{\le} \pl  (\frac{1}{n}\sum_{j=1}^n A_j)^n 
\end{align}
holds for positive operators $A_{1},...,A_{n}$ on a Hilbert space. (In this context we shall interpret $x\le y$ as requiring that $y-x$ is positive semi-definite.) However, for positive operators $A$ and $B$, the product $AB$ may not be positive or even self-adjoint, so the inequality (1.1) may not make sense. Inspired by  Recht and R\'{e}, we modify \eqref{AGM0} by replacing the left hand side with the average of all the products of the operators $A_{i}$, which turns out to be self-adjoint. Following the MacLaurin approach, we may now ask whether the AGM inequality holds on average, i.e. 
\begin{align}\label{AGM1}
\frac{1}{n!} \sum_{\si\in S_n} A_{\si(1)}\cdots A_{\si(n)} \pl \stackrel{?}{\le} \pl  (\frac{1}{n}\sum_{j=1}^n A_j)^n.
\end{align}
 Unfortunately, we can not prove \eqref{AGM1} in general. A milder version of \eqref{AGM1} is to ask for 
\begin{align}\label{AGM2}
\|\frac{1}{n!} \sum_{\si\in S_n} A_{\si(1)}\cdots A_{\si(n)} \| \pl \stackrel{?}{\le} \pl  \|(\frac{1}{n}\sum_{j=1}^n A_j)^n\| \pl, 
\end{align}
where $\|x\|=\|x\|_{B(H)}$ refers to the standard operator norm of bounded operators on a Hilbert space $H$. The inequality \eqref{AGM2} is a particular case of the noncommutative MacLaurin inequalities discussed in \cite{recht2011parallel}. Indeed, for fixed $d$ we may consider the following average product of noncommutative operators of length $d$: 

\[ P_d(A_1,...,A_n) \lel \frac{1}{n\cdots (n-d+1)} 
\sum_{1\le j_1,...,j_d\le n \mbox{ \scriptsize all different}} 
A_{j_1}\cdots A_{j_d} \pl. \] 
We refer to the example in \cite{recht2012beneath} for the fact that the symmetrization for the operators in the AGM inequality is required. In \cite{recht2011parallel}, R\'{e} and Recht posed the following question: Is it true that for positive bounded operators $A_1,...,A_n$ on a Hilbert space one has
	\begin{align} \label{NAGM}
	\|P_d(A_1,...,A_n)\|^{1/d} &\le  \|P_1(A_1,...,A_n)\|  \pl?\pl 
	\end{align}
They proved that \eqref{NAGM} holds when $A_{1},...,A_{n}$ are matrices that mutually commute. Moreover, they observed that for operators $A_1,...,A_n$ on an $m$-dimensional Hilbert space one has 
\begin{align}\label{mAGM}
\|P_d(A_1,...,A_n)\|_{B(\ell_2^m)}^{1/d} \kl m \pl \|P_1(A_1,...,A_n)\|  \pl .
\end{align} 
We will prove the AGM inequality for the norm with a constant independent of the dimension $m$. 
\begin{theorem}
	For operators $A_{1},...,A_{n} \geq 0$ on a Hilbert space $H$,
	\[ \|P_d(A_1,...,A_n)\|^{1/d} \kl d \pl \|P_1(A_1,...,A_n)\| .\] 
\end{theorem}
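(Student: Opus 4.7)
My plan is to combine Rota's Möbius inversion on the partition lattice $\mathcal{P}(d)$ with a Stinespring-type dilation of the $A_j$. By homogeneity I normalize $\|\bar A\|=1$ (so that $\sum_j A_j\le n I$), which reduces the target to $\|P_d\|\le d^d$. For each $\sigma\in\mathcal{P}(d)$ introduce
\[
M_\sigma \,=\, \sum_{\substack{f:[d]\to[n] \\ \ker f \ge \sigma}} A_{f(1)}\,A_{f(2)}\cdots A_{f(d)},
\]
where $\ker f\ge\sigma$ means $f$ is constant on every block of $\sigma$. Since the sum of $A_{f(1)}\cdots A_{f(d)}$ over strictly injective $f$ equals $n(n-1)\cdots(n-d+1)\,P_d$, Möbius inversion yields
\[
n(n-1)\cdots(n-d+1)\,P_d \,=\, \sum_{\sigma\in\mathcal{P}(d)} \mu(\hat 0,\sigma)\,M_\sigma,
\]
with $|\mu(\hat 0,\sigma)|=\prod_{B\in\sigma}(|B|-1)!$.

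The technical heart of the proof — and the step I expect to be the main obstacle — is the dimension-free bound
\[
\|M_\sigma\|\,\le\,n^d\,\|\bar A\|^d \qquad (\sigma\in\mathcal{P}(d)).
\]
To prove it I would use the dilation $V\colon H\to\ell_2^n\otimes H$, $V\xi=\sum_j e_j\otimes A_j^{1/2}\xi$, so that $V^*V=n\bar A$, $\|V\|^2=n\|\bar A\|$, and $A_j=V^*P_jV$ for the mutually orthogonal projections $P_j=e_je_j^*\otimes I_H$. Setting $T=VV^*$ (with $\|T\|=n\|\bar A\|$), a direct computation gives $M_\sigma=V^*R_\sigma V$, where
\[
R_\sigma \,=\, \sum_{j_1,\ldots,j_k} P_{f(1)}\,T\,P_{f(2)}\,T\cdots T\,P_{f(d)},
\]
reducing the task to $\|R_\sigma\|\le\|T\|^{d-1}$. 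I would establish this by induction on $d$ using the algebraic identity
\[
\sum_j P_j X_1 P_j X_2\cdots X_{m-1} P_j \,=\, E(X_1)E(X_2)\cdots E(X_{m-1}),
\]
where $E(Y)=\sum_j P_j Y P_j$ is the conditional expectation onto the diagonal subalgebra of $M_n(B(H))$ (a complete contraction). For non-crossing $\sigma$ one peels off an innermost block directly via this identity. For crossing partitions such as $\{\{1,3\},\{2,4\}\}$ at $d=4$ one first reduces a single block with the identity above, then closes the bound by an operator Cauchy--Schwarz estimate $\|\sum_j X_jY_j\|^2\le\|\sum_j X_jX_j^*\|\cdot\|\sum_j Y_j^*Y_j\|$ together with the CP-contraction $\|\sum_j A_j Y A_j\|\le\|Y\|\cdot\|\sum_j A_j^2\|\le n^2\|\bar A\|^2\|Y\|$. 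This interplay between Möbius inversion and conditional expectation estimates on a dilated Hilbert space is precisely where Pisier's operator-theoretic reading of Rota's formulae becomes essential.

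Granting the key norm bound, the classical evaluation
\[
\sum_{\sigma\in\mathcal{P}(d)} \prod_{B\in\sigma}(|B|-1)! \,=\, d!,
\]
obtained from the generating function $\sum_\sigma\prod_B(|B|-1)!\,x^{|\sigma|}=x(x+1)\cdots(x+d-1)$ at $x=1$, combined with the Möbius expansion gives
\[
\|P_d\| \,\le\, \frac{d!\,n^d}{n(n-1)\cdots(n-d+1)}\,\|\bar A\|^d.
\]
A brief monotonicity check shows the prefactor is nonincreasing on $n\ge d$, attaining its maximum $d^d$ at $n=d$. Hence $\|P_d\|\le d^d\|\bar A\|^d$, and taking $d$-th roots finishes the proof.
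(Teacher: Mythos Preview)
Your global architecture is exactly the paper's: write $\frac{n!}{(n-d)!}P_d=\langle\dot 0\rangle$, expand via M\"obius inversion as $\sum_{\sigma}\mu(\dot 0,\sigma)[\sigma]$ (your $M_\sigma$ is the paper's $[\sigma]$), use $\sum_\sigma|\mu(\dot 0,\sigma)|=d!$, and finish with the monotonicity of $\tfrac{d!\,n^d(n-d)!}{n!}$ in $n\ge d$ to get the constant $d^d$. All of that is identical.

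The divergence is in how you prove the key bound $\|M_\sigma\|\le\|\sum_j A_j\|^d$. The paper does \emph{not} use a single Stinespring dilation; instead it applies Pisier's matrix-unit trick: for each block $A_m$ of $\sigma$ one introduces a \emph{separate} tensor factor $M_n$ and sets
\[
Z^k_{j}=1\otimes\cdots\otimes t_{A_m}(j)\otimes\cdots\otimes 1\otimes A_{j},\qquad t_{A_m}(j)\in\{e_{1j},e_{jj},e_{j1}\},
\]
according to whether $k$ is the first, a middle, or the last index of its block. This yields the exact factorization $[\sigma]=\prod_{k=1}^d\bigl(\sum_j Z^k_j\bigr)$ \emph{uniformly for every} $\sigma$, crossing or not, so that
\[
\|[\sigma]\|\le\prod_{k\in\sigma_s}\Bigl\|\sum_j A_j\Bigr\|\cdot\prod_{k\in\sigma_{ns}}\Bigl\|\sum_j A_j^2\Bigr\|^{1/2}\le\Bigl\|\sum_j A_j\Bigr\|^{d},
\]
the last step using positivity via $\|\sum A_j^2\|^{1/2}\le\|\sum A_j\|$.

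Your single-copy dilation $A_j=V^*P_jV$ reduces the problem to $\|R_\sigma\|\le\|T\|^{d-1}$. For non-crossing $\sigma$ your interval-peeling via the conditional expectation $E$ is fine; your $d=4$ crossing computation also checks out. But the proposal does not contain an argument for $\|R_\sigma\|\le\|T\|^{d-1}$ for an \emph{arbitrary} crossing partition: ``induction on $d$'' together with one Cauchy--Schwarz step does not obviously propagate through a general nesting/crossing pattern, and you have not supplied the inductive hypothesis or step. That is the genuine gap. It is precisely the crossing difficulty that Pisier's multi-factor embedding is designed to dissolve---by giving each block its own $M_n$ the constraints decouple and no case analysis is needed. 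If you want to salvage your route, the cleanest fix is to import that same idea one level up (tensor in one $M_n$ per block when bounding $R_\sigma$), at which point your argument and the paper's become the same.
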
 
Let us now consider the \enquote{order version} of the AGM inequality. Here we add the additional assumption $\sum A_{i}=n$. In order to illustrate the technique we use generally, it is good to start with $d=3$.

\begin{theorem}\label{thm1.3}
	Let $ n\geq 6$. If $A_{1},...,A_{n}$ are self-adjoint operators such that $\sum_{i} A_{i}=n $. Then
	$  P_3(A_1,...,A_n)^{1/3}\leq 1 $.
\end{theorem}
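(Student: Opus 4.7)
The plan is to convert the operator inequality $P_3 \le I$ into a sum of one-operator spectral inequalities by using Rota's M\"obius inversion on the partition lattice $\Pi_3$ to write $P_3$ in closed form in terms of the power sums $\sum_i A_i^k$.

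First, I would expand $Q := n(n-1)(n-2)\,P_3 = \sum_{i,j,k\text{ distinct}} A_i A_j A_k$ via the partition lattice $\Pi_3$. For each partition $\pi$ of $\{1,2,3\}$ let $\tilde Q_\pi$ denote the sum of $A_{i_1}A_{i_2}A_{i_3}$ over tuples required to agree on blocks of $\pi$ with no condition between blocks. The M\"obius values $\mu(\hat 0,\cdot)$ on $\Pi_3$ are $1,-1,-1,-1,+2$, so
\begin{equation*}
Q \;=\; \tilde Q_{\hat 0} - \tilde Q_{12|3} - \tilde Q_{13|2} - \tilde Q_{23|1} + 2\tilde Q_{\hat 1}.
\end{equation*}
The crucial observation is that $\sum_i A_i = n I$ is a scalar multiple of the identity and therefore commutes with every operator, so even the a priori noncommutative middle term collapses:
\begin{equation*}
\tilde Q_{13|2} \;=\; \sum_{i,j} A_i A_j A_i \;=\; \sum_i A_i (n I) A_i \;=\; n \sum_i A_i^2.
\end{equation*}
All five terms thereby reduce to power sums, yielding the closed form $Q = n^3 - 3n\sum_i A_i^2 + 2\sum_i A_i^3$.

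Second, I would centre the operators by setting $B_i := A_i - I$, so that $\sum_i B_i = 0$. Expanding $\sum_i A_i^2$ and $\sum_i A_i^3$ in the $B_i$ and using $\sum_i B_i = 0$ to kill the linear cross-terms, the inequality $P_3 \le I$ rewrites cleanly as
\begin{equation*}
\sum_i \bigl( 3(n-2) B_i^2 - 2 B_i^3 \bigr) \;\ge\; 0.
\end{equation*}
The third and final step is then to show each summand is individually nonnegative by functional calculus. Writing $p(x) = x^2\bigl(3(n-2) - 2x\bigr)$, one needs $p(B_i) \ge 0$ for every $i$. Under the positivity hypothesis $A_i \ge 0$ (which is implicit in the AGM context and genuinely needed, since a large negative scalar example already breaks the inequality), the identity $A_i = nI - \sum_{j\ne i} A_j \le nI$ forces $\mathrm{spec}(B_i) \subset [-1, n-1]$. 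Since $p \ge 0$ on $[-1, 3(n-2)/2]$ and $n-1 \le 3(n-2)/2$ whenever $n \ge 4$, the functional calculus gives $p(B_i) \ge 0$ termwise, completing the proof.

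The main subtlety, and really the crux of the argument, is the first step: the reduction to power sums depends entirely on the scalar condition $\sum_i A_i = nI$ collapsing the only genuinely noncommutative term $\sum_{i,j} A_i A_j A_i$. Once that collapse is available, the remainder is bookkeeping plus termwise spectral calculus. The approach in fact works for all $n \ge 4$; the stated threshold $n \ge 6$ appears to be a mild slack in the authors' route rather than a sharp requirement.
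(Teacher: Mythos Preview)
Your proof is correct and is essentially the paper's own argument: both reduce $P_3$ to the closed form $1 - \tfrac{3}{n(n-1)}\sum_i a_i^2 + \tfrac{2}{n(n-1)(n-2)}\sum_i a_i^3$ (with $a_i=A_i-I$) via the collapse of the only noncommutative cross term afforded by the scalar hypothesis $\sum_i A_i = nI$ (equivalently $\sum_i a_i=0$), and then finish with a one-operator spectral bound of the type $a_i^3 \le c\,a_i^2$. The only differences are cosmetic (you apply M\"obius inversion to the $A_i$ and centre afterwards, the paper centres first and expands each $P_k(a)$) and in the final constant---the paper uses the cruder $\|a_i\|\le n$, forcing $n\ge 6$, while your termwise bound using $\mathrm{spec}(B_i)\subset[-1,n-1]$ sharpens this to $n\ge 4$; you are also right that positivity $A_i\ge 0$ is implicitly needed (and used) in both arguments despite the stated hypothesis being merely self-adjoint.
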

 For the proof we consider the mean-zero operators $a_{i}:=A_{i}-1$. Observe the operators $a_{i}$ are self-adjoint and $\sum\limits_{i=1}^{n}a_{i}=0$. It follows easily that
$$P_{3}(A_{1},...,A_{n})=1+\binom{3}{1}P_{1}(a_{1},...,a_{n})+\binom{3}{2}P_{2}(a_{1},...,a_{n})+\binom{3}{3}P_{3}(a_{1},...,a_{n}) . $$  Straightforward computations using $\sum a_{i}=0 $ reveal that \\
$P_{1}(a_{1},...,a_{n})=\frac{(n-1)!}{n!}\sum\limits_{i}a_{i}=0$\\
$P_{2}(a_{1},...,a_{n})=\frac{(n-2)!}{n!} \sum\limits_{i\neq j}a_{i}a_{j}= \frac{(n-2)!}{n!} \Big( (\sum\limits a_{i})^{2}-\sum\limits a_{i}^{2}\Big)= -\frac{(n-2)!}{n!}\sum\limits a_{i}^{2}$\\
\begin{align*}
& P_{3}(a_{1},...,a_{n})=\frac{(n-3)!}{n!} (\sum\limits_{i\neq j\neq k} a_{i}a_{j}a_{k} )\\ 
&={\frac{(n-3)!}{n!}} \Big( (\sum\limits_{i\neq j \neq k} a_i)^3-(\sum\limits_{i=j} a_i ^2)(\sum\limits_{k} a_k) -(\sum\limits_{i} a_i)(\sum\limits_{j=k} a_j ^2)-\sum\limits_{j} \sum\limits_{i=k \neq j} a_i a_j a_i +2(\sum\limits_{i=j=k} a_i ^3)\Big)\\
&=2{\frac{(n-3)!}{n!}}\Big(\sum\limits_{i}a_{i}^{3}\Big).
\end{align*}

This leads to the form	$P_{3}(A_{1},...,A_{n})=1-\frac{3}{n(n-1)}\sum a_{i}^{2}+\frac{2}{n(n-1)(n-2)}\sum a_{i}^{3}.$ Together with $\sum a_{i}^3\leq  \|a_{i}\| \sum a_{i}^2 \leq n \sum a_{i}^2 ,$ this yields
\begin{equation}\label{d3}
P_{3}(A_{1},...,A_{n})\leq 1-\frac{3}{n(n-1)}\sum a_{i}^{2}+\frac{2n}{n(n-1)(n-2)}\sum a_{i}^{2}.
\end{equation}

Since
$\frac{2n}{n(n-1)(n-2)}\leq\frac{3}{n(n-1)}$ holds for all $n\geq 6$, the right side of \eqref{d3} is at most 1 and we are done. A far-reaching generalization of this idea leads to the following result.
\begin{theorem} Fix $n$ and $d$. Suppose $A_{1},...,A_{n}$ and $a_{i}$ are defined as above, $\sum\limits_{i} A_{i}=n$ and 
\begin{enumerate}
\item[i)]  $P_1(A_1,...,A_n)=\frac{\sum_{i} A_{i}}{n}=1,$ \item[ii)]  $\|(\sum a_{j}^2)^{\frac{1}{2}}\|\leq\frac{n}{3d}.$
\end{enumerate}
	Then the AGM inequality holds in the order sense:
    $$P_d(A_1,...,A_n)  \leq P_1(A_1,...,A_n)^d =1.$$
\end{theorem}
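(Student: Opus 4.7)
The plan is to generalize the direct calculation from the $d=3$ case. I would first substitute $a_i = A_i - 1$, so that the $a_i$ are self-adjoint with $\sum_i a_i = 0$; writing $Q = \sum_j a_j^2$ and $R = \|Q^{1/2}\|\le n/(3d)$, one has $\|Q\|\le R^2$ and $\|a_j\|\le R$ for each $j$. Expanding each factor $A_{\sigma(i)} = 1+a_{\sigma(i)}$ in the product defining $P_d(A)$ and grouping by the number $k$ of positions at which $a$ (rather than $1$) is chosen gives the binomial identity
\[
P_d(A) = \sum_{k=0}^d\binom{d}{k}P_k(a_1,\ldots,a_n),
\]
so it suffices to show $\binom{d}{2}P_2(a) + \sum_{k=3}^d\binom{d}{k}P_k(a)\le 0$ in the operator order.

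Next I would invoke M\"obius inversion on the partition lattice (Pisier's version of Rota's formula). With $n^{(k)}=n(n-1)\cdots(n-k+1)$ and $N_\pi(a)=\sum_{\sigma\colon\ker\sigma\ge\pi}a_{\sigma(1)}\cdots a_{\sigma(k)}$, one has $n^{(k)}P_k(a) = \sum_{\pi\in\mathcal P_k}\mu(\hat 0,\pi)N_\pi(a)$ for each $k\ge 1$. Because $\sum_j a_j=0$, any partition $\pi$ containing a singleton block $\{i\}$ contributes $N_\pi=0$ (summing $\sigma(i)$ freely over $[n]$ produces a factor of $\sum_j a_j$), so the sum collapses to partitions $\pi\in\mathcal P_k^{\ge 2}$ with all blocks of size at least $2$. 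In particular $P_1(a)=0$, and for $k=2$ only the one-block partition remains, yielding the dominant negative contribution $\binom{d}{2}P_2(a) = -\tfrac{d(d-1)}{2n(n-1)}\,Q$.

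For $k\ge 3$ the crux is to establish, for each $\pi\in\mathcal P_k^{\ge 2}$, an operator bound of the form $N_\pi(a)\le c_\pi R^{k-2}Q$ with absolute constants $c_\pi$. The basic tools are the univariate inequality $\pm a_v^{s}\le R^{s-2}a_v^2$ (used for blocks of size $s>2$) and the bilateral bound $\sum_v a_v X a_v\le\|X\|\,Q$ for $X\ge 0$, applied recursively to strip off ``column'' sums from the outside of $N_\pi(a)$ inwards. Together these handle single-block partitions, non-crossing matchings (producing powers of $Q$), and nested matchings such as $\sum_v a_v Q a_v$ directly, and --- with more care --- the crossing matchings such as $\sum_{v_1,v_2}a_{v_1}a_{v_2}a_{v_1}a_{v_2}$ as well. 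Combined with $|\mu(\hat 0,\pi)|=\prod_B(|B|-1)!$, summing over $\pi\in\mathcal P_k^{\ge 2}$ gives $\sum_\pi|\mu(\hat 0,\pi)|c_\pi\le D_k$, the $k$-th derangement count, which satisfies $D_k/k!\le 1/2$ for every $k\ge 2$.

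Assembling: after inserting $R=n/(3d)$ and using $\binom{d}{k}R^{k-2}/n^{(k)}\lesssim d^2/(k!\,3^{k-2}n^2)$, each $k\ge 3$ summand is at most $\tfrac{2D_k/k!}{3^{k-2}}\cdot\tfrac{d(d-1)}{2n(n-1)}\,Q$, and the geometric series $\sum_{k\ge 3}\tfrac{2D_k/k!}{3^{k-2}}\le 1/2$ shows $\sum_{k\ge 3}\binom{d}{k}P_k(a)\le \tfrac{1}{2}\binom{d}{2}(-P_2(a))$. Adding back the $k=2$ term then gives $P_d(A)\le 1-\tfrac12\cdot\tfrac{d(d-1)}{2n(n-1)}\,Q\le 1$. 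The main obstacle I anticipate is the operator (rather than norm) bound $N_\pi(a)\le c_\pi R^{k-2}Q$ for \emph{crossing} matchings, where iterated Cauchy-Schwarz readily gives a norm estimate of order $R^k$ but upgrading this to a comparison with $Q$ requires a more refined argument; this is precisely the step at which the calibration $R\le n/(3d)$ in hypothesis (ii) becomes essential.
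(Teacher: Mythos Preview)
Your overall architecture matches the paper's: the binomial expansion $P_d(A)=\sum_k\binom{d}{k}P_k(a)$, the M\"obius inversion $n^{(k)}P_k(a)=\sum_\pi\mu(\hat 0,\pi)[\pi]$, the vanishing of $[\pi]$ whenever $\pi$ has a singleton block, and the identification of the dominant negative term $\binom{d}{2}P_2(a)=-\tfrac{d(d-1)}{2n(n-1)}Q$ are exactly what the paper does. The gap is precisely the one you flag yourself: an \emph{order} bound $[\pi]\le c_\pi R^{k-2}Q$ valid for crossing partitions. Your two tools, $\pm a_v^s\le R^{s-2}a_v^2$ and $\sum_v a_v X a_v\le\|X\|Q$ for $X\ge0$, only allow you to peel off matched pairs from the outside in; they break down as soon as the outermost positions lie in different blocks. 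For the archetype $\sum_{v_1,v_2}a_{v_1}a_{v_2}a_{v_1}a_{v_2}$ neither applies, and you have not supplied an alternative.

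The paper fills this gap with two devices you are missing. First, Pisier's matrix-unit embedding: one lifts each $a_{i}$ at position $j$ to $Z_{i}^{j}\in B(\ell_2^n)^{\otimes|\pi|}\otimes B(H)$ by tensoring with $e_{1i}$, $e_{ii}$ or $e_{i1}$ in the slot of the block containing $j$ (according as $j$ is the first, an interior, or the last position of that block). This yields a genuine factorization $[\pi]=A_1\cdots A_k$ with $A_j=\sum_i Z_i^{j}$, valid for \emph{every} $\pi$, crossing or not. Second, one pairs $\pi$ with its reversal $\bar\pi$ (legitimate in the M\"obius sum since $\mu(\hat0,\pi)=\mu(\hat0,\bar\pi)$) and applies the elementary inequality $ab+b^*a^*\le t^2aa^*+t^{-2}b^*b$ with $a=A_1$, $b=A_2\cdots A_k$. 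Because positions $1$ and $k$ are necessarily the minimum and maximum of their respective blocks, the matrix-unit structure forces $A_1A_1^*$ and $A_k^*A_k$ to equal $\sum_i a_i^2$ tensored with rank-one projections, while each interior $\|A_j\|$ is bounded by $R$. Choosing $t^2=R^{k/2-1}$ gives $[\pi]+[\bar\pi]\le 2R^{k-2}Q$ uniformly over all $\pi$ without singletons, hence $|P_k(a)|\le \frac{(n-k)!}{n!}\,k!\,R^{k-2}Q$ in the order sense. From there the arithmetic you sketch (inserting $R\le n/(3d)$ and summing a geometric series in $d/(3d)=1/3$) completes the proof. Your derangement refinement $D_k$ in place of $k!$ is correct in principle but unnecessary, and in any case rests on the per-partition constants $c_\pi$ you have not actually produced.
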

Note that these techniques work efficiently when $d$ is very large.\vspace{0.2in}\\
This paper is organized as follows. 
In section 2, we review the analytic and combinatorial tools needed to prove Theorem 1.2, especially Pisier's interpretation of Rota's results on M\"{o}bius transforms for partitions. In section 3, we combine the results from section 2 with Pisier's group construction for partitions in \cite{pisier2000inequality} to obtain our key norm and order estimate. In section 4 and 5,  a combination of Pisier's partition method and probabilistic results allow \enquote{almost AGM} inequalities hold in many different scenarios. We confirm the AGM inequality up to $\varepsilon$ for many random matrices, in particular Wishart random matrices, more general vector-valued moments of convex bodies, and freely independent operators.
We should point out that in contrast to results on averages of random matrices in R\'{e} and Recht in \cite{recht2012beneath}, our estimates hold with high probabilities.

\section{Partition and  M{\"o}bius Formula }
 \mbox{} \hspace{4mm} We need some definitions from the combinatorial theory of partitions. Let $\mathbb{P}_d$ be the lattice of all the partitions of $\{1,...,d\}$. For two partitions $\sigma$ and $\pi$, we write $\sigma \leq \pi $ if every block of the partition $\sigma$ is contained in some block of $\pi$ (i.e., any block of the partition of $\pi$ can be written as a union of blocks of $\sigma$). In other words, $\pi$ is a refinement of $\sigma$. There are two trivial partitions, $\dot{0}$ and $\dot{1}$, where $\dot{0}$ is the partition into $n$ singletons and $\dot{1}$ is the partition of a single block. For a partition $\pi$, $\nu({\pi})$ is the number of the blocks of the partition $\pi$ and $r_{i}(\pi)$ is the number of blocks of $\pi$ with cardinality $i$ such that $\sum_{i=1}^{d}ir_i(\pi)=d;$ and  $\sum_{i=1}^{d}r_i(\pi)=\nu(\pi)$. For more information on partitions, see \cite{andrews1998theory} and \cite{rota1964foundations}.\vspace{0.2in} \\ 
 Let us recall some main results on the M\"{o}bius function $\mu$ in \cite{pisier2000inequality} which are crucial for our paper.

\begin{prop}\label{prop} 
	(Pisier's M{\"o}bius inversion formula)
	For any $d\in \mathbb{N}$ there exists a function $\mu :\mathbb{P}_{d} \times \mathbb{P}_{d}\longrightarrow \zz$ such that for every vector space $V$ and functions $\phi:\mathbb{P}_{d}\longrightarrow V$ and $ \psi:\mathbb{P}_{d}\longrightarrow V$, we have the following properties:
	\begin{enumerate}[label=\roman*)]
		\item If $\psi(\sigma)=\sum\limits_{\pi\leq\sigma}\phi(\pi)$, then $\phi(\sigma)=\sum\limits_{\pi\leq\sigma}\mu(\pi,\sigma)\psi(\pi)$;
		\item If $\psi(\sigma)=\sum\limits_{\pi\geq\sigma}\phi(\pi)$, then $\phi(\sigma)=\sum\limits_{\pi\geq\sigma}\mu(\sigma,\pi)\psi(\pi)$;
		 \item Moreover, $\forall\sigma\neq\dot{0}$, $\sum\limits_{\dot{0}\leq\pi\leq\sigma}\mu(\pi,\sigma)=0$.
	\end{enumerate}

\end{prop}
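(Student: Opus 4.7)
The plan is to define $\mu$ as the classical Möbius function of the finite partition lattice $\mathbb{P}_d$ and to derive (i)--(iii) from the standard incidence-algebra identities that $\mu$ satisfies. Concretely, I set $\mu(\pi,\sigma)=0$ whenever $\pi\not\leq\sigma$, $\mu(\sigma,\sigma)=1$ for every $\sigma$, and for $\pi<\sigma$ recursively
\[
\mu(\pi,\sigma)\pl=\pl -\sum_{\pi\leq\tau<\sigma}\mu(\pi,\tau).
\]
An induction on the length of a maximal chain in $[\pi,\sigma]$ shows that this recursion terminates on the finite lattice $\mathbb{P}_d$ and produces integer values. The defining relation rearranges into
\[
\sum_{\pi\leq\tau\leq\sigma}\mu(\pi,\tau)\pl=\pl\delta_{\pi,\sigma}\pll\mbox{for all }\pi\leq\sigma,
\]
and the dual identity
\[
\sum_{\pi\leq\tau\leq\sigma}\mu(\tau,\sigma)\pl=\pl\delta_{\pi,\sigma}
\]
follows automatically, since $\mu$ is by construction the right inverse of the zeta function $\zeta(\pi,\sigma)=1_{\{\pi\leq\sigma\}}$ in the finite-dimensional incidence algebra of $\mathbb{P}_d$, and therefore also a left inverse.

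With these two identities in hand, (i) is a standard exchange-of-summation argument: substituting $\psi(\pi)=\sum_{\tau\leq\pi}\phi(\tau)$ gives
\[
\sum_{\pi\leq\sigma}\mu(\pi,\sigma)\psi(\pi)\pl=\pl\sum_{\tau\leq\sigma}\phi(\tau)\sum_{\tau\leq\pi\leq\sigma}\mu(\pi,\sigma)\pl=\pl\phi(\sigma),
\]
where the inner sum collapses to $\delta_{\tau,\sigma}$ by the dual identity above. Property (ii) is the symmetric calculation with sums running upward rather than downward, and uses the first identity in place of the second. Property (iii) is then the specialization $\tau=\dot{0}$ of the dual identity: since $\dot{0}\leq\pi$ holds automatically, the asserted sum equals $\delta_{\dot{0},\sigma}$, which vanishes for $\sigma\neq\dot{0}$.

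The main obstacle here is essentially notational rather than mathematical: this is Rota's classical Möbius-inversion theorem \cite{rota1964foundations} specialized to the partition lattice and packaged in the form of \cite{pisier2000inequality}. The only mildly non-automatic step is the passage from the one-sided recursion to the two one-sided identities above; this can be done either abstractly (a one-sided inverse in a finite-dimensional algebra is automatically two-sided) or by a direct induction on the cardinality of $[\pi,\sigma]$. If an explicit formula is wanted, Rota's product formula $\mu(\pi,\sigma)=(-1)^{\nu(\pi)-\nu(\sigma)}\prod_B(|B|-1)!$, with $B$ ranging over the blocks of $\sigma$ written as unions of blocks of $\pi$, may be cited, but it is not needed for the proof of (i)--(iii).
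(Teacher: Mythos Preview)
Your argument is correct and is precisely the classical Rota incidence-algebra proof. Note, however, that the paper does not actually prove this proposition: it is stated without proof as a result recalled from \cite{pisier2000inequality} and \cite{rota1964foundations}, so there is no ``paper's own proof'' to compare against. Your write-up supplies exactly the standard justification that the cited references contain, and nothing further is needed.
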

The next result provides precise formulas for the M\"{o}bius function $\mu$ in special cases.
\begin{theorem} \label{thm22}
	
	The M\"{o}bius function satisfies the following properties:
	\begin{enumerate}[label=\roman*)]
		\item $\mu(\dot{0},\dot{1})=(-1)^{d-1}(d-1)!$.
		\item$\mu(\dot{0},\pi)=\prod_{i=1}^{d}[(-1)^{i-1}(i-1)!]^{r_{i}(\pi)}$, and consequently,
		\item  $\sum_{\pi\in P_{d}}|\mu(\dot{0},\pi)|=d!  .$
	\end{enumerate}
\end{theorem}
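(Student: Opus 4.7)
The plan is to establish (i) by induction on $d$, then derive (ii) from a product decomposition of the interval $[\dot 0,\pi]$, and finally obtain (iii) by matching the resulting sum to the total number of permutations of $\{1,\ldots,d\}$. Both structural parts rest on two elementary interval isomorphisms inside $\mathbb{P}_d$ which I would record first: for any $\pi\in\mathbb{P}_d$ the interval $[\pi,\dot 1]$ is order-isomorphic to $\mathbb{P}_{\nu(\pi)}$ (a coarsening of $\pi$ is the same datum as a partition of its set of blocks), and $[\dot 0,\pi]$ is order-isomorphic to $\prod_{B\in\pi}\mathbb{P}_{|B|}$ (a refinement of $\pi$ is an independent choice of partition of each block).

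For (i), write $f_d:=\mu_d(\dot 0,\dot 1)$. The first interval isomorphism gives $\mu(\pi,\dot 1)=f_{\nu(\pi)}$ for every $\pi$. Applying Proposition \ref{prop}(iii) with $\sigma=\dot 1$ (valid for $d\ge 2$) and grouping $\pi$ by number of blocks yields the recursion
\[ 0 \pl = \pl \sum_{k=1}^{d} S(d,k)\,f_k, \]
where $S(d,k)$ denotes the Stirling number of the second kind. Substituting the inductive hypothesis $f_k=(-1)^{k-1}(k-1)!$ for $k<d$ reduces (i) to the identity $\sum_{k=1}^{d}S(d,k)(-1)^{k-1}(k-1)!=0$ for $d\ge 2$. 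This can be read off from exponential generating functions: multiplying the standard formula $\sum_{d\ge k}S(d,k)\,t^d/d!=(e^t-1)^k/k!$ by $(-1)^{k-1}(k-1)!$ and summing over $k$ gives $\log(1+(e^t-1))=t$, whose $t^d/d!$ coefficient vanishes for $d\ge 2$.

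For (ii), the product isomorphism $[\dot 0,\pi]\cong\prod_{B\in\pi}\mathbb{P}_{|B|}$ combined with multiplicativity of the M\"obius function across Cartesian products of locally finite posets yields $\mu(\dot 0,\pi)=\prod_{B\in\pi}f_{|B|}$. Regrouping the product by block size and inserting the formula from (i) gives the stated expression.

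For (iii), the classical count furnishes $d!/\prod_i(i!)^{r_i}r_i!$ partitions of type $(r_i)$; summing $|\mu(\dot 0,\pi)|=\prod_i[(i-1)!]^{r_i}$ over $\pi$ and using $(i-1)!/i!=1/i$ yields
\[ \sum_{\pi\in\mathbb{P}_d}|\mu(\dot 0,\pi)| \pl = \pl \sum_{(r_i):\,\sum i r_i=d}\frac{d!}{\prod_i i^{r_i}r_i!}, \]
which is exactly the sum over cycle types of the number of permutations of $\{1,\ldots,d\}$ with that cycle type, hence equals $d!$. I expect the only genuine technical step to be the Stirling identity that powers (i); the rest is bookkeeping once the two interval isomorphisms are in hand.
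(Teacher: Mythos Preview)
Your argument is correct. Note, however, that the paper does not supply its own proof of Theorem~\ref{thm22}: it is stated as a known result, with a reference back to Pisier's paper \cite{pisier2000inequality} (and implicitly to Rota \cite{rota1964foundations}). So there is no ``paper proof'' to compare against.

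That said, what you have written is precisely the classical derivation. The two interval isomorphisms $[\pi,\dot 1]\cong\mathbb{P}_{\nu(\pi)}$ and $[\dot 0,\pi]\cong\prod_{B\in\pi}\mathbb{P}_{|B|}$, together with the fact that the M\"obius function of a locally finite poset depends only on the isomorphism type of the interval and is multiplicative over Cartesian products, are exactly the ingredients Rota uses. Your recursion $\sum_{k=1}^d S(d,k)f_k=0$ and its verification via the exponential generating function $\sum_{k\ge 1}(-1)^{k-1}(e^t-1)^k/k=\log e^t=t$ are clean and correct; the cycle-type count for (iii) is the standard bijective argument. Nothing is missing.
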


 If $\sigma$ is a partition of $\{1,...,d\}$, then there exists a coordinate function $f:\{1,...,d\} \rightarrow \{1,...,\nu(\sigma)\}$ such that $f^{-1}(t)=A_t$ where each $A_{t}$ represents a block in our partition. Note that this coordinate function isn't unique. For every partition $\sigma$ we can fix an enumeration of the blocks $f:\{1,2,...,d\}\longrightarrow \{1,2,...,|\sigma|\}$ where $\sigma$:=$\langle j_{1},j_{2},...,j_{d}\rangle$.
This means $j_r=j_s$ if and only if ${r,s}\in A_{r,s}$ where $A_{r,s}$ is a block in $\sigma=\langle  j_{1},j_{2},...,j_{d}\rangle$. Using this notation we define the restricted and full partition for elements from an algebra.

\begin{defi}
	Let $\mathscr{A}$ be an algebra and $x_{j_{i}}^{i}\in \mathscr{A}$. The restricted partition is defined by: 
	$$\langle\sigma\rangle=\sum_{\langle j_1,j_2,...,j_d\rangle=\sigma}x_{j_1}^{1}...x_{j_d}^{d}.$$
	The full partition with elements $x_{j_i}$ is given by:
	$$[\sigma]=\sum_{\pi\geq\sigma}\langle\pi\rangle .$$ 
\end{defi}	
	
	The restricted and full partitions, which are denoted as $\langle\sigma\rangle$ and $[\sigma]$, respectively, give expressions for the elements in the given $B(H)$ according to the algebraic combinatorial partition $\sigma $. 
In order to understand the difference between the definition of restricted partition and full partition, consider the following example.
\begin{exam}
Let both the numbers of total samples and chosen samples be 3 ($n=d=3$). Then for the full partition $[1~2,3]$ , with the assumption that $x_{j}^{i}=x_{j}$ we have
\begin{eqnarray*}
 [1~2,3]&=&(\sum x_i ^2)(\sum x_i)\\
 &=&\langle1~2,3\rangle+\langle1~2~3\rangle\\
 &=&\sum_{i_{1}=i_{2}\neq i_{3}} x_{i_{1}}^{2}x_{i_{3}} + \sum_{i_{1}=i_{2}=i_{3}} x_{i_{1}} ^3.\\
 \end{eqnarray*}
 Whereas the restricted partition $\langle1~2,3\rangle$ is defined as
 $\langle1~2,3\rangle = \sum_{i_{1}=i_{2}\neq i_{3}} x_{i_{1}}^{2}x_{i_{3}}.$

\end{exam}	

	 We reformulate Pisier's  M\"{o}bius inversion formula in our context. 
	
	\begin{prop}\label{pr2.5}
	Let $x_{j}^{k}\in \mathscr{A}$ as above. Then we have	
    \begin{equation}
	\langle\pi \rangle=\sum_{\nu\geq\pi}\mu(\pi,\nu)[\nu],~where~ 	[\pi]=\sum_{\nu\geq\pi}\langle \nu \rangle, 
	\end{equation}

	\begin{equation}
	\langle\pi \rangle=\sum_{\nu\leq\pi}\mu(\pi,\nu)[\nu],~where~ 	[\pi]=\sum_{\nu\leq\pi}\langle \nu \rangle.
	\end{equation}

	Moreover, we have  
	\begin{equation} \label{eq2.3}
	\langle\dot{0} \rangle=[\dot{0}]+\sum_{\dot{0}\lneqq \nu \leq \dot{1}}\mu(\dot{0},\nu)[\nu].
	\end{equation}
	\end{prop}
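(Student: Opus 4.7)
The plan is to deduce all three displayed identities as direct applications of Proposition \ref{prop} (Pisier's M\"obius inversion formula) with the vector space $V = \mathscr{A}$ and the functions $\phi(\sigma) := \langle\sigma\rangle$ and $\psi(\sigma) := [\sigma]$. In other words, I would regard $\langle\cdot\rangle$ and $[\cdot]$ as $\mathscr{A}$-valued functions on the partition lattice $\mathbb{P}_d$ and let the abstract Rota/Pisier machinery do the work.

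For (2.1), the defining relation $[\sigma] = \sum_{\nu \geq \sigma}\langle\nu\rangle$ is precisely the hypothesis $\psi(\sigma) = \sum_{\pi \geq \sigma}\phi(\pi)$ appearing in Proposition \ref{prop}(ii). Inversion therefore gives $\phi(\sigma) = \sum_{\nu \geq \sigma}\mu(\sigma,\nu)\psi(\nu)$, i.e.\ $\langle\sigma\rangle = \sum_{\nu \geq \sigma}\mu(\sigma,\nu)[\nu]$, which is exactly (2.1) after relabelling $\sigma \mapsto \pi$. I would handle (2.2) identically, but now reading $[\pi] = \sum_{\nu \leq \pi}\langle\nu\rangle$ as the hypothesis of part (i) of the proposition; the inversion formula in (i) then produces (2.2).

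For (2.3), I would specialize (2.1) to $\pi = \dot{0}$, in which case $\nu \geq \dot{0}$ ranges over all of $\mathbb{P}_d$. Separating off the term $\nu = \dot{0}$ and using $\mu(\dot{0},\dot{0}) = 1$ gives the claimed formula. The normalization $\mu(\dot{0},\dot{0}) = 1$ is the standard convention for the M\"obius function of a poset; alternatively, one can verify it in one line by applying Proposition \ref{prop}(i) to the indicator $\phi = \delta_{\dot{0}}$ on a one-dimensional $V$ (then $\psi \equiv 1$) and reading off the diagonal coefficient.

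There is no real obstacle here --- the content of the proposition is just a reformulation of Proposition \ref{prop} in the bracket notation for restricted and full partitions. The only subtlety worth flagging in the write-up is that (2.1) and (2.2) use \emph{opposite} conventions for what the symbol $[\pi]$ sums over (coarsenings $\nu \geq \pi$ versus refinements $\nu \leq \pi$), so each identity must be paired with the matching half of Proposition \ref{prop}; both directions are immediate from the general inversion principle and no further combinatorics is needed.
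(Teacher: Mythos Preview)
Your proposal is correct and aligns with the paper, which gives no separate proof and simply introduces Proposition~\ref{pr2.5} as a reformulation of Proposition~\ref{prop} in the bracket notation; applying parts~(i)--(ii) with $\phi=\langle\cdot\rangle$, $\psi=[\cdot]$ and then specializing to $\pi=\dot 0$ (using $\mu(\dot 0,\dot 0)=1$) is exactly the intended argument. One small caveat: applying part~(i) literally yields $\langle\pi\rangle=\sum_{\nu\le\pi}\mu(\nu,\pi)[\nu]$ rather than $\mu(\pi,\nu)$ as printed in~(2.2), so the argument order there appears to be a typo in the paper; since only~(2.1) and~(2.3) are used downstream this is harmless, but your write-up should not assert that part~(i) reproduces~(2.2) verbatim.
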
	



	
 In \cite{pisier2000inequality}, in order to separate different partition blocks into disjoint subspaces, Pisier uses a trick to embed operators $x_{ik}\in B(H)$ into $B(K\otimes H)$ (for another Hilbert space $K$). Our first goal is to modify Pisier's trick by using matrix units.\\
 
 Consider first the trivial partition that has only one block $[1,2,\cdots,d]$. We can write 
 \begin{eqnarray*}
 	\dot{1}=[1,2,\cdots,d]&=&\sum x_{i_1}^1x_{i_2}^2\cdots x_{i_d}^d \\
 	&= &(\sum e_{1i_1} \otimes x_{i_1}^1  )\times (\sum e_{i_2i_2} \otimes x_{i_2}^2 )\times \cdots\\
 	&\times& (\sum e_{i_{d-1}i_{d-1}} \otimes  x_{i_{d-1}}^{d-1})\times (\sum e_{i_d1} \otimes  x_{i_d}^d ).
 \end{eqnarray*}
 
 Now if we have 6 elements and our partition $\sigma$ has two crossing blocks, one containing $\{1,3,4,6\}$ and the other containing $\{2,5\}$, then the full partition of $\sigma$ will be of the form:
 \begin{equation*}
 [\sigma]= \sum_{\substack{i_1=i_3=i_4=i_6\\ i_2=i_5}} x_{i_1}x_{i_2}x_{i_3}x_{i_4}x_{i_5}x_{i_6}.
 \end{equation*}
 We rewrite these elements into a tensor form, as follows:
 \begin{eqnarray*}
 	Z_{i_1}& =& e_{1i_1}\otimes 1 \otimes x_{i_1}~,~~~~~~~~Z_{i_2} = 1 \otimes  e_{1i_2} \otimes x_{i_2} \\
 	Z_{i_3}& =& e_{i_3i_3}\otimes 1 \otimes x_{i_3},~~~~~~~~Z_{i_4} = e_{i_4i_4}\otimes 1 \otimes x_{i_4} \\
 	Z_{i_5}& =& 1\otimes e_{i_51} \otimes x_{i_5}~,  ~~~~~~~~Z_{i_6} = e_{i_6 1}\otimes 1 \otimes x_{i_6} .  \\
 \end{eqnarray*}
 With this new notation, we get 
 \begin{eqnarray*}
 	[\sigma]&=& \sum_{\substack{i_1=i_3=i_4=i_6\\ i_2=i_5}}x_{i_1}x_{i_2}x_{i_3}x_{i_4}x_{i_5}x_{i_6} \\
 	&=& \sum_{i_1,i_2,i_3,i_4,i_5,i_6}Z_{i_1}Z_{i_2}Z_{i_3}Z_{i_4}Z_{i_5}Z_{i_6} \\
 	&=& \prod_{j=1}^{6}  (\sum_{i_j} Z_{i_j}) = \prod_{j=1}^{6} Z_j , ~~(Z_j :=\sum_{i_j} Z_{i_j}  ).\\\
 \end{eqnarray*} 
 
 In a more general setting, assume $\sigma$ has more than one block. Denote $A_{1}$,\ldots,$A_{|\sigma|}$ as the blocks of the partition $\sigma$ with cardinality larger than one. 
 
 Then we define
 \begin{equation}\label{eq1}
 Z_{j_k}^k \in B(H)^{\otimes^{|\sigma|} }\otimes B(H)
 \end{equation}
 as follows:\\
 \begin{align*}
 \forall  k \in A_{1} ,~& Z_{j_k}^k= t_{A_1}(j_k) \otimes 1 \otimes\cdots \otimes x_{j_k}^k\\
 \forall  k \in A_{2} ,~& Z_{j_k}^k=1\otimes t_{A_2}(j_k)\otimes1 \cdots\otimes x_{j_k}^k\\
\forall   k \in  A_{|\sigma|} ,~& Z_{j_k}^k= 1\otimes\cdots \otimes t_{A_{|\sigma|}}(j_k) \otimes x_{j_k}^k ,
 \end{align*}
 where 
 \begin{displaymath}
 t_{A_m(j_k)}=\left\{
 \begin{array}{cc}
 e_{1j_k} & j_k=\min A_m\\
 e_{j_kj_k} &  \text{otherwise}\\
 e_{j_k1} & j_k=\max A_m .\\
 \end{array} \right.
 \end{displaymath}
 Here, $\min A_m$ means the smallest index number and $\max A_m$ means the largest index number in the partition $A_m$.
 Finally, if $k$ belongs to singleton block of the partition $\sigma$, then we set
 \begin{equation*}
 Z_{j_k}^k=1\otimes \cdots \otimes 1\otimes x_{j_k}^k.
 \end{equation*}
 To sum up, the method places each element  into larger spaces, which will allow us to interchange the summation and multiplication as in the above example  and the following lemma.
 \begin{lemma}\label{1}
For an arbitrary partition $\sigma$  for d elements, we have 
$$[\sigma]=\sum\limits_{i_{1},...,i_{d}}Z_{i_{1}}^{1}...Z_{i_{d}}^{d}.$$
\end{lemma}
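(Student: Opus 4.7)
The plan is to compute the product $\prod_{k=1}^{d} Z_{i_k}^{k}$ tensor-slot by tensor-slot, exploiting that the matrix-unit factors in each block's slot telescope via the identity $e_{ab}e_{cd}=\delta_{bc}e_{ad}$. First I would fix a non-singleton block $A_m=\{k_1<k_2<\cdots<k_s\}$ of $\sigma$ and look only at the $m$-th tensor slot of the product $Z_{i_1}^1 Z_{i_2}^2 \cdots Z_{i_d}^d$. By construction, every factor $Z_{i_k}^k$ with $k\notin A_m$ contributes a $1$ in slot $m$, so only the factors indexed by $k\in A_m$ are visible there, and they appear in the natural order $k_1,k_2,\ldots,k_s$ inherited from $\{1,\ldots,d\}$.

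Next I would carry out the matrix-unit calculation in slot $m$. Using the definition of $t_{A_m}$, the product in that slot is
\[
e_{1,i_{k_1}}\,e_{i_{k_2},i_{k_2}}\,e_{i_{k_3},i_{k_3}}\cdots e_{i_{k_{s-1}},i_{k_{s-1}}}\,e_{i_{k_s},1},
\]
which by the matrix-unit relation equals $e_{11}$ when $i_{k_1}=i_{k_2}=\cdots=i_{k_s}$ and vanishes otherwise (the edge case $s=2$ reduces to $e_{1,i_{k_1}}e_{i_{k_2},1}$, and behaves the same way). In parallel, the last tensor slot simply accumulates the product $x_{i_1}^1 x_{i_2}^2\cdots x_{i_d}^d$, since every $Z_{i_k}^k$ contributes $x_{i_k}^k$ there. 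Singleton blocks contribute only $1$'s in all matrix slots and therefore impose no constraint on the corresponding index.

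Summing over all $(i_1,\ldots,i_d)$ and assembling the tensor slots then gives
\[
\sum_{i_1,\ldots,i_d} Z_{i_1}^1\cdots Z_{i_d}^d \;=\; e_{11}^{\otimes |\sigma|}\,\otimes\, \sum_{\substack{i_k=i_{k'}\text{ whenever}\\ k,k'\text{ in the same block of }\sigma}} x_{i_1}^1\cdots x_{i_d}^d.
\]
The rightmost sum is exactly $\sum_{\pi\ge\sigma}\langle\pi\rangle=[\sigma]$, since the constraint that indices agree on every block of $\sigma$ is the defining property of the union $\bigcup_{\pi\ge\sigma}\{\langle j_1,\ldots,j_d\rangle=\pi\}$. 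Under the canonical embedding $x\mapsto e_{11}^{\otimes|\sigma|}\otimes x$ (the identification implicit in \eqref{eq1} and the $d=6$ example above), this yields the claimed identity $[\sigma]=\sum_{i_1,\ldots,i_d} Z_{i_1}^1\cdots Z_{i_d}^d$.

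The main obstacle is essentially bookkeeping: one has to handle the degenerate cases (blocks of size two, where min and max coincide with the two endpoints; and singleton blocks, which lie outside the tensor indexing $A_1,\ldots,A_{|\sigma|}$) and be careful that the linear order on each block $A_m$ matches the left-to-right order in which factors appear in the product, so that the matrix-unit telescoping really does produce $e_{11}$ rather than a mismatched $e_{ab}$. Once that ordering is verified, the tensor-slot decomposition reduces the lemma to the elementary matrix-unit identity.
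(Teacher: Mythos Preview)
Your argument is correct and follows exactly the approach the paper has in mind; the paper's own proof is the single line ``this immediately follows from $Z_{i_j}^{i}\cdot Z_{i_k}^{k}=0$ if $i_j\neq i_k$,'' and your tensor-slot telescoping computation is precisely what unpacks that remark. Your explicit handling of the embedding $x\mapsto e_{11}^{\otimes|\sigma|}\otimes x$ is a welcome clarification of an identification the paper leaves implicit.
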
 
Indeed, this immediately follows from 
	\raggedbottom
 \begin{equation*}
 Z_{i_j}^i \cdot Z_{i_k}^k =0, \text{if }  i_j\neq i_k.
 \end{equation*} 

 Follow Pisier's result in \cite{pisier2000inequality}; we deduce the following norm estimate. 
 
 \begin{theorem}\label{lem32}
 	For an arbitrary partition $\sigma$  for d elements, we have
 	$$\| [\sigma] \|_{B(H)}  \leq \prod\limits_{k=1 }^d \Big (\| \sum\limits_{j_k} Z_{j_k}^k\| \cdot 1_{ \sigma_s}(k) + \| \sum\limits_{j_k} Z_{j_k}^k  \|\cdot 1_{  \sigma_{ns} }(k) \Big). $$ 
    Moreover, 
 	$\| [\sigma] \|_{B(H)}  \leq \prod_{k \in \sigma_{s} }  \| \sum\limits_{j_k} Z_{j_k} \|  \times \prod_{k\in \sigma_{ns}}  \||(Z_{j_{k}})|\|,$\\
 	where $\||(Z_{j_{k}})|\|=\max \{\|\sum Z_{j_{k_1}}Z_{j_{k_1}}^*\|^{\frac{1}{2}},~ \|\sum Z_{j_{k_p}}^*Z_{j_{k_p}}\|^{\frac{1}{2}}, \sup_{j_k} \|Z_{j_k}\|\}.$
 
 	Here $\sigma_{s} $ means the set of singletons in the partition $\sigma $, and $\sigma_{ns} $ means the set of non-singleton elements in the partition $\sigma $. The functions $1_{ \sigma_{ns} }(k),  1_{\sigma_{s}}(k)$ represent the characteristic functions, i.e.
 	\begin{displaymath}
 	1_{ \sigma_{ns}  }(k)=\left\{
 	\begin{array}{cc}
 	1 & k \in \sigma_{ns}\\
 	0 &  \text{otherwise}\\
 	\end{array} \right. ,~~
 	1_{ \sigma_{s}  }(k)=\left\{
 	\begin{array}{cc}
 	1 & k \in \sigma_s\\
 	0 &  \text{otherwise}\\
 	\end{array} \right.
 	\end{displaymath}
 	
 \end{theorem}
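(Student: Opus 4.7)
The plan is to exploit the tensor-factor structure of the $Z^k_{j_k}$ so that Lemma \ref{1} turns $[\sigma]$ into a product of $d$ single-index sums, and then to bound each of those sums by a standard row/column/diagonal norm identity.

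First I would establish the factorization
\[
[\sigma]\;=\;\sum_{i_1,\ldots,i_d}Z^1_{i_1}\cdots Z^d_{i_d}\;=\;\prod_{k=1}^d\Bigl(\sum_{j_k}Z^k_{j_k}\Bigr).
\]
This rests on the matrix-unit orthogonality $Z^j_{i_j}\,Z^k_{i_k}=0$ whenever $j,k$ lie in the same non-singleton block but $i_j\neq i_k$. Indeed, for a non-singleton block $A_m=\{k_1<\cdots<k_p\}$, the matrix units in tensor factor $m$ multiply as
\[
e_{1,i_{k_1}}\,e_{i_{k_2},i_{k_2}}\cdots e_{i_{k_{p-1}},i_{k_{p-1}}}\,e_{i_{k_p},1},
\]
which telescopes to $e_{1,1}$ precisely when $i_{k_1}=\cdots=i_{k_p}$ and vanishes otherwise; and because distinct blocks occupy distinct tensor factors, the partial sums over different blocks slide past one another freely. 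Applying submultiplicativity of the operator norm (and using $1_{\sigma_s}(k)+1_{\sigma_{ns}}(k)=1$) then yields the first inequality of the theorem at once.

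Next I would analyze each factor $\sum_{j_k}Z^k_{j_k}$. For $k$ in a singleton block, $Z^k_{j_k}=1\otimes\cdots\otimes 1\otimes x^k_{j_k}$, so the factor equals $\|\sum_{j_k}x^k_{j_k}\|$ and contributes to the first product in the "Moreover" bound. For $k$ in a non-singleton block $A_m$ with elements $k_1<\cdots<k_p$, the tensor entry in position $m$ is a row $e_{1,j_k}$ when $k=k_1$, a column $e_{j_k,1}$ when $k=k_p$, and a diagonal $e_{j_k,j_k}$ in between. The classical identities
\[
\Bigl\|\sum_j e_{1,j}\otimes y_j\Bigr\|=\Bigl\|\sum_j y_jy_j^*\Bigr\|^{1/2},\;\;
\Bigl\|\sum_j e_{j,1}\otimes y_j\Bigr\|=\Bigl\|\sum_j y_j^*y_j\Bigr\|^{1/2},\;\;
\Bigl\|\sum_j e_{jj}\otimes y_j\Bigr\|=\sup_j\|y_j\|
\]
bound each non-singleton factor by $\||(Z_{j_k})|\|$, which was defined as exactly the maximum of these three quantities. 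This gives the "Moreover" bound. The only genuinely delicate point in the whole argument is the telescoping computation in a single non-singleton block --- making sure that the roles of $\min A_m$, $\max A_m$, and interior elements of $A_m$ line up correctly with the row/column/diagonal tensor factors of Pisier's construction \cite{pisier2000inequality}. Once this bookkeeping is pinned down, the rest is just submultiplicativity and the noncommutative row/column norm identities.
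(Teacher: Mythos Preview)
Your proposal is correct and follows essentially the same route as the paper's own proof: invoke Lemma~\ref{1} to rewrite $[\sigma]$ as the product $\prod_k\bigl(\sum_{j_k}Z^k_{j_k}\bigr)$, apply submultiplicativity for the first bound, and then for the ``Moreover'' estimate split each non-singleton block $A_m=\{k_1<\cdots<k_p\}$ into its first, last, and interior positions and use the row/column/diagonal identities for the matrix-unit tensor factors. The only cosmetic difference is that the paper writes out the three-case split $1_{\min A_m}+1_{\max A_m}+1_{\text{mid }A_m}$ explicitly and bounds each term separately before passing to the maximum, whereas you invoke the three norm identities directly; the content is identical.
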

 \begin{proof} Taking the norm for the full partition, we have 
 	\begin{align}\nonumber
 		&\|[\sigma]\|=\| \sum\limits_{\pi \geq \sigma} \langle\pi\rangle \|= \| \sum\limits_{ \langle  j_{1},\cdots,j_{d}\rangle  \geq \sigma  } x_{j_{1}}^1\cdots x_{j_{d}}^d\| \\\label{en3}
 		&=\|\sum \limits_{j_1,j_2,\ldots,j_d} Z_{j_{1}}^1\cdots Z_{j_{d}}^d\| \\\label{en5}
 		&=\|\prod_{k \in \sigma_{s} }   \sum\limits_{j_k} Z_{j_k}^k  \cdot \prod_{k \in \sigma_{ns} }  \sum\limits_{j_k} Z_{j_k}^k  \| \\\nonumber
 		&\leq \|  \prod_{k \in \sigma_{s} }   \sum\limits_{j_k} Z_{j_k}^k \|  \cdot \| \prod_{k \in \sigma_{ns} }  \sum\limits_{j_k} Z_{j_k}^k \|  \\\nonumber
 		&\leq \prod_{k \in \sigma_{s} }  \| \sum\limits_{j_k} Z_{j_k}^k \|  \cdot \prod_{k \in \sigma_{ns} } \| \sum\limits_{j_k} Z_{j_k}^k  \|. \nonumber
 	\end{align}
 	The equality \eqref{en3} comes from Lemma \ref{1}. 
 	The equality \eqref{en5} follows from the definition of $Z_{j_k}^k$, which means it allows us to perform summation first and then multiplication.\vspace{0.1in}
 	 Next, 
\raggedbottom
 	\begin{align*}
 	& \|[\sigma]\|_{B(H)}\leq \prod_{k \in \sigma_{s} }  \| \sum\limits_{j_k} Z_{j_k} \|  \cdot \prod_{k \in \sigma_{ns} } \| \sum\limits_{j_k} Z_{j_k}  \|\\
 	& \leq  \prod_{k \in \sigma_{s} }  \| \sum\limits_{j_k} Z_{j_k} \| \times \prod_{k\in A_m \subset \sigma_{ns}    }     \| \sum\limits_{j_k} Z_{j_k}  \|\cdot (1_{\min A_m} + 1_{\max A_m } + 1_{\text{mid}  ~A_m })\\
 	&\leq \prod_{k \in \sigma_{s} }  \| \sum\limits_{j_k} Z_{j_k} \|  \times  \\
 	&  \prod_{k\in A_m \subset \sigma_{ns}    }    \Big(  \| \sum\limits_{j_k}  Z_{j_k}  \|\cdot 1_{\min A_m} +  \| \sum\limits_{j_k} Z_{j_k}  \|\cdot 1_{\max A_m } + \| \sum\limits_{j_k} Z_{j_k}  \|\cdot 1_{\text{mid} ~A_m }\Big) \\
    \end{align*}
   \begin{align*}
 	&\leq \prod_{k \in \sigma_{s} }  \| \sum\limits_{j_k} Z_{j_k} \| \times   \\
 	&  \prod_{k\in A_m \subset \sigma_{ns}    }    \Big(  \| \sum\limits_{j_k}  Z_{j_k} Z_{j_k}^* \|^{\frac{1}{2}}\cdot 1_{\min A_m} +  \| \sum\limits_{j_k} Z_{j_k}^*Z_{j_k}  \|^{\frac{1}{2}}\cdot 1_{\max A_m } + \sup_{j_k} \| Z_{j_k}  \|\cdot 1_{\text{mid} ~A_m }\Big) \\
 	&\leq \prod_{k \in \sigma_{s} }  \| \sum\limits_{j_k} Z_{j_k} \|  \times \prod_{k\in \sigma_{ns}}  \||(Z_{j_{k}})|\|, 
 	\end{align*}	
 	where
 	$\||(Z_{j_{k}})|\|=\max \{\|\sum Z_{j_{k_1}}Z_{j_{k_1}}^*\|^{\frac{1}{2}},~ \|\sum Z_{j_{k_p}}^*Z_{j_{k_p}}\|^{\frac{1}{2}}, \sup_{j_k} \|Z_{j_k}\|\}.$
 \end{proof}
 
  The next corollary states the norm estimate in $B(H)$ rather than in $B(K \otimes H)$. For simplicity we replace $x_{i_{k}}^{k}$ by $x_{i_{k}}$.
  
 \begin{cor} \label{th3.3}
 	If $\sigma $ is a partition and $x_{j_k}$ is a self-adjoint operator for arbitrary $k\in \{1,...,d\}$, then 
 	$$\| [\sigma] \|_{B(H)} \leq \prod_{k \in \sigma_{s} }  \| \sum x_{j_k}   \|\cdot \prod_{k \in \sigma_{ns} } \| \sum x_{j_k}^2   \|^{\frac{1}{2}}$$

 \end{cor}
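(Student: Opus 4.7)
The plan is to apply Theorem \ref{lem32} and then evaluate each of the tensor–norm factors $\|\sum_{j_k}Z_{j_k}\|$ and $\vertiii{(Z_{j_k})}$ by exploiting the explicit matrix–unit definition of $Z_{j_k}^k$ given in \eqref{eq1}. The self-adjointness of $x_{j_k}$ will play its role only when bounding the ``middle'' terms by $\|\sum x_{j_k}^2\|^{1/2}$.

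First I would dispose of the singleton positions. For $k\in\sigma_{s}$ the definition gives $Z_{j_k}^k=1\otimes\cdots\otimes 1\otimes x_{j_k}$, so $\sum_{j_k}Z_{j_k}^k=1\otimes\cdots\otimes 1\otimes\sum_{j_k}x_{j_k}$ and hence $\|\sum_{j_k}Z_{j_k}^k\|=\|\sum_{j_k}x_{j_k}\|$, which is already the desired factor. Next, for a non-singleton block $A_m$ with $k\in A_m$, I would split into the three cases according to the matrix unit $t_{A_m}(j_k)$. When $k=\min A_m$, the tensor leg $t_{A_m}(j_k)=e_{1 j_k}$ gives $Z_{j_k}Z_{j_k}^*=e_{11}\otimes 1\otimes\cdots\otimes x_{j_k}^2$ (with the $1$'s in the other copies of $B(H)$); summing over $j_k$ yields $e_{11}\otimes 1\otimes\cdots\otimes\sum_{j_k}x_{j_k}^2$, whose norm is exactly $\|\sum x_{j_k}^2\|$, so $\|\sum Z_{j_k}Z_{j_k}^*\|^{1/2}=\|\sum x_{j_k}^2\|^{1/2}$. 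The case $k=\max A_m$ is symmetric using $Z_{j_k}^*Z_{j_k}$.

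The middle case requires self-adjointness. Here $t_{A_m}(j_k)=e_{j_k j_k}$, so $\|Z_{j_k}\|=\|x_{j_k}\|$ and therefore $\sup_{j_k}\|Z_{j_k}\|=\sup_{j_k}\|x_{j_k}\|$. Since each $x_{j_k}$ is self-adjoint, $x_{j_k}^2\geq 0$, and $\sum_{j_k}x_{j_k}^2\geq x_{j_{k_0}}^2$ for every fixed $j_{k_0}$; taking norms gives $\|x_{j_{k_0}}\|^2=\|x_{j_{k_0}}^2\|\leq\|\sum x_{j_k}^2\|$, i.e.\ $\sup_{j_k}\|x_{j_k}\|\leq\|\sum x_{j_k}^2\|^{1/2}$. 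Consequently all three quantities inside $\vertiii{(Z_{j_k})}$ are bounded by $\|\sum x_{j_k}^2\|^{1/2}$, so the same bound holds for their maximum.

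Finally I would assemble the estimates: substituting into Theorem \ref{lem32} bounds each singleton factor by $\|\sum x_{j_k}\|$ and each non-singleton factor by $\|\sum x_{j_k}^2\|^{1/2}$, which is precisely the claim. The only mildly delicate point, and the part I would double-check most carefully, is the bookkeeping between the different positions of $k$ inside a non-singleton block (min, max, middle) and verifying that the matrix-unit cancellations behave as asserted in the three tensor legs of $B(H)^{\otimes|\sigma|}\otimes B(H)$; once that is in place the estimate is mechanical.
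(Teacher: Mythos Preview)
Your proposal is correct and follows the same route as the paper's proof: apply Theorem~\ref{lem32}, identify the singleton factors as $\|\sum x_{j_k}\|$, and for non-singleton blocks compute the min/max/middle cases via the matrix-unit structure to bound $\vertiii{(Z_{j_k})}$ by $\|\sum x_{j_k}^2\|^{1/2}$. One small remark: self-adjointness is actually also used in the min and max cases to rewrite $x_{j_k}x_{j_k}^*$ and $x_{j_k}^*x_{j_k}$ as $x_{j_k}^2$, not only in the middle case as you stated.
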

 \begin{proof}
 
 	We need to discuss two cases:\\
 	(i) For $ k  \in \sigma_{s}$,
 	$\| \sum\limits_{j} Z_{j_k} \| 
 	=\| \sum 1\otimes \cdots \otimes x_{j_k} \|=\| 1\otimes \cdots \otimes \sum x_{j_k}\| 
 	=\|\sum x_{j_k}\|. $\\
 	(ii) For $A_m \in \sigma_{ns}$,
 	\begin{align}
 	\|\sum Z_{j_{k_1}}Z_{j_{k_1}}^*\|^{\frac{1}{2}}\nonumber
 	&=\|\sum [1\otimes \cdots \otimes e_{1j_{k_1}}\otimes \cdots \otimes x_{j_{k_1}}  ]  \cdot [1\otimes \cdots \otimes e_{j_{k_1}1} \otimes \cdots \otimes x_{j_{k_1}}^*  ]     \| ^{\frac{1}{2}} \\\nonumber
 	&=\| \sum 1\otimes \cdots \otimes e_{11} \otimes \cdots \otimes x_{j_{k_1}}x_{j_{k_1}}^*\| ^{\frac{1}{2}}\\
 	&=\| 1\otimes \cdots \otimes \sum x_{j_{k_1}}x_{j_{k_1}}^* \|^{\frac{1}{2}}=\|\sum x_{j_{k_1}}x_{j_{k_1}}^* \|^{\frac{1}{2}}=\|\sum x_{j_{k_1}}^2\|^{\frac{1}{2}}.\label{in3.3}
 	\end{align}
 	and
 	\begin{align}
 	\|\sum Z_{j_{k_p}}^*Z_{j_{k_p}}\|^{\frac{1}{2}} \nonumber
 	&=\|\sum [1\otimes \cdots \otimes e_{1j_{k_p}}\otimes \cdots \otimes x_{j_{k_p}}^*  ]  \cdot [1\otimes \cdots \otimes e_{j_{k_p}1} \otimes \cdots \otimes x_{j_{k_p}}  ]     \|^{\frac{1}{2}}  \\\nonumber
 	&=\| \sum 1\otimes \cdots \otimes e_{11} \otimes \cdots \otimes x_{j_{k_p}}^*x_{j_{k_p}}\|^{\frac{1}{2}}
 	=\| 1\otimes \cdots \otimes \sum x_{j_{k_p}}^*x_{j_{k_p}} \| ^{\frac{1}{2}}\\
 	&=\|\sum x_{j_{k_p}}^*x_{j_{k_p}} \|^{\frac{1}{2}}=\|\sum x_{j_{k_p}}^2\|^{\frac{1}{2}}. \\\nonumber
 	\end{align}
 	For the middle term, we have
 	\begin{align}\nonumber
 	\sup \limits_{k\in \{k_{2} ,\ldots,k_{p-1} \} } \sup\limits_{j_k} \|Z_{j_k}\|&=\sup \limits_{k\in \{k_{2} ,\ldots,k_{p-1} \} }  \sup\limits_{j_k} \|Z_{j_k}^*Z_{j_k}\|^{\frac{1}{2}}=\sup \limits_{k\in \{k_{2} ,\ldots,k_{p-1} \} } \sup\limits_{j_k} \|x_{j_k}^*x_{j_k}\|^{\frac{1}{2}} \\
 	&\leq \sup \limits_{k\in A_m } \| \sum x_{j_k}^2\|^{\frac{1}{2}}.
 	\end{align} 
 	Combining (i) and (ii) finishes the proof.
 \end{proof}

\section{AGM inequality for the norm and for the order}
 In this section we prove the AGM inequality for the norm and for the order. We need the following lemma which handles positive or self-adjoint operators $\{x_{i_{k}}\}$ in a C*-algebra $\mathscr{A}.$
\begin{lemma} \label{lem34}

	\begin{enumerate}
			\item[(i)]If $x_{j_k} \geq 0$, then $\|\sum x_{j_k}^{2}\|^{\frac{1}{2}} \leq \|\sum x_{j_k}\|.$\\
			\item[(ii)]  If $x_{j_k}$ are self-adjoint, then $\|\sum x_{j_k}^{2}\|^{\frac{1}{2}}=\|(\sum x_{j_k}^{2} )^{\frac{1}{2}}\| .$
	\end{enumerate}
	\end{lemma}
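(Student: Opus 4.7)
The plan is to dispatch both parts with standard C*-algebra facts; neither requires any of the partition machinery of the previous section. For part (i), I would set $S := \sum_j x_{j_k}$ (with $k$ fixed, summation over $j$). Positivity of each summand gives
\[
S - x_{j_k} \pl = \pl \sum_{j'\neq j} x_{j'_k} \pl \geq \pl 0,
\]
so $x_{j_k} \leq S$ in the operator order and in particular $\|x_{j_k}\| \leq \|S\|$ for every $j$. For any positive element $x$ with $\|x\| \leq c$ the scalar inequality $t^2 \leq c t$ holds on the spectrum $[0,c]$, and by continuous functional calculus this promotes to the operator inequality $x^2 \leq c x$. Applying this with $c = \|S\|$ to each $x_{j_k}$ and summing in $j$ yields
\[
\sum_j x_{j_k}^2 \pl \leq \pl \|S\| \sum_j x_{j_k} \pl = \pl \|S\|\, S.
\]
Taking norms gives $\|\sum_j x_{j_k}^2\| \leq \|S\|^2$, and extracting the square root finishes (i).

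For part (ii), the observation is simply that self-adjointness of each $x_{j_k}$ makes $x_{j_k}^2 = x_{j_k}^* x_{j_k} \geq 0$, so $y := \sum_j x_{j_k}^2$ is a positive element of the C*-algebra. Its positive square root $y^{1/2}$ is then defined via continuous functional calculus, and the C*-identity $\|a^* a\| = \|a\|^2$ applied to $a = y^{1/2}$ produces
\[
\|y\| \pl = \pl \|(y^{1/2})^* y^{1/2}\| \pl = \pl \|y^{1/2}\|^2,
\]
which rearranges to the asserted equality.

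There is no serious obstacle in either direction; both claims are short consequences of operator monotonicity and the C*-identity. The one point worth flagging is that genuine positivity, and not just self-adjointness, is essential in (i): for merely self-adjoint $x_{j_k}$ the scalar bound $t^2 \leq c t$ fails on the negative part of the spectrum, and indeed the inequality can be violated outright (e.g.\ $x_1 = -x_2 \neq 0$ makes $\sum_j x_{j_k} = 0$ while $\sum_j x_{j_k}^2 > 0$). This is why in Corollary \ref{th3.3} positivity rather than self-adjointness is what one really needs when the bound from (i) is invoked.
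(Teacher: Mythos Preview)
Your proof is correct and follows essentially the same route as the paper. For (i) the paper writes $x_{j_k}^2 = x_{j_k}^{1/2}\, x_{j_k}\, x_{j_k}^{1/2}$ and bounds the middle factor by $\|\sum x_{j_k}\|\cdot 1$, which is exactly your inequality $x_{j_k}^2 \le \|S\|\,x_{j_k}$ in conjugation form; for (ii) both arguments invoke the $C^*$-identity $\|y^{1/2}\|^2 = \|y\|$ for the positive element $y=\sum x_{j_k}^2$.
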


\begin{proof} 
	(i) Indeed, we have
	\begin{eqnarray*}
		\|\sum x_{j_k}^{2}\|^{\frac{1}{2}}&=&\|\sum x_{j_k}^{\frac{1}{2}} x_{j_k}x_{j_k}^{\frac{1}{2}}\|^{\frac{1}{2}} \\
		&\leq& (\|\sum x_{j_k}\|^{\frac{1}{2}}\cdot\|\sum x_{j_k}\| \cdot \|\sum x_{j_k}\|^{\frac{1}{2}})^{\frac{1}{2}} \\
		&=&\|\sum x_{j_k}\|.
	\end{eqnarray*}
	
		(ii) Holds trivially using $\|x^2\|=\|x\|^2$, for $x=(\sum x_{j_k}^{2} )^{\frac{1}{2}}$. \qedhere\\
\end{proof}
\subsection{AGM inequality for the norm}
Now we have done all the preparation to prove the NC-AGM inequality for the norm.
\begin{theorem}\label{w1}
	Suppose $x_1, \dots, x_n$ are positive operators in $B(H)$. Then
	\begin{equation}
	\|P_d(x_1,...,x_n)\|_{B(H)}^{1/d} \leq d~\|P_1(x_1,...,x_n)\|_{B(H)}  .
	\end{equation} 
\end{theorem}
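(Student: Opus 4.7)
The plan is to combine Pisier's M\"obius inversion with the norm estimate from Corollary~\ref{th3.3}. The starting point is the identification of the restricted partition associated to $\sigma=\dot{0}$ (the partition of $\{1,\ldots,d\}$ into singletons) as the sum over $d$-tuples of distinct indices, which gives $P_d(x_1,\ldots,x_n) = \tfrac{(n-d)!}{n!}\langle\dot{0}\rangle$. Applying Proposition~\ref{pr2.5} (equation~\eqref{eq2.3}) yields
\[
\langle\dot{0}\rangle \;=\; \sum_{\nu \geq \dot{0}} \mu(\dot{0},\nu)\,[\nu],
\]
so by the triangle inequality $\|P_d\| \leq \tfrac{(n-d)!}{n!}\sum_{\nu}|\mu(\dot{0},\nu)|\,\|[\nu]\|$.

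Next I would bound every $\|[\nu]\|$ uniformly. Corollary~\ref{th3.3} gives $\|[\nu]\| \leq \|\sum_j x_j\|^{|\nu_s|}\cdot\|\sum_j x_j^2\|^{|\nu_{ns}|/2}$. Since each $x_j$ is positive, Lemma~\ref{lem34}(i) supplies $\|\sum_j x_j^2\|^{1/2}\leq\|\sum_j x_j\|$, and since every index lies either in a singleton block or in a non-singleton block, $|\nu_s|+|\nu_{ns}|=d$; hence one obtains the partition-independent estimate $\|[\nu]\|\leq\|\sum_j x_j\|^d = n^d\|P_1\|^d$. This uniformity is what makes the M\"obius sum collapse: Theorem~\ref{thm22}(iii) then supplies exactly $\sum_{\nu\geq\dot{0}}|\mu(\dot{0},\nu)|=d!$.

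Combining these pieces yields $\|P_d\| \leq \tfrac{d!\,n^d}{n(n-1)\cdots(n-d+1)}\|P_1\|^d$, and the theorem reduces to the elementary scalar inequality
\[
\frac{d!}{d^d} \;\leq\; \prod_{k=0}^{d-1}\Bigl(1-\tfrac{k}{n}\Bigr) \;=\; \frac{n(n-1)\cdots(n-d+1)}{n^d}, \qquad n\geq d,
\]
which is immediate since each factor $1-k/n$ is nondecreasing in $n$ and the product equals $d!/d^d$ at $n=d$. Taking $d$-th roots concludes $\|P_d\|^{1/d} \leq d\,\|P_1\|$.

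The main obstacle is really the uniform bound $\|[\nu]\| \leq \|\sum_j x_j\|^d$: one must check that Corollary~\ref{th3.3} produces the same right-hand side for every $\nu$, independent of how its indices split between singleton and non-singleton blocks. Positivity of the $x_j$ is essential here via Lemma~\ref{lem34}(i); without it, the $\|\sum x_j^2\|^{1/2}$ factors from non-singleton blocks cannot be absorbed into $\|\sum x_j\|$, and the M\"obius count of $d!$ would no longer be compatible with the target constant $d^d$.
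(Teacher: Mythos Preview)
Your proof is correct and follows essentially the same route as the paper's: M\"obius inversion for $\langle\dot 0\rangle$, the uniform bound $\|[\nu]\|\le\|\sum x_j\|^d$ from Corollary~\ref{th3.3} combined with Lemma~\ref{lem34}(i), the count $\sum_\nu|\mu(\dot 0,\nu)|=d!$ from Theorem~\ref{thm22}(iii), and finally the monotonicity in $n$ of the constant $\frac{d!\,n^d(n-d)!}{n!}$ with value $d^d$ at $n=d$. The only cosmetic difference is that the paper splits off the $\nu=\dot 0$ term (with $\mu(\dot 0,\dot 0)=1$) and sums the remaining $d!-1$, whereas you keep the full sum; the arithmetic is identical.
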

\begin{proof}
	From Corollary \ref{th3.3} and Lemma \ref{lem34}, we deduce that for a given  arbitrary partition $\sigma$ and positive elements $x_{j_{k}}=x_{j}$, we have 

	\begin{equation*}
	\|[\sigma]\|_{B(H)} \leq \|\sum x_j\|^{d}.
	\end{equation*}
	Recall identity \ref{eq2.3} from Proposition \ref{pr2.5}:
	\begin{equation}\label{eqm1}
	\langle 1,\cdots,d \rangle=[1,\cdots,d]+ \sum\limits_{\upsilon \gneqq \dot{0}} \mu(\dot{0},\nu)[\nu],~~\text{where}  \sum\limits_{\upsilon \gneqq \dot{0}} | \mu(\dot{0},\nu)|=d!-1.
	\end{equation}
	Taking the norm of both sides of the equality \eqref{eqm1} we get
	\begin{eqnarray*}
		\|\langle 1,\cdots,d \rangle\|_{B(H)} &=&\|[1,\cdots,d]+ \sum\limits_{\upsilon \gneqq \dot{0}} \mu(\dot{0},\nu)[\nu]\|_{B(H)}  \\
		&\leq& \|[1,\cdots,d]\|_{B(H)} +\sum\limits_{\upsilon \gneqq \dot{0}} |\mu(\dot{0},\nu)|\|[\nu]\|_{B(H)}   \\
		&\leq&\|\sum x_j\|_{B(H)}^d+(d!-1)\|\sum x_j\|_{B(H)}^d \\
		&\leq& d!\|\sum x_j\|_{B(H)}^d\\
		&=& d!n^d \|\frac{1}{n}\sum x_j\|_{B(H)}^d \\
		&=& d!n^d \|P_{1}(x_{1},...,x_{n})\|_{B(H)}.
	\end{eqnarray*} 
	Thus,
	\begin{equation*}
	\|P_{d}(x_{1},...,x_{n})\|_{B(H)}\leq \frac{d!n^d(n-d)!}{n!} \|P_{1}(x_{1},...,x_{n})\|_{B(H)}.
	\end{equation*}
	
	Denote $C(n,d):=\frac{d!n^d(n-d)!}{n!}$, and for fixed $d$ define $f(n):=\sum\limits_{i=0}^{d-1} \log \frac{n}{n-i} $. Then 
	\begin{eqnarray*}
		C(n,d)&=&\frac{d!n^d(n-d)!}{n!}=\frac{d!n^d}{n(n-1)(n-2)\cdots(n-d+1)} \\
		&=& d!\cdot\frac{n}{n}\cdot\frac{n}{n-1}\cdot\frac{n}{n-2} \cdots\frac{n}{n-d+1} \\
		&=& d!\cdot \exp(f(n)).
	\end{eqnarray*}
	
	Since $f(n)$ is a decreasing function in $n$, $C(n,d)$ is also a decreasing function with respect to the variable $n$. From the definition of $d$, we know $n\geq d$, so $\max\limits_{n\geq d}~C(n,d)= C(d,d)=d^d.$ \qedhere
\end{proof}


	

\subsection{AGM inequality for the order}
Recall that the average product is defined by:
$$P_{d}(x_{1},x_{2},...,x_{n})=\frac{(n-d)!}{n!}\sum\limits_{\langle \sigma \rangle=\dot{0}}x_{i_{1}}...x_{i_{d}}.$$
\begin{lemma}\label{lemma3.7}
	Let $\{x_i\}$  be a finite family of positive operators in $B(H)$ which satisfy the condition $\sum\limits_{i=1}^n x_i= n$.
	If $a_{i}:=x_{i}- 1$ then
	\begin{equation} 
	P_{d}(x_{1},x_{2},...,x_{n})=1+\sum\limits_{k=1}^{d}\binom{d}{k}P_{k}(a_{1},a_{2},...,a_{n}).\label{inq3.4}
	\end{equation}
\end{lemma}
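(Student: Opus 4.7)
The plan is to substitute $x_i = 1 + a_i$ into the definition
\[
P_d(x_1,\ldots,x_n)=\frac{(n-d)!}{n!}\sum_{i_1,\ldots,i_d \text{ distinct}} x_{i_1}\cdots x_{i_d}
\]
and expand. First I would write
\[
x_{i_1}\cdots x_{i_d}=\prod_{j=1}^{d}(1+a_{i_j})=\sum_{S\subseteq\{1,\ldots,d\}}\prod_{j\in S}a_{i_j},
\]
where the product on the right is taken in the increasing order of $j\in S$ (so that noncommutativity is respected). Then I would swap the sums over distinct tuples $(i_1,\ldots,i_d)$ and subsets $S$ of $\{1,\ldots,d\}$.

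The main step is the reindexing. Fix a subset $S=\{j_1<\cdots<j_k\}\subseteq\{1,\ldots,d\}$ of cardinality $k$. Observe that the inner summand $\prod_{\ell=1}^k a_{i_{j_\ell}}$ only depends on the $k$ coordinates indexed by $S$, while the remaining $d-k$ indices appear merely through the distinctness constraint. Therefore one may first choose a distinct $k$-tuple $(i_{j_1},\ldots,i_{j_k})$ and then extend it by a distinct $(d-k)$-tuple drawn from the remaining $n-k$ indices. The latter extension contributes the factor $(n-k)(n-k-1)\cdots(n-d+1)=(n-k)!/(n-d)!$. This gives
\[
\sum_{i_1,\ldots,i_d \text{ distinct}}\prod_{\ell=1}^k a_{i_{j_\ell}}=\frac{(n-k)!}{(n-d)!}\sum_{i_1,\ldots,i_k \text{ distinct}}a_{i_1}\cdots a_{i_k}=\frac{n!}{(n-d)!}\,P_k(a_1,\ldots,a_n),
\]
independent of which particular $S$ of size $k$ we chose.

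Combining the $\binom{d}{k}$ subsets of size $k$ for $k\ge 1$ and treating $k=0$ separately (the empty product equals $1$, contributing $n!/(n-d)!$ copies), multiplying by the normalization $(n-d)!/n!$, yields
\[
P_d(x_1,\ldots,x_n)=1+\sum_{k=1}^d\binom{d}{k}P_k(a_1,\ldots,a_n),
\]
which is the desired identity. Note that the hypothesis $\sum x_i=n$, equivalently $\sum a_i=0$, is not actually needed for \eqref{inq3.4} itself; it is the substitution $a_i=x_i-1$ alone that drives the calculation, and the mean-zero condition will only be invoked downstream (as in the $d=3$ discussion) to kill the term $P_1(a_1,\ldots,a_n)$. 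The only point requiring care is keeping the noncommutative ordering consistent when expanding $\prod(1+a_{i_j})$ into the sum over subsets $S$, which is why we fix the increasing order on $S$; no genuine obstacle arises.
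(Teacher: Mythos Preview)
Your argument is correct. Both you and the paper begin by substituting $x_i=1+a_i$ and expanding the product over distinct tuples, arriving at an expression of the form $1+\sum_{k=1}^d\lambda_k P_k(a_1,\ldots,a_n)$. The difference lies only in how the coefficients $\lambda_k$ are identified: you carry out the combinatorial reindexing explicitly, counting the $(n-k)!/(n-d)!$ extensions of each distinct $k$-tuple to a distinct $d$-tuple and thereby reading off $\lambda_k=\binom{d}{k}$ directly; the paper instead asserts the structural form and then specializes to the scalar case $x_1=\cdots=x_n=t=1+a$, where $P_d=t^d=(1+a)^d$ and $P_k(a,\ldots,a)=a^k$, so the ordinary binomial theorem forces $\lambda_k=\binom{d}{k}$. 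Your route is more self-contained (and in fact justifies the structural form the paper takes for granted), while the paper's specialization trick is a quick way to avoid the bookkeeping once one accepts that the $\lambda_k$ are universal constants. Your remark that the hypothesis $\sum_i x_i=n$ is not used in the identity itself is also correct.
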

\begin{proof}
	This lemma can be proved by two methods. The first method is by induction which is left to the reader. For the convenience of the reader we give the second proof, using the binomial identity. Then we have
	\begin{eqnarray*}
		P_{d}(x_{1},...,x_{n})&=&\frac{(n-d)!}{n!}\sum_{\langle \sigma \rangle=\dot{0}}x_{i_{1}}...x_{i_{d}}\\
		&=&\frac{(n-d)!}{n!}\sum_{\langle \sigma \rangle=\dot{0}}(a_{i_{1}}+1)(a_{i_{2}}+1)...(a_{i_{d}}+1)=1+\sum_{k=1}^{d}\lambda_{k}P_{k}(a_{1},...,a_{n}).
	\end{eqnarray*}
	
	Let $x_{1}=x_{2}=....=x_{n}=t$, where $t=a+1$. Then
	$$P_{d}(x_{1},...,x_{n})=t^{d}=(1+a)^{d}=1+\sum_{k=1}^{d}\binom{d}{k}a^{k},$$
	which implies that $\lambda_{k}=\binom{d}{k}$, so 
	$P_{d}(x_{1},...,x_{n})=1+\sum_{k=1}^{d}\binom{d}{k}P_{k}(a_{1},...,a_{n}).\qedhere$ 
\end{proof}

In Theorem \ref{thm1.3} for $d$=3, we deduce that each term in $P_{3}(x_{1},...,x_{n})$ has an upper bound of some scalar multiple of $\sum a_{i}^{2}$. For $d>3$, we need the following lemma.

\begin{lemma}\label{lemma3.8}
	If $\{x_{i}\},\{a_{i}\}$ are defined as above,
	 then $$\max_{i}\| a_i\|  \leq \| \sum a_i^2 \|^{\frac{1}{2}} \leq \|\sum_{i}x_{i}^2\|^{\frac{1}{2}}. $$ In particular, $\| a_i\|^k \leq n^k  \|\frac{1}{n^2}\sum_{i}x_{i}^2\|^{\frac{k}{2}} .$
	\end{lemma}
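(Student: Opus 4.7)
The plan is straightforward and splits into three short computations corresponding to the three claims. The key identity that makes everything work is the expansion $\sum_i a_i^2 = \sum_i (x_i-1)^2 = \sum_i x_i^2 - 2\sum_i x_i + n = \sum_i x_i^2 - n$, which uses the normalization $\sum x_i = n$. Once this is in hand, both operator inequalities reduce to an easy positivity argument plus the elementary fact that $0 \le A \le B$ implies $\|A\| \le \|B\|$ for self-adjoint operators.

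For the first inequality, $\max_i \|a_i\| \leq \|\sum a_i^2\|^{1/2}$, I would observe that each $a_i$ is self-adjoint (since $x_i$ is), so $a_j^2 \ge 0$ and $\sum_i a_i^2 - a_j^2 = \sum_{i \ne j} a_i^2 \ge 0$. Hence $a_j^2 \le \sum_i a_i^2$, which gives $\|a_j\|^2 = \|a_j^2\| \le \|\sum_i a_i^2\|$; taking square roots and the supremum over $j$ delivers the bound.

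For the second inequality, $\|\sum a_i^2\|^{1/2} \leq \|\sum x_i^2\|^{1/2}$, I would use the identity above to write $\sum_i a_i^2 = \sum_i x_i^2 - n\cdot 1$. Since $n \cdot 1 \ge 0$, we get $0 \le \sum_i a_i^2 \le \sum_i x_i^2$, and monotonicity of the operator norm on positive operators (together with the square root being operator monotone, or just taking square roots of scalars) finishes it.

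For the ``in particular'' statement, raise the chain $\|a_i\| \le \|\sum a_j^2\|^{1/2} \le \|\sum x_j^2\|^{1/2}$ to the $k$-th power, then rewrite $\|\sum x_j^2\|^{k/2} = n^k \bigl(\tfrac{1}{n^2}\|\sum x_j^2\|\bigr)^{k/2} = n^k \|\tfrac{1}{n^2}\sum x_j^2\|^{k/2}$, which is pure scalar rescaling. There is no genuine obstacle here; the only thing to be careful about is that self-adjointness of the $a_i$ is what guarantees $a_i^2 \ge 0$, and the hypothesis $\sum x_i = n$ is precisely what is needed to cancel the cross term in the expansion of $\sum (x_i-1)^2$.
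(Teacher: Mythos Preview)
Your proposal is correct and follows essentially the same approach as the paper. The paper's proof is identical in substance: it uses $a_j^2 \le \sum_i a_i^2$ for the first inequality and the identity $\sum x_i^2 = \sum a_i^2 + n$ (equivalent to your $\sum a_i^2 = \sum x_i^2 - n$) for the second, and leaves the ``in particular'' as an immediate consequence.
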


\begin{proof}
	
	Since we have $a_j^2 \leq \sum a_i	^2$, 
	$$\|a_i\| =\|a_i^2\|^\frac{1}{2}\leq  \|\sum a_i	^2\|^\frac{1}{2}.$$
	Moreover, for each $a_i$, we have $x_i=a_i+1$. Thus $$\sum x_i^2=\sum a_i^2+n\geq \sum a_{i}^2$$ 
	This finishes the proof.\qedhere
\end{proof}

Note that for a partition with $d=3$, the proof of the AGM inequality in the order sense was easily done in the introduction. However, the proof is much more complicated for $d\geq 4$. The complication comes from crossing partitions, so we need the following useful known lemma \cite{paulsen2002completely}.
\begin{lemma} \label{Co} Assume $a,b\in B(H)$ and $t\geq 0$. Then
	
	\begin{eqnarray*} 
	 &(1)& -(a^*a+b^*b) \leq a^*b+b^*a \leq a^*a+b^*b \\ 
	 &(2)& ab+b^{*}a^{*}\leq t^{2}aa^{*}+t^{-2}b^{*}b \\
	\end{eqnarray*}
\end{lemma}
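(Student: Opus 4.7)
The plan is to derive both parts from the single elementary fact that $X^*X \geq 0$ (equivalently $XX^* \geq 0$) for every bounded operator $X$, choosing $X$ so that the desired cross term appears on expansion. This is the standard polarization-type argument, and it also explains where the parameter $t$ in (2) comes from.

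For part (1), I would take $X_{\pm} = a \pm b$ and use $X_{\pm}^* X_{\pm} \geq 0$. Expanding gives
$$a^*a + b^*b \pm (a^*b + b^*a) \;\geq\; 0,$$
and reading the two sign choices separately yields the two-sided bound $-(a^*a+b^*b)\leq a^*b+b^*a \leq a^*a+b^*b$.

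For part (2), the goal is to produce $ab + b^*a^*$ as a cross term with coefficients $t^2$ on $aa^*$ and $t^{-2}$ on $b^*b$. The natural candidate is $X = ta - t^{-1}b^*$ for $t>0$, whose adjoint is $X^* = ta^* - t^{-1}b$. Then
$$XX^* = (ta - t^{-1}b^*)(ta^* - t^{-1}b) = t^2 aa^* - ab - b^*a^* + t^{-2} b^*b \;\geq\; 0,$$
and rearranging gives exactly $ab + b^*a^* \leq t^2 aa^* + t^{-2} b^*b$.

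The only subtlety is that the coefficient $t^{-2}$ forces $t>0$, even though the statement is written for $t \geq 0$; I would either restrict to $t>0$ or note that the $t=0$ case is vacuous/trivial (taking the right-hand side as the obvious limit). There is no real obstacle here—once $X = ta - t^{-1}b^*$ is identified, the proof reduces to expanding a single positive square. In applications later in the paper, the role of $t$ will just be to balance the two terms on the right, so the useful content of the lemma is the freedom to choose that scale.
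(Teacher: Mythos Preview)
Your argument is correct and matches the paper's own proof: for (1) the paper also expands $(a\pm b)^*(a\pm b)\ge 0$, and for (2) it observes that substituting $a\mapsto ta^*$, $b\mapsto t^{-1}b$ into the upper bound of (1) gives the result, which is exactly your choice $X=ta-t^{-1}b^*$ unwound. Your remark that the inequality only makes sense for $t>0$ is well taken; the paper's hypothesis $t\ge 0$ is a minor slip.
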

 To prove (1), we start by observing $(a+b)^*(a+b), (a-b)^*(a-b) \geq 0$. This directly gives $-(a^*a+b^*b)\leq a^*b+b^*a$ and $ a^*b+b^*a \leq a^*a+b^*b$. It is clear that (2) is a special case of (1), using the assumptions that $a=ta^*$ and $b=t^{-1}b$ for the upper bound of (1). \vspace{0.2in}

The two previous lemmas will help in establishing our result for general case of the AGM inequality for the order. 
For convenience, we will write $A_{i}:=\sum_{i} Z_{i}$ where $Z_{i}$ is defined as at the beginning of Section 3.1. We now provide upper and lower bounds for $P_{d}(a_{i_1},...,a_{i_n}).$ 
\begin{lemma} If $\{a_i\}$ and $\{x_{i}\}$ are defined as above, then for $S=\|\sum x_{i}^{2}\|^{1/2}$
	$$   -\frac {(n-d)!} {n!} d!~S^{d-2}~\sum a_i^2  \leq  P_d(a_1,a_2,\cdots,a_n) \leq \frac {(n-d)!} {n!} d!~ S^{d-2}\sum a_i^2.$$
\end{lemma}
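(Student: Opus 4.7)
The plan is to use the hypothesis $\sum_i a_i = 0$ to eliminate from the Möbius expansion every partition with a singleton block, and then to bound each surviving $[\nu]$ in the operator order by combining the Pisier tensor embedding with the Cauchy--Schwarz-type inequality of Lemma~\ref{Co}.

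First I would apply Proposition~\ref{pr2.5} to write
$$P_d(a_1,\ldots,a_n) \;=\; \frac{(n-d)!}{n!}\,\langle\dot{0}\rangle \;=\; \frac{(n-d)!}{n!}\sum_{\nu\ge\dot{0}}\mu(\dot{0},\nu)\,[\nu].$$
Whenever $\nu$ contains a singleton block $\{k\}$, Lemma~\ref{1} expresses the $k$-th factor as $\sum_{j_k}Z_{j_k}^k = 1\otimes\cdots\otimes\sum_j a_j = 0$, so $[\nu]=0$. Hence the sum is supported on partitions $\nu$ all of whose blocks have cardinality at least two.

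Fix such a non-singleton $\nu$ and write $[\nu] = A_1 A_2\cdots A_d$ by Lemma~\ref{1}, with $A_k := \sum_{j_k}Z_{j_k}^k$. A direct matrix-unit computation, of the sort performed in the proof of Corollary~\ref{th3.3}, shows $\|A_k\|^2 \le \|\sum_j a_j^2\| \le S^2$ for every $k$, while the boundary factors satisfy $A_1 A_1^{*} = e_{11}\otimes 1\otimes\cdots\otimes\sum_j a_j^2$ and $A_d^{*} A_d = 1\otimes\cdots\otimes e_{11}\otimes\sum_j a_j^2$. Splitting $[\nu] = A_1\cdot(A_2\cdots A_d)$ and invoking Lemma~\ref{Co}(2) with parameter $t^2 := S^{d-2}$ gives
$$\pm\bigl([\nu]+[\nu]^{*}\bigr) \;\le\; S^{d-2} A_1 A_1^{*} \;+\; S^{-(d-2)}(A_2\cdots A_d)^{*}(A_2\cdots A_d) \;\le\; 2\,S^{d-2}\sum_j a_j^2,$$
where the last step peels off the interior factors by iterating $A_k^{*}A_k \le S^2 I$, and the $\pm$ sign is handled by replacing $A_1$ with $-A_1$. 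Since $[\nu]$ is supported in the rank-one corner $p := e_{11}^{\otimes|\nu|}\otimes 1$, compressing the inequality by $p$ on both sides transports it into an honest operator inequality in $B(H)$ for the underlying partition element.

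Finally, $P_d(a_1,\ldots,a_n)$ is self-adjoint (being a symmetric average over all orderings of distinct indices), so
$$P_d(a_1,\ldots,a_n) \;=\; \frac{(n-d)!}{2\,n!}\sum_{\nu}\mu(\dot{0},\nu)\bigl([\nu]+[\nu]^{*}\bigr),$$
the sum restricted to non-singleton $\nu$. Bounding each summand by the previous paragraph and using Theorem~\ref{thm22}(iii), which provides $\sum_{\nu}|\mu(\dot{0},\nu)|\le d!$, yields the claimed two-sided inequality. The main technical obstacle is the passage from the tensor-embedded inequality to a genuine inequality in $B(H)$: the operators $A_1 A_1^{*}$ and $A_d^{*} A_d$ live in different corners of the embedded algebra, and one must verify carefully that compression by the common support projection $p$ of $[\nu]$ collapses both into the single operator $\sum_i a_i^2 \in B(H)$; once this compression is in place, the Möbius bookkeeping proceeds exactly as above.
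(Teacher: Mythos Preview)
Your proposal is correct and follows essentially the same strategy as the paper. The paper pairs each partition $\nu$ with its reversal $\bar\nu$ and bounds $[\nu]+[\bar\nu]$, while you use self-adjointness of $P_d$ to pair $[\nu]$ with $[\nu]^*$; since $[\nu]^*=[\bar\nu]$ for self-adjoint $a_i$ these are the same pairing, and the rest of the argument (Lemma~\ref{Co} with the scaling $t^2\sim S^{d-2}$, peeling off interior factors via $A_k^*A_k\le S^2$, and the M\"obius count $\sum|\mu(\dot0,\nu)|=d!$) coincides. Your treatment of the compression step---identifying $[\nu]$ with its image in the corner $e_{11}^{\otimes|\nu|}\otimes B(H)$ and observing that both $A_1A_1^*$ and $A_d^*A_d$ are dominated by $1\otimes\cdots\otimes 1\otimes\sum_j a_j^2$---is actually more explicit than the paper's presentation of the same passage.
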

\begin{proof}
	From Proposition \ref{pr2.5}, we know
	$$\frac{n!}{(n-d)!}P_d(a_1,a_2,\cdots,a_n)=\langle\dot{0} \rangle_d =[ \dot{0}]_d + \sum_{\dot{0}\lneqq \nu \leq \dot{1}}\mu(\dot{0},\nu)[\nu]_d.$$
	We will prove first the case when $\mu(\dot{0},\nu) \geq 0$. We will obtain an upper bound for the sum $ [\nu]_d$ by introducing $[\bar{\nu}]_d$ as the following: 
	$$	[\nu]_d =\sum\limits_{\langle\dot{i_1,i_2,\cdots,i_d} \rangle \geq \nu} a_{i_1}a_{i_2}\cdots a_{i_d},$$
	$$[\bar{\nu}]_d := \sum\limits_{\langle\dot{i_1,i_2,\cdots,i_d} \rangle \geq \nu} a_{i_d}a_{i_{d-1}}\cdots a_{i_1}.$$
	Here the $\bar{\nu}$ can be viewed as the transposition of the partition $\nu$. By Theorem \ref{thm22}, we have 
	 $\mu(\dot{0},\pi)=\prod_{i=1}^{d}[(-1)^{i-1}(i-1)!]^{r_{i}(\pi)}$. So $\mu(\dot{0},\nu)=\mu(\dot{0},\bar{\nu})$. Thus, we can sum these two items together.\\
	\textbf{Claim}: For every partition $\nu$ and $S=\|\sum x_{i}^{2}\|^{1/2}$ we have 
	\begin{equation}\label{transpose}
	-2~S^{d-2}\sum a_i^2\leq [\nu]_d + [\bar{\nu}_d]\leq 2~S^{d-2}\sum a_i^2.
	\end{equation}
	The idea here is to use our modification of Pisier's trick for these two partitions. Recall that $Z_{i_1}=e_{1i_{1}}\otimes a_{i_{1}}$ is for the first component in the partition,
	$Z_{i_j}=e_{jj}\otimes a_{i_{j}}$ is for the elements in the middle of the partition, and $Z_{i_d}=e_{i_{d}1}\otimes a_{i_d}$  is for the last element in the partition. Then we have
	\raggedbottom
\begin{eqnarray*}
		[\nu]_d+[\bar{\nu}]_d &=&\sum\limits_{\langle\dot{i_1,i_2,\cdots,i_d} \rangle \geq \nu} a_{i_1}a_{i_2}\cdots a_{i_d} +a_{i_d}a_{i_{d-1}}\cdots a_{i_1}  \\
		&=& \sum_{i_1}Z_{i_1}...\sum_{i_d}Z_{i_d}+\sum_{i_d}Z_{i_d}^{*}...\sum_{i_1} Z_{i_1}^{*}\\
		&=& A_{1}...A_{d}+A_{d}^{*}...A_{1}^{*}.
	\end{eqnarray*}
	By applying Lemma \ref{Co} with $S=\|\sum x_{i}^{2}\|^{1/2}$,
	\begin{eqnarray}
		[\nu]_d+[\bar{\nu}]_d
		&=& A_{1}\cdots A_{d}+A_{d}^{*}\cdots A_{1}^{*}\\\label{inq3.1}\nonumber
		&\leq& t^2A_{1}A_{1}^{*}+ t^{-2}A_{d}^{*} \cdots A_{2}^{*}A_{2} \cdots A_{d}\\\nonumber
		&\leq& t^2A_{1}A_{1}^{*}+t^{-2}\prod_{j=2}^{d-1}\|A_{j}^{*}A_{j}\| A_{d}^{*}A_{d}\\\nonumber
		&\leq& t^2 A_{1}A_{1}^{*}+t^{-2} \prod_{j=2}^{d-1}\|A_{j}\|^{2} A_{d}^{*}A_{d}\\\label{in3.6}
		&\leq& t^2 A_{1}A_{1}^{*}+t^{-2}\prod_{j=2}^{d-1}\|\sum a_{j}^{2}\|A_{d}^{*}A_{d}  \\\label{in3.7}
		&\leq& t^{2}A_{1}A_{1}^{*}+t^{-2} \|\sum a_{i}^{2}\|^{d-2}A_{d}^{*}A_{d}\\
		&\leq& \|\sum a_{i}^{2}\|^{d/2-1}(A_{1}A_{1}^{*}+A_{d}^{*}A_{d}) \nonumber
		\leq 2\sum a_{i}^{2}~S^{d-2}.\nonumber
	\end{eqnarray}
	 Indeed, if our partition contains the singleton then $[\nu]_d+[\bar{\nu}]_d$ is already zero. Hence we may assume there are no singletons in our partition as it also can be noticed in inequality \eqref{in3.7}. Indeed, if the index is a singleton in partition $\nu$, then it is controlled by the summation norm $\|\sum a_i\|$ which is zero by our construction. On the other hand, if the index is in a non-singleton block, then by Theorem \ref{th3.3}, it is controlled by the square norm $\|\sum a_i^2\|.$ Therefore, in both cases, $\|A_i\|$ is controlled by the square norm of $a_{i}$. To get inequality \eqref{in3.6}, we may apply the norm equality as in equality \eqref{in3.3} from section 2. For the inequality \eqref{in3.7}, we use Lemma \eqref{Co} by choosing $t^2=S^{d/2-1}$.
	 Then we have 
		\begin{align*}
		&\frac{n!}{(n-d)!}P_d(a_1,a_2,\cdots,a_n)
		=[\dot{0}]_d + \sum_{\dot{0}\lneqq \nu \leq \dot{1}}\mu(\dot{0},\nu)[\nu]_d =\sum_{\dot{0}\lneqq \nu \leq \dot{1}}\mu(\dot{0},\nu)[\nu]_d \\
		&=\sum_{\mu(\dot{0},\nu)\geq 0}\mu(\dot{0},\nu)[\nu]_d + \sum_{\mu(\dot{0},\nu)\leq 0}\mu(\dot{0},\nu)[\nu]_d \\
		&= \frac{1}{2} \Big( \sum_{\mu(\dot{0},\nu)\geq 0}\mu(\dot{0},\nu)[\nu]_d +\sum_{\mu(\dot{0},\bar{\nu})\geq 0}\mu(\dot{0},\bar{\nu})[\bar{\nu}]_d \Big)\\
		&+ \frac{1}{2} \Big(\sum_{\mu(\dot{0},\nu)\leq 0}\mu(\dot{0},\nu)[\nu]_d+\sum_{\mu(\dot{0},\bar{\nu})\leq 0}\mu(\dot{0},\bar{\nu})[\bar{\nu}]_d \Big)
		\end{align*}
		\begin{align*}
		\frac{n!}{(n-d)!}P_d(a_1,a_2,\cdots,a_n)&\leq \sum_{\mu(\dot{0},\nu)\geq 0}\mu(\dot{0},\nu)S^{d-2} \sum a_i^2 -\sum_{\mu(\dot{0},\nu)\leq 0}\mu(\dot{0},\nu)S^{d-2} \sum a_i^2\\
		&=  \sum |\mu(\dot{0},\nu)|~S^{d-2} (\sum a_i^2)=d!~S^{d-2}\sum a_i^2.
	\end{align*}
	For the lower bound, the proof is similar to the one above replacing $A_{1}$ by $-A_{1}$. 
\end{proof}
\begin{theorem} \label{thm1.11}
	(AGM inequality for the order) Fix $n$ and $d$. Let $x_{1},...,x_{n}$ be self-adjoint operators such that $\sum\limits_{i}x_{i}=n$ and $a_{i}=x_{i}-1$ as above. Assume the following conditions hold:
	\begin{enumerate}
		\item[i)]  $P_1(x_1,...,x_n)=\frac{\sum_{i} x_{i}}{n}=1,$
		\item[ii)]  $\|(\sum x_{i}^2)^{\frac{1}{2}}\|\leq\ \frac{n}{3d}.$ 
	\end{enumerate}
	Then the AGM inequality holds in the order sense:
	 $$P_d(x_1,x_2,\cdots,x_n) \leq \Big(\frac{\sum_{i}x_{i}}{n}\Big)^d=1.$$
\end{theorem}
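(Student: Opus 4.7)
The plan is to combine Lemma \ref{lemma3.7} with the upper bound
\begin{equation*}
P_k(a_1,\ldots,a_n) \pl \leq \pl \frac{(n-k)!}{n!} \pl k!\pl S^{k-2}\sum_{i} a_i^2
\end{equation*}
from the preceding lemma (with $S = \|(\sum_i x_i^2)^{1/2}\|$), and to reduce everything to a geometric series estimate tuned precisely to the constant $3d$ in hypothesis (ii). First, Lemma \ref{lemma3.7} lets me rewrite $P_d(x_1,\ldots,x_n) = 1 + \sum_{k=1}^d \binom{d}{k} P_k(a_1,\ldots,a_n)$, so the desired inequality is equivalent to $\sum_{k=1}^d \binom{d}{k} P_k(a_1,\ldots,a_n) \leq 0$. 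The $k=1$ contribution vanishes since $\sum_i a_i = 0$, and a direct computation (already carried out in the $d=3$ case of the introduction) yields the exact identity $P_2(a_1,\ldots,a_n) = -\frac{(n-2)!}{n!}\sum_i a_i^2$; this supplies the only genuinely negative piece, and the task is to show that the positive contributions from $k \geq 3$ cannot overpower it.

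Since $\sum_i a_i^2 \geq 0$ as an operator, combining the operator bound for $P_k(a)$ with the exact formula for $P_2(a)$ reduces the theorem to the scalar coefficient inequality
\begin{equation*}
\sum_{k=3}^d \binom{d}{k}\frac{(n-k)!}{n!}\pl k!\pl S^{k-2} \pl \leq \pl \binom{d}{2}\frac{(n-2)!}{n!}.
\end{equation*}
Dividing through by the right-hand side and telescoping the factorial ratios (one verifies $\frac{(d-2)!}{(d-k)!} \cdot \frac{(n-k)!}{(n-2)!} = \prod_{j=2}^{k-1}\frac{d-j}{n-j}$), the inequality collapses to
\begin{equation*}
2\sum_{k=3}^d \prod_{j=2}^{k-1}\frac{(d-j)\pl S}{n-j} \pl \leq \pl 1.
\end{equation*}

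The decisive observation is that, under hypothesis (ii) together with the elementary inequality $n(d-j) \leq d(n-j)$ (valid for all $n \geq d$ and $j \geq 0$), each factor satisfies
\begin{equation*}
\frac{(d-j)\pl S}{n-j} \pl \leq \pl \frac{n(d-j)}{3d(n-j)} \pl \leq \pl \frac{1}{3}.
\end{equation*}
Hence the $k$-th summand is bounded by $(1/3)^{k-2}$, and the geometric series gives
\begin{equation*}
2\sum_{k=3}^d (1/3)^{k-2} \pl \leq \pl 2\sum_{m=1}^{\infty}(1/3)^{m} \pl = \pl 1,
\end{equation*}
which is exactly what is required. The main technical obstacle I anticipate is the bookkeeping in the factorial simplification step; once that is in place, the constant $3d$ in the hypothesis is forced by demanding that the common ratio be at most $1/3$, which is precisely what makes the geometric sum equal $1/2$ (and hence twice it equal $1$).
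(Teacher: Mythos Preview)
Your proof is correct and follows essentially the same route as the paper: expand via Lemma~\ref{lemma3.7}, isolate the exact negative term $P_2(a)=-\frac{(n-2)!}{n!}\sum a_i^2$, bound $P_k(a)$ for $k\ge 3$ by the preceding lemma, and reduce to a scalar coefficient inequality that collapses to a geometric series with ratio $d\cdot S/n\le 1/3$. The only cosmetic difference is that the paper bounds $\frac{n^{k-2}}{(n-2)\cdots(n-k+1)}\le\frac{d^{k-2}}{(d-2)\cdots(d-k+1)}$ via monotonicity in $n$, whereas you bound the same quantity factor by factor using $n(d-j)\le d(n-j)$; these are equivalent and lead to the identical geometric sum $2\sum_{m\ge1}(1/3)^m=1$.
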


\begin{proof} According to Lemma \ref{lemma3.8}, we have $\|\sum a_{i}^{2}\|^{1/2}\leq \frac{n}{3d}$. Using this upper bound for the average of noncommutative operators $a_{i}$ with the identity \eqref{inq3.4} where $S=\|\sum x_{i}^{2}\|^{1/2}\leq \Delta n$ and let $\Delta:=\frac{1}{3d}$, we have
	\begin{eqnarray*}
		P_{d}(x_{1},x_{2},...,x_{n})&=& 1+\sum_{k=1}^{d}\binom{d}{k}P_{k}(a_{1},a_{2},...,a_{n})\\
		&=& 1-\binom{d}{2}\frac{(n-2)!}{n!}(\sum a_i^2)+\sum_{k=3}^{d}\binom{d}{k}P_k(a_{1},a_{2},...,a_{n})  \\
		&\leq&  1-\binom{d}{2}\frac{(n-2)!}{n!}(\sum a_i^2)+ \sum_{k=3}^{d}\binom{d}{k}\frac{(n-k)!}{n!}k! \Delta^{k-2}n^{k-2}(\sum a_i^2) .\\
	\end{eqnarray*}
Now we need the following condition:
	\begin{equation} \label{eq3.5}
	\binom{d}{2}\frac{(n-2)!}{n!} \pl \stackrel{?}{\geq} \pl\sum_{k=3}^{d}\binom{d}{k}\frac{(n-k)!}{n!}k! \Delta^{k-2}n^{k-2} .	
	\end{equation}
	Simplifying the right hand side gives
	\begin{eqnarray}\nonumber
		\sum_{k=3}^{d}\binom{d}{k}\frac{(n-k)!}{n!}k! \Delta^{k-2}n^{k-2} &=& \sum_{k=3}^{d} \frac{d!}{(d-k)!k!}\frac{(n-k)!k!}{n!}\Delta^{k-2}n^{k-2} \\\nonumber
		&=& \sum_{k=3}^{d} \frac{d!}{(d-k)!}\frac{(n-k)!}{n!}\Delta^{k-2}n^{k-2} \\\label{a}
		&=& \frac{1}{n(n-1)}\sum_{k=3}^{d} \frac{d!}{(d-k)!}\frac{n^{k-2}}{(n-2)\cdots(n-k+1)}\Delta^{k-2}.\\\nonumber
	\end{eqnarray}
	Fix $k$, and denote $f(n):=\frac{n^{k-2}}{(n-2)\cdots(n-k+1)}$. Then, by taking the logarithm, we have $g(n):=\log f(n)=\sum_{i=2}^{k-1} \log\frac{n}{n-i}$. Observe that $g(n)$ is a decreasing function and thus $f(n)$ is  a decreasing function as well. Therefore, we get the inequality:
	\begin{equation}\label{b}
	\frac{n^{k-2}}{(n-2)\cdots(n-k+1)} \leq \frac{d^{k-2}}{(d-2)\cdots(d-k+1)}.
	\end{equation}
    We continue the calculation in \eqref{a} with the help of inequality \eqref{b}, we have 
	\begin{eqnarray*}
		\sum_{k=3}^{d}\binom{d}{k}\frac{(n-k)!}{n!}k! \Delta^{k-2}n^{k-2} &=& \frac{1}{n(n-1)}\sum_{k=3}^{d} \frac{d!}{(d-k)!}\frac{n^{k-2}}{(n-2)\cdots(n-k+1)}\Delta^{k-2}\\
		&\leq&\frac{1}{n(n-1)}\sum_{k=3}^{d} \frac{d!}{(d-k)!}\frac{d^{k-2}}{(d-2)\cdots(d-k+1)}\Delta^{k-2}\\
		&\leq& \frac{1}{n(n-1)}\sum_{k=3}^{d}d(d-1)d^{k-2}\Delta^{k-2} \\
		&=& \frac{d(d-1)}{n(n-1)}\frac{d\Delta(1-(d\Delta)^{d-2})}{1-d\Delta} 
		\leq \frac{d(d-1)}{n(n-1)}\frac{d\Delta }{1-d\Delta}. 
	\end{eqnarray*}
	With our choice of $\Delta=\frac{1}{3d}$ we deduce indeed $ \frac{d(d-1)}{n(n-1)}\frac{d\Delta }{1-d\Delta}  \leq \binom{d}{2}\frac{(n-2)!}{n!} $ and this completes the proof.\qedhere
\end{proof}
\section{AGM inequality for random matrices}
In this section, we prove a version of the NC-AGM inequality for random matrices. We start with a deviation inequality. Let us use the norm $\vertiii{X}_p=(E\|X\|_{B(H)}^{p})^{1/p}$ defined for a random variable $X:\Omega \rightarrow B(H)$.

\begin{prop} \label{lm4.1}
	Let $\{a_{i}\}$ be a family of self-adjoint random operators. Let $\varepsilon > 0$, $p\geq 2$, $p_d=\frac{p}{d}$ and $x_{i}=a_{i}+1$. Define  
	\begin{enumerate}
	 \item[(i)] $\varepsilon_{p}:=\vertiii{\frac{1}{n}\sum{a_{i}}-E\frac{1}{n}\sum{a_{i}}}_p$ ,\\
	 \item[(ii)] $\delta_{p}:=\frac{1}{n}\vertiii{(\sum a_{i}^2)^{1/2}}_p$ ,\\
	 \item[(iii)] $\gamma_{p}: =\max (\varepsilon_{p},\delta_{p})$.
	\end{enumerate}
	 Assume $\sum\limits_{i} Ea_{i}=0$ and $\gamma_{p} \leq \frac{1}{3d}$ and $\varepsilon=3d\gamma_{p}$ \\
	Then, $\vertiii{P_{d}(x_{1},...,x_{n})-EP_{d}(x_{1},...,x_{n})}_{p_d}\leq \varepsilon.$
\end{prop}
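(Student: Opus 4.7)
The plan is to adapt the order argument of Theorem \ref{thm1.11} to the $L^p$-setting, trading the operator-norm manipulations of that proof for H\"older's inequality on $\vertiii{\cdot}_p$-norms. First, I apply Lemma \ref{lemma3.7} --- whose proof only uses binomial expansion and is therefore valid for any $x_i = a_i + 1$ without requiring $\sum_i x_i = n$ to hold pointwise --- to obtain
$$ P_d(x_1,\ldots,x_n) - E P_d(x_1,\ldots,x_n) = \sum_{k=1}^d \binom{d}{k} \bigl(P_k(a_1,\ldots,a_n) - E P_k(a_1,\ldots,a_n)\bigr). $$
By the triangle inequality it suffices to bound each summand in $\vertiii{\cdot}_{p_d}$; and since $p_d = p/d \le p/k$ for $k \le d$, I will in fact work in the larger $\vertiii{\cdot}_{p/k}$-norm on the probability space.

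For fixed $k$, the M\"obius formula \eqref{eq2.3} gives $\tfrac{n!}{(n-k)!} P_k(a_1,\ldots,a_n) = \sum_{\nu \ge \dot{0}} \mu(\dot{0},\nu)\,[\nu]_k$. Using the tensor-unit construction of Section~2, each full partition factorises as $[\nu]_k = A_1 A_2 \cdots A_k$ with $A_j = \sum_{i_j} Z_{i_j}^j$, and noncommutative H\"older yields
$$ \vertiii{[\nu]_k}_{p/k} \le \prod_{j=1}^k \vertiii{A_j}_p. $$
The core estimate is $\vertiii{A_j}_p \le n\gamma_p$ for every $j$, proved case by case on the block containing $j$. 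If $j$ is a singleton block, then $A_j = 1 \otimes \cdots \otimes 1 \otimes \sum_i a_i$, so $\|A_j\| = \|\sum_i a_i\|$ almost surely, and the assumption $\sum_i E a_i = 0$ yields $\vertiii{A_j}_p = n\varepsilon_p$. If $j$ belongs to a non-singleton block, the matrix-unit orthogonality from the computation \eqref{in3.3} collapses $A_j A_j^*$ (or $A_j^* A_j$) pathwise to $1\otimes\cdots\otimes e_{11}\otimes\cdots\otimes \sum_i a_i^2$, so $\|A_j\| = \|\sum_i a_i^2\|^{1/2}$ and $\vertiii{A_j}_p \le n\delta_p$ (the middle-position sub-case is absorbed since $\sup_i \|a_i\| \le \|(\sum_i a_i^2)^{1/2}\|$). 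Hence $\vertiii{[\nu]_k}_{p/k} \le (n\gamma_p)^k$, and summing over partitions via $\sum_\nu |\mu(\dot{0},\nu)| = k!$ (Theorem \ref{thm22}) produces
$$ \vertiii{P_k(a_1,\ldots,a_n)}_{p/k} \le \frac{k!\,(n-k)!}{n!}\,(n\gamma_p)^k. $$

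Assembling the $k$-sum completes the proof. The $k=1$ contribution is just $d\varepsilon_p \le d\gamma_p$, since $P_1(a) - EP_1(a) = \tfrac{1}{n}\sum(a_i - Ea_i)$ outright. For $k \ge 2$, I combine the bound above with $\vertiii{P_k(a) - EP_k(a)}_{p_d} \le 2\vertiii{P_k(a)}_{p/k}$ (triangle inequality and Jensen) and estimate $n^k/[n(n-1)\cdots(n-k+1)]$ by a fixed constant (for $n \ge 2d$); the resulting terms form a geometric series in $d\gamma_p$ of ratio at most a constant times $d\gamma_p \le 1/3$, contributing at most a constant multiple of $(d\gamma_p)^2$ in total. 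The hypothesis $\gamma_p \le 1/(3d)$ is calibrated precisely so that the $k=1$ contribution plus this geometric tail fits below $3d\gamma_p = \varepsilon$. The principal obstacle is the sharp bookkeeping of these constants; all the substantive combinatorial and operator-algebraic content --- the M\"obius expansion, the tensor-unit factorisation, and the singleton/non-singleton case analysis --- is already delivered by Section~2 and Corollary \ref{th3.3}.
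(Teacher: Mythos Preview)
Your approach is essentially the paper's: binomial expansion via Lemma~\ref{lemma3.7}, M\"obius inversion to a sum over partitions, the pointwise factorisation $\|[\nu]_k\|\le\prod_j\|A_j\|$ from the tensor-unit construction, and then H\"older on the probability space. The paper does exactly this, just phrasing the H\"older step as first applying Corollary~\ref{th3.3} pointwise and then taking $p_d$-moments, rather than as ``noncommutative H\"older'' on the $A_j$; the two are the same computation.

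There is, however, a concrete slip in your final bookkeeping. You propose to bound $n^k/[n(n-1)\cdots(n-k+1)]$ by a \emph{fixed} constant for $n\ge 2d$, but this ratio is not uniformly bounded in $k$: for $k\le d$ and $n=2d$ each factor $n/(n-j)$ can be as large as $2$, so the product is only $\le 2^k$. That extra $2^k$ turns your geometric series into one in $2d\gamma_p$ rather than $d\gamma_p$, and the resulting tail $\sum_{k\ge 2}2(2d\gamma_p)^k$ does not fit below $2d\gamma_p$ under the stated hypothesis $d\gamma_p\le 1/3$ (you would need roughly $d\gamma_p\le 1/6$). You also smuggle in the assumption $n\ge 2d$, which is not in the statement. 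The paper avoids both issues by treating the \emph{entire} combinatorial factor $\dfrac{d!\,(n-k)!\,n^k}{(d-k)!\,n!}$ as a single function of $n$, observing (by taking logarithms) that it is decreasing for $n\ge d$, and evaluating at $n=d$ to get the sharp bound $d^k$. With that, the sum is $2\sum_{k\ge 1}(d\gamma_p)^k\le \dfrac{2d\gamma_p}{1-d\gamma_p}\le 3d\gamma_p$ exactly when $d\gamma_p\le 1/3$, matching the hypothesis on the nose. Replace your ``fixed constant'' step by this monotonicity argument and your proof goes through without the extra assumption.
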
 
\begin{proof}
	From the assumption above, we get that
	\raggedbottom
	 $$\vertiii{(\frac{1}{n}\sum a_{i})}_p=\varepsilon_{p}.$$ Fix a partition $\nu$. According to Theorem \eqref{lem32} and by using H\"older's inequality we have that
	\begin{align*}
	E\|[\nu]\|^{p_d}_{\infty}&\leq E\Bigg(  \|(\sum a_{i}^{2})^{1/2}\|_{\infty}^{(d-|\nu_{s}|)p_d}  \|(\sum a_{i})\|_{\infty}^{|\nu_{s}|p_d}  \Bigg)\\
	&=  E\Bigg(  \|(\sum a_{i}^{2})^{1/2}\|_{\infty}^{ \frac{(d-|\nu_{s}|)p_dd}{d}  } \cdot  \|(\sum a_{i})\|_{\infty}^{ \frac{|\nu_s|p_dd}{d}}    \Bigg)\\
	\end{align*}
	
	\begin{align*}
	&\leq  \Bigg( E(\|(\sum a_{i}^{2})^{1/2}\|^{p_dd }_{\infty}   \Bigg)^{\frac{p_d(d-|\nu_{s}|)}{p_dd}}  \Bigg( E \|(\sum a_{i})\|_{\infty}^{p_dd} \Bigg)^{\frac{p_d|\nu_{s}|}{p_dd}} \\
	&= \Bigg( E(\|(\sum a_{i}^{2})^{1/2}\|^{p}_{\infty}   \Bigg)^{\frac{p_{d}(d-|\nu_{s}|)}{ p}}  \Bigg( E \|(\sum a_{i})\|_{\infty}^{p} \Bigg)^{\frac{p_d|\nu_{s}|}{ p}} \\
	&= \Bigg(\vertiii {\sum a_{i}^{2})^{1/2} }_{p}  \Bigg) ^{p_d(d-|\nu_{s}|)}  \Bigg(  \vertiii{\sum a_{i}} _{p}   \Bigg)^{p_d|\nu_{s}|} \\\vspace{0.75in}
	&= (\delta_{p} \cdot n)^{p_d(d-|\nu_{s}|)}.(\varepsilon_{p} \cdot n)^{p_d|\nu_{s}|}\\
	&= \delta_{p} ^{p_d(d-|\nu_{s}|)}\varepsilon_{p}^{p_d|\nu_{s}|}n^{p} = \delta_{p} ^{p_d(d-|\nu_{s}|)}\varepsilon_{p}^{p_d|\nu_{s}|}n^{p_{d}d}.
	\end{align*} 
	Since $\gamma_{p}=\max(\delta_{p},\varepsilon_{p})$,
	\begin{equation}\label{lalala}
	\vertiii{[\nu]}_{p_d}=(E\|[\nu]\|^{p_d}_{\infty})^{\frac{1}{p_d}}\leq \gamma_{p_dd}^{d}\cdot n^{d} 
	\end{equation}
	 By using our definition of $\gamma_{p}$ and the upper bound for inequality \eqref{lalala} we obtain
	\begin{align}\nonumber
	&(E\|P_{k}(a_{1},...,a_{n})-EP_{k}(a_{1},...,a_{n})\|_{\infty}^{p_d})^{1/p_d} \\\nonumber
	&\leq \frac{(n-k)!}{n!}\sum|\mu(0,\nu)|\Big(E(\|[\nu]-E[\nu]\|_{\infty})^{p_d} \Big)^{1/p_d}\\\nonumber
	&\leq \frac{(n-k)!}{n!}\sum|\mu(0,\nu)|\cdot 2(E\|[\nu]\|_{\infty}^{p_d})^{1/p_d}\\
	&\leq 2\frac{(n-k)!}{n!} k!\gamma_{p_d k}^{k}n^{k}.
	\end{align}
	From the above we will have 
	\begin{align}\nonumber
	&\vertiii{P_{d}(x_{1},...,x_{n})-EP_{d}(x_{1},...,x_{n})}_{p_d}\\\nonumber
	&=\vertiii{\sum_{k=1}^{d}{{d}\choose{k}}(P_{k}(a_{1},...,a_{n})-EP_{k}(a_{1},...,a_{n}))}_{p_d}\\\nonumber
	&=(E\|\sum_{k=1}^{d}{{d}\choose{k}}(P_{k}(a_{1},...,a_{n})-EP_{k}(a_{1},...,a_{n}))\|_{\infty}^{p_d})^{1/p_d}\\\nonumber 
	&\leq \sum_{k=1}^{d}{{d}\choose{k}} (E\|(P_{k}(a_{1},...,a_{n})-EP_{k}(a_{1},...,a_{n}))\|_{\infty}^{p_d})^{1/p_d}\\\nonumber
	&\leq 2\sum_{k=1}^{d}{{d}\choose{k}} \frac{(n-k)!}{n!} k!\gamma_{p_d k}^{k}\cdot n^{k}
	=2\sum_{k=1}^{d}\frac{d!}{k!(d-k)!}\frac{(n-k)!}{n!} k!\gamma_{p_d k}^{k}\cdot n^{k}\\
	&\leq 2\sum_{k=1}^{d}\frac{d!(n-k)!n^{k}}{(d-k)!n!}
	\gamma_{p}^{k}.  \label{inq4}
	\end{align}
	Recall the definition  $\gamma_{p_dk}=\max(\delta_{p_dk},\varepsilon_{p_dk})$. Each $\delta_{p_dk},\varepsilon_{p_dk}$ is increasing since $L_{p_dk}$ is defined as probability space which is norm increasing in probability measure. Thus $\gamma_{p_dk} \leq \gamma_{p_dd}=\gamma_{p}, \forall~ 1\leq k \leq d$, which justifies the last inequality \eqref{inq4}.
	Let $f(n)=\frac{d!(n-k)!n^{k}}{(d-k)!n!}$. This function is a decreasing function in $n$, so $f(d)=\max f(n)=d^{k}.$ Then we have 
	\begin{align*}
	&\vertiii{P_{d}(x_{1},...,x_{n})-EP_{d}(x_{1},...,x_{n})}_{p_d} \\&\leq 2\sum_{k=1}^{d}(d\cdot \gamma_{p})^{k}
	=2\cdot d\cdot \gamma_{p}\frac{(1-(d\cdot \gamma_{p})^{d})}{1-d\cdot \gamma_{p}} 
	\leq 2\cdot \frac{d\cdot \gamma_{p}}{1-d\cdot \gamma_{p}}\leq \varepsilon. 
	\end{align*}
	The last inequality follows from $d\cdot \gamma_{p} \leq \frac{\varepsilon}{1- \varepsilon/2}$.
\end{proof}
	We now present conditions for positive random operators $\{x_{i}\}$ where $a_{i}=x_{i}-1$. Note that for $A:=\sum_{i=1}^{n}\frac{a_i}{n} $, we have$$E\|A- EA\|_{p} =  E    \| (\frac{\sum x_{j}}{n})-E(\frac{\sum x_{j}}{n}) \|_{p}.$$Therefore, whenever we control the ${x_{i}}$'s, we control the ${a_{i}}$'s. 
	
\begin{lemma}\label{Lem4.3}
	Let $\{x_{i}\}$ be a family of self-adjoint random operators. 
	Then $$ \vertiii{(\sum (x_i -1)^2)^{1/2}}_p \leq 6 \vertiii{(\sum x_i^2)^{1/2}}_{p} $$
	
\end{lemma}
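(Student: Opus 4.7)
My plan is to split $\sum(x_i-1)^2$ via a noncommutative Minkowski inequality and then absorb the resulting scalar tail $\sqrt{n}$ into $\vertiii{(\sum x_i^2)^{1/2}}_p$ using the normalization $\sum_i Ex_i = n\cdot 1$ that is inherited from the surrounding context of Proposition \ref{lm4.1} (where one assumes $\sum_i Ea_i=0$, i.e.\ $\sum_i Ex_i = n$). This ambient normalization is essential: without it the lemma is false (take $x_i\equiv 0$; the left side is $\sqrt{n}$ while the right side is $0$), so identifying and using it is the conceptual heart of the proof.

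For the Minkowski step, I would view $(x_i-1)_i$ as a column of $L_p$-valued entries with column norm $\vertiii{(\sum y_i^* y_i)^{1/2}}_p$. Applying the triangle inequality in this norm to $(x_i-1)_i = (x_i)_i - (1)_i$, and using that $x_i$ is self-adjoint so $(x_i-1)^*(x_i-1)=(x_i-1)^2$, gives
\begin{equation*}
\vertiii{\Big(\sum (x_i-1)^2\Big)^{1/2}}_p \;\leq\; \vertiii{\Big(\sum x_i^2\Big)^{1/2}}_p + \vertiii{\Big(\sum 1\Big)^{1/2}}_p \;=\; \vertiii{\Big(\sum x_i^2\Big)^{1/2}}_p + \sqrt{n}.
\end{equation*}
Equivalently, and to avoid any operator-level square-root inequality, one can pass to the $L_{p/2}$ norm and apply scalar subadditivity $\sqrt{a+b}\leq\sqrt{a}+\sqrt{b}$ to the resulting numerical norms.

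To control $\sqrt{n}$, I use the identity
\begin{equation*}
n\sum_i x_i^2 - \Big(\sum_i x_i\Big)^2 \;=\; \sum_{i<j}(x_i - x_j)^2 \;\geq\; 0,
\end{equation*}
which gives the operator bound $\big|\sum x_i\big|\leq \sqrt{n}\,(\sum x_i^2)^{1/2}$. Taking operator norms and passing to $L_p$-moments yields
\begin{equation*}
\vertiii{\sum x_i}_p \;\leq\; \sqrt{n}\,\vertiii{\Big(\sum x_i^2\Big)^{1/2}}_p.
\end{equation*}
Combining with Jensen, $n = \|E\sum x_i\|\leq \vertiii{\sum x_i}_p$, I conclude $\sqrt{n}\leq \vertiii{(\sum x_i^2)^{1/2}}_p$, and plugging into the Minkowski bound gives the stated inequality with constant $2$, well inside the claimed $6$ (the slack presumably reflects a coarser intermediate estimate).

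The main obstacle, I expect, is the slightly pedantic one of making the Minkowski step rigorous in the noncommutative $L_p$ setting (either by invoking the fact that the column $L_p$ norm really is a norm, or by reducing everything to $L_{p/2}$ and using scalar $\sqrt{\cdot}$-subadditivity) and spelling out that the ambient hypothesis $\sum_i Ex_i = n$ is being used to convert $\sqrt{n}$ into $\vertiii{(\sum x_i^2)^{1/2}}_p$. Once those two points are in place, each individual estimate is standard.
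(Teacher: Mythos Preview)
Your argument is correct, and you correctly diagnosed that the lemma tacitly relies on the ambient normalization $\sum_i Ex_i = n$; the paper's own proof uses exactly this (it substitutes $\frac{1}{n}\sum_i Ex_i = 1$ midway through without stating it as a hypothesis).

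The paper, however, takes a more elaborate route. It introduces the averaging map $\Phi:(y_i)_i\mapsto(\tfrac{1}{n}\sum_j y_j)_i$ on columns (a cb-contraction), sets $z_i = x_i - Ex_i$, and writes
\[
x_i - 1 \;=\; (Id+\Phi)(z)_i \;+\; Ex_i \;-\; \tfrac{1}{n}\sum_j x_j .
\]
Each piece is then estimated separately in the column norm: $(Id+\Phi)(z)$ by $2\|z\|_{\mathrm{col}}\le 2(\|x\|_{\mathrm{col}}+\|Ex\|_{\mathrm{col}})$, the term $(Ex_i)_i$ by contractivity of $E$, and the constant column $(\tfrac{1}{n}\sum_j x_j)_i$ via Cauchy--Schwarz. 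Summing the pieces produces the constant $6$. Your direct split $(x_i-1)=(x_i)+(-1)$, followed by the single observation $\sqrt{n}\le \vertiii{(\sum x_i^2)^{1/2}}_p$ (from $n=\|E\sum x_i\|\le\sqrt{n}\,\vertiii{(\sum x_i^2)^{1/2}}_p$), uses the same two ingredients---the column-norm triangle inequality and the Cauchy--Schwarz bound $(\sum x_i)^2\le n\sum x_i^2$---but organizes them more efficiently and lands on the constant $2$. The paper's detour through $\Phi$ and the centered $z_i$ does not buy anything here; your argument is genuinely shorter and sharper.
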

\begin{proof}
	Observe that $\vertiii{(\sum x_i^2)^{1/2}}_p= \vertiii{\sum x_i \otimes e_{i,1}}_{p} $ is given by the column norm. 
	Define operators $\phi :C_n(B(H))\rightarrow C_n(B(H))$ and $\Phi: C_{n}\rightarrow C_{n}$ such that $\Phi(\alpha_{i})=(\frac{1}{n}\sum\limits_{i}\alpha_{i})_{j}$ where $\phi=\Phi\otimes Id$. Then it is easy to check that $\|\Phi\|_{cb}=\|\phi\|_{cb}\leq 1$. Indeed 
	$$\|\sum\limits_{j=1}^{n} e_{j,1}\otimes \phi(y_{i})_{j}\|= \|\sum\limits_{j=1}^{n} e_{j,1} \otimes (\frac{1}{n} \sum_{i=1}^n y_i)\| 
	= \frac{1}{\sqrt{n}}\|\sum_{j=1}^{n} y_j\|  \leq \|\sum\limits_{i} e_{1,i}\otimes y_{i}\|.$$
	Denote $z_i := x_i - Ex_i$, so $\| \sum\limits_{i} e_{1,i}\otimes (Id+ \Phi )(z_i)\| \leq 2 \|\sum\limits_{i} e_{1,i}\otimes x_i\|$. Also,
	\begin{align*}
	(Id+ \phi )(z_i) &=x_i - Ex_i + \frac{1}{n} \sum x_i - \frac{1}{n} \sum E x_i =x_i -1 - Ex_i + \frac{1}{n} \sum x_i,\\
	(x_i -1)&=(Id+ \phi )(z_i) + Ex_i -\frac{1}{n} \sum x_i.
	\end{align*}
	By triangle inequality, we can get
	\begin{align*}
	&\vertiii{\sum (x_i -1)\otimes e_{i,1}} \\
	&\leq \vertiii{\sum (Id+ \phi )(z_i) \otimes e_{i,1}}+ \vertiii{\sum Ex_i \otimes e_{i,1}}+ \vertiii{\sum_j (\frac{1}{n} \sum x_i)  \otimes e_{j,1}} \\
	& \leq 2 \vertiii{ \sum z_i \otimes e_{i,1}} +\vertiii{\sum Ex_i \otimes e_{i,1}} +  \vertiii{\sum_j (\frac{1}{n} \sum x_i) \otimes e_{j,1}}\\
	& \leq 2 \vertiii{ \sum (x_i - E x_i) \otimes e_{i,1}} +\vertiii{\sum Ex_i \otimes e_{i,1}} +  \vertiii{\sum_j (\frac{1}{n} \sum x_i) \otimes e_{j,1}} \\
	& \leq 2 \vertiii{\sum x_i \otimes e_{i,1}}+3\vertiii{\sum Ex_i \otimes e_{i,1}}   + \vertiii{\frac{1}{n} \sum x_i}\\
	& \leq 2 \vertiii{(\sum x_i^2)^{\frac{1}{2}}}+ 3\vertiii{\sum x_i \otimes e_{i,1}}+\vertiii{(\sum x_i^2)^{\frac{1}{2}}}  \leq 6 \vertiii{(\sum x_i^2)^{\frac{1}{2}}}.  
	\end{align*}
	The second-to-last inequality $\vertiii{\sum Ex_i \otimes e_{i,1}} \leq \vertiii{\sum x_i \otimes e_{i,1}}$ follows from the fact that conditional expectation from $E: L_{\infty} (\Omega, B(H)) \rightarrow B(H)$ is a complete contraction. The inequality  $$\vertiii{\frac{1}{n} \sum x_i} \leq \vertiii{(\sum x_i^2)^{\frac{1}{2}}}$$ is true by the Cauchy-Schwarz inequality.\qedhere
\end{proof}

Thanks to Theorem \ref{lm4.1} and Lemma \ref{Lem4.3} we obtain the following deviation result.

\begin{theorem} \label{th4.3}
	Let $p\geq 2$, $p_d:=\frac{p}{d}$, and $\{x_{i}\}$ be a random family of positive operators such that $Ex_{i}=1$. Define 
	\begin{enumerate}
	\item [(i)] $\varepsilon_{p}:=\vertiii{\frac{1}{n}\sum{x_{i}}-E\frac{1}{n}\sum{x_{i}}}_p ,$\\
	\item [(ii)] $\delta_{p}:=\frac{1}{n}\vertiii{(\sum x_{i}^2)^{1/2}}_p$ ,\\
	\item [(iii)] $\gamma_{p}: =\max (\varepsilon_{p},4\delta_{p})$.
	\end{enumerate}
    If $3d\cdot \gamma_{p} \leq 1$ 
	then $$\vertiii{P_{d}(x_{1},...,x_{n})-EP_{d}(x_{1},...,x_{n})}_{p_d}\leq 3d\cdot \gamma_{p}.$$
\end{theorem}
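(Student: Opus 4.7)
The plan is to reduce Theorem \ref{th4.3} directly to Proposition \ref{lm4.1} via the shift $a_i := x_i - 1$. Since $Ex_i = 1$ by hypothesis, the shifted family satisfies $Ea_i = 0$, so $\sum_i Ea_i = 0$ and the mean-zero assumption of Proposition \ref{lm4.1} holds; moreover $x_i = a_i + 1$ in the notation of that proposition, so the random variable $P_d(x_1,\ldots,x_n) - EP_d(x_1,\ldots,x_n)$ is precisely what it estimates. The whole argument then reduces to showing that the parameters $\varepsilon_p^{(a)}$, $\delta_p^{(a)}$, $\gamma_p^{(a)}$ associated with $\{a_i\}$ (in the sense of Proposition \ref{lm4.1}) are dominated by the parameters $\varepsilon_p$, $\delta_p$, $\gamma_p$ defined here for $\{x_i\}$.

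The first comparison is immediate: since $a_i - Ea_i = x_i - Ex_i$, the mean-deviation parameter is identical, so $\varepsilon_p^{(a)} = \varepsilon_p$. For the column-norm parameter, I would use the triangle inequality in the $C_n$-valued $L_p$ norm,
\[ \vertiii{(\sum(x_i-1)^2)^{1/2}}_p \leq \vertiii{(\sum x_i^2)^{1/2}}_p + \sqrt{n}, \]
together with the a priori lower bound $\delta_p \geq 1/\sqrt{n}$. The latter follows from $Ex_i^2 \geq (Ex_i)^2 = I$ (Jensen in the operator-convex function $t \mapsto t^2$) combined with $E\|T\| \geq \|ET\|$ for positive $T = \sum x_i^2$. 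This yields $\delta_p^{(a)} \leq \delta_p + 1/\sqrt{n} \leq 2\delta_p$, hence $\gamma_p^{(a)} \leq \max(\varepsilon_p, 2\delta_p) \leq \gamma_p$; the constant $4$ in the definition of $\gamma_p$ provides a comfortable safety margin that is even wide enough to absorb the looser factor $6$ coming from Lemma \ref{Lem4.3}, should one prefer that route.

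With $\gamma_p^{(a)} \leq \gamma_p$ in hand, the hypothesis $3d\gamma_p \leq 1$ forces $\gamma_p^{(a)} \leq 1/(3d)$, placing us in the range of validity of Proposition \ref{lm4.1}. Applying it with $\varepsilon := 3d\gamma_p^{(a)}$ delivers
\[ \vertiii{P_d(x_1,\ldots,x_n) - EP_d(x_1,\ldots,x_n)}_{p_d} \leq 3d\gamma_p^{(a)} \leq 3d\gamma_p, \]
which is the desired bound. The only real obstacle is keeping the constants honest in the $\delta$-comparison and, in particular, extracting the a priori inequality $\delta_p \geq 1/\sqrt{n}$ from the positivity and normalization of the $x_i$; once that is in place, the passage from Proposition \ref{lm4.1} to Theorem \ref{th4.3} is essentially mechanical.
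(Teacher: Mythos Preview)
Your argument is correct and follows the paper's strategy exactly: shift to $a_i=x_i-1$ and invoke Proposition~\ref{lm4.1}. The paper's proof is the single sentence ``Thanks to Theorem~\ref{lm4.1} and Lemma~\ref{Lem4.3}\ldots''; where it cites Lemma~\ref{Lem4.3} (constant $6$) for the comparison of square-function norms, you instead give a direct estimate yielding $\delta_p^{(a)}\le 2\delta_p$ via the triangle inequality in the column norm together with the lower bound $\delta_p\ge n^{-1/2}$. That lower bound is correctly extracted from $Ex_i^2\ge(Ex_i)^2=1$ (Kadison's inequality) and Jensen for the scalar convex function $t\mapsto t^{p/2}$ ($p\ge 2$), and the resulting constant $2$ fits cleanly inside $\gamma_p=\max(\varepsilon_p,4\delta_p)$.

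One minor correction: your parenthetical remark that the factor $4$ in $\gamma_p$ would also absorb the constant $6$ from Lemma~\ref{Lem4.3} is not right---$\max(\varepsilon_p,6\delta_p)$ is not in general bounded by $\max(\varepsilon_p,4\delta_p)$, so the Lemma~\ref{Lem4.3} route would actually leave a gap with the constants as stated in the theorem. Your own sharper bound is what makes the argument close as written; drop the parenthetical.
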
 

\begin{cor} \label{col2.4} 
	 If in addition $\{x_{i}\}$ are matrix-valued i.i.d. Then 
	$$\vertiii{P_{d}(x_{1},...,x_{n})}_{p_d}\leq 1+3d\cdot \gamma_{p}.$$
\end{cor}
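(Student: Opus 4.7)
The plan is to combine the deviation bound from Theorem \ref{th4.3} with a direct computation of $EP_d(x_1,\ldots,x_n)$ under the i.i.d.\ hypothesis, then conclude by the triangle inequality for the norm $\vertiii{\cdot}_{p_d}$.

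First I would unfold the definition
\[ P_d(x_1,\ldots,x_n) \lel \frac{(n-d)!}{n!}\sum_{\substack{j_1,\ldots,j_d \\ \mbox{\scriptsize all distinct}}} x_{j_1}\cdots x_{j_d} \]
and take expectations termwise. Under the i.i.d.\ assumption together with $Ex_i=1$, for any tuple of pairwise distinct indices the operators $x_{j_1},\ldots,x_{j_d}$ are independent, so $E[x_{j_1}\cdots x_{j_d}]=E[x_{j_1}]\cdots E[x_{j_d}]=1$. Since the number of ordered tuples of distinct indices is exactly $n!/(n-d)!$, this gives $EP_d(x_1,\ldots,x_n)=1$.

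Next, I would apply the triangle inequality in the norm $\vertiii{\cdot}_{p_d}$, writing
\[ \vertiii{P_d(x_1,\ldots,x_n)}_{p_d} \kl \vertiii{EP_d(x_1,\ldots,x_n)}_{p_d} + \vertiii{P_d(x_1,\ldots,x_n)-EP_d(x_1,\ldots,x_n)}_{p_d}. \]
The first term is just the operator norm of the scalar identity, which equals $1$. The second term is bounded by $3d\gamma_p$ directly from Theorem \ref{th4.3}, whose hypotheses $Ex_i=1$ and $3d\gamma_p\le 1$ are exactly what we have assumed. Adding these gives the desired bound $1+3d\gamma_p$.

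There is no real obstacle here: the statement is essentially a reformulation of Theorem \ref{th4.3} once one observes that the i.i.d.\ mean-one assumption forces $EP_d = 1$ as an operator. The only subtle point worth being explicit about is the independence step, which requires all indices in each product to be distinct, and this is guaranteed by the definition of $P_d$ as a sum over ordered tuples of distinct indices.
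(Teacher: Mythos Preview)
Your argument is correct and follows the same overall scheme as the paper: split via the triangle inequality, bound the deviation term by Theorem~\ref{th4.3}, and handle $EP_d(x_1,\ldots,x_n)$ separately. The one difference is in that last step: you compute $EP_d=1$ directly from independence and $Ex_i=1$, whereas the paper first writes $E P_d(x_1,\ldots,x_n)=P_d(Ex_1,\ldots,Ex_n)$ and then invokes the order AGM inequality (Theorem~\ref{thm1.11}) to conclude $P_d(Ex_1,\ldots,Ex_n)\le 1$. Your route is more elementary and avoids the unnecessary appeal to Theorem~\ref{thm1.11}; the paper's route would still be relevant in a setting where the $Ex_i$ are not all exactly the identity but only average to $1$, which is not the situation here.
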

\begin{proof}
		Since ${x_{i}}$'s are matrix-valued i.i.d, then $
		E(P_{d}(x_{1},...,x_{n}))=P_{d}(Ex_{1},...,Ex_{n}).$ Moreover, for $\varepsilon:=3d\cdot \gamma_{p}$
	by (ii) in the above Theorem \ref{th4.3}, we have
	$$\| (\sum E(x_{i})^{2})^{1/2} \|=\|\sum E(x_i) \otimes e_{i,1}  \|_{C_n \otimes B(H)} \leq \|\sum x_i \otimes e_{i,1}  \|_{C_n \otimes B(H)} 
	\leq \delta_p \cdot n.$$
	Then we can use Theorem \ref{thm1.11} for $E(x_i)$'s and the classical AGM inequality (here $\delta_p \leq \frac{1}{4} \gamma_p \leq \frac{1}{4d} \textless \frac{1}{3d} $).
	\begin{align*}
	E(P_{d}(x_{1},...,x_{n}))  &\leq P_{1}(Ex_{1},...,Ex_{n})\\
	&=\sum \frac {Ex_{i}}{n}=1.
	\end{align*}
	Using the upper bound above and Theorem \ref{th4.3}, we have the required inequality.
	\begin{equation*}
	\vertiii{P_{d}(x_{1},...,x_{n})}_{p_d}\leq \vertiii{P_{1}(Ex_{1},...,Ex_{n})}_{p_d}+\epsilon\leq 1+\epsilon.\qedhere
	\end{equation*} 
\end{proof}
~~\\

\subsection{ Application to Log concave measures }
~~\\

\mbox{} \hspace{4mm} In this section we want to study random AGM inequalities for log-concave measures.
\begin{defi}
	A Borel measure $\mu$ on $n$-dimensional Euclidean space $\mathbb{R}^{n}$ is called logarithmically concave (or log-concave) if for any compact subsets $A$ and $B$ of $\mathbb{R}^{n}$ and $0\leq\lambda\leq1$ we have
	$$\mu\big(\lambda A+(1-\lambda)B\big)\geq \mu\big(A\big)^{\lambda}\mu\big(B\big)^{(1-\lambda)}.$$
\end{defi}
\vspace{0.4in}
Let us recall the isotropic measure $\mu$ in $\mathbb{R}^{n}$. 
\begin{defi}
	The isotropic measure $\mu$ is the measure which satisfies  
	$$\int_{\mathbb{R}^{n}}|\langle\theta,x\rangle|^{2}d\mu(x)=L_{\mu}\|\theta\|^{2},$$ for all~~$\theta\in R^{n}$ where $L_{\mu}$ is denoted as isotropic constant.
\end{defi} 

Also let us recall Rosenthal's inequality, which will be used frequently in this section.
\begin{theorem}[Rosenthal inequality \cite{junge2013noncommutative}]Let $A_{i}$ be a fully independent sub-algebra over $N$ where $N\subset M$ and $M$ is a von Neumann algebra, and $1\leq p <\infty$. Let $x_{i}\in L_{p}(A_{i})$ with $E_{N}(x_{i})=0.$ Then 
	$$\|\sum_{i=1}^{n}x_{i}\|_{p}\leq C\max\{ \sqrt{p}\|\sum_{i=1}^{n} E_{N}(x_{i}^{*}x_{i}+x_{i}x_{i}^{*})^{1/2}\|_{p}, p (\sum_{i=1}^{n}\|x_{i}\|^{p}_{p})^{1/p} \} .$$
\end{theorem}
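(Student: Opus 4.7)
My plan is to obtain this inequality as a straight consequence of the noncommutative Burkholder--Rosenthal inequality for martingale differences proved by Junge and Xu, with the sharp constants $\sqrt{p}$ and $p$ already baked into that reference. First I would build a filtration by setting $\mathcal{M}_0:=N$ and $\mathcal{M}_k:=(\bigvee_{i\le k}A_i)\vee N$. Because the family $(A_i)$ is fully independent over $N$, the restriction of the conditional expectation $E_{\mathcal{M}_{k-1}}$ to $L_p(A_k)$ coincides with $E_N$; combined with the hypothesis $E_N(x_k)=0$, this shows that the partial sums $S_k=\sum_{i\le k}x_i$ form an $L_p$-martingale with difference sequence $(x_k)$ adapted to $(\mathcal{M}_k)$.

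Second, I would invoke the Junge--Xu Burkholder inequality with sharp constants: for any martingale difference sequence $(d_k)$ in $L_p(M)$ with $p\ge 2$,
$$\Bigl\|\sum_k d_k\Bigr\|_p \le C\max\Bigl\{\sqrt{p}\bigl\|(\sum_k E_{\mathcal{M}_{k-1}}d_k^*d_k)^{1/2}\bigr\|_p,\ \sqrt{p}\bigl\|(\sum_k E_{\mathcal{M}_{k-1}}d_k d_k^*)^{1/2}\bigr\|_p,\ p\Bigl(\sum_k\|d_k\|_p^p\Bigr)^{1/p}\Bigr\}.$$
The $\sqrt{p}$ factor on the two conditioned square functions is obtained by extrapolating from the Burkholder--Gundy inequality at $p=2$, while the linear dependence $p$ on the $\ell_p$-diagonal term traces back to Doob's maximal inequality. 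Applying this estimate to $d_k:=x_k$ against the filtration $(\mathcal{M}_k)$ immediately bounds $\|\sum_i x_i\|_p$ in terms of conditional expectations onto $\mathcal{M}_{k-1}$.

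Third, I would collapse each $E_{\mathcal{M}_{k-1}}$ back to $E_N$ using full independence: since $x_k^*x_k$ and $x_k x_k^*$ both lie in $A_k$, independence of $A_k$ over $N$ from $\mathcal{M}_{k-1}$ gives $E_{\mathcal{M}_{k-1}}(x_k^*x_k)=E_N(x_k^*x_k)$ and similarly for $x_k x_k^*$. Merging the resulting column and row square functions into a single $E_N(x_k^*x_k+x_k x_k^*)$ expression at the cost of an absolute constant (via $a+b\le 2\max\{a,b\}$ inside the $L_p$-norm, or equivalently by operator monotonicity of $t\mapsto t^{1/2}$) produces the claimed inequality. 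The main obstacle---and the reason one ultimately cites Junge--Xu rather than reproducing the full argument---is the sharp tracking of the constants: retaining the factor $\sqrt{p}$ rather than $p$ on the conditioned square function across the whole range $2\le p<\infty$ requires a delicate duality/extrapolation argument that combines the Burkholder--Gundy inequality with the noncommutative Stein and Doob inequalities.
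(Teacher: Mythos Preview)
The paper does not prove this theorem; it is simply recalled from \cite{junge2013noncommutative} and then applied as a black box in the estimates of Section~4. There is therefore no ``paper's own proof'' to compare against.

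Your sketch is the standard route and is essentially correct for $p\ge 2$: reduce to a martingale difference sequence by building the filtration $\mathcal{M}_k=N\vee\bigvee_{i\le k}A_i$, use full independence over $N$ to identify $E_{\mathcal{M}_{k-1}}$ with $E_N$ on $A_k$, apply the noncommutative Burkholder--Rosenthal inequality of Junge--Xu with the sharp orders $\sqrt{p}$ and $p$, and then merge the column and row conditioned square functions. One small caveat: the statement as written says $1\le p<\infty$, but the form of the inequality (a max of the conditioned square function and the $\ell_p$-diagonal as an \emph{upper} bound) is only the correct one for $p\ge 2$; for $1\le p\le 2$ the roles reverse. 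Your proposal implicitly (and correctly) works in the regime $p\ge 2$, which is also the only regime in which the paper actually uses the result.
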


\begin{theorem}\label{th4.9} Let $n,~d\in \mathbb{N}$, $p\geq 2$. Let $(\mathbb{R}^{d},\mu)$ be log-concave Borel measure $\mu$ in isotropic position on $\mathbb{R}^{d}$ with constant $L$. Define random variable $y:\mathbb{R}^{d}\rightarrow \mathbb{R}^{d}$ by $y(\omega)=\frac{\omega}{\sqrt{L}}$ where $\omega\in \mathbb{R}^{d}$. Let $y_{i}$ be independent copies of $y$. Then $x_{i}(\omega):=|y_{i}(\omega)\rangle\langle y_{i}(\omega)|$ is  a $d\times d$ random matrix satisfying 
	\begin{enumerate}
		
		\item [(i)] $\forall i,$ $E x_{i}=1$, 
		
		\item [(ii)]$ \vertiii{\sum(x_i -Ex_i) }_{p} \leq \gamma_{p} \cdot n,$
	
		\item [(iii)]$\vertiii{ \sum x_{i}^{2}}^{1/2}_{p}\leq \gamma_{p}\cdot n,$\\
		where 
		$\gamma_{p} = \begin{cases}
		p^{1/2}\sqrt{\frac{d}{n}}+p^{5/2}\frac{d}{n} & p\geq \ln n~~ \text{or}\\
		2C\sqrt{\ln d}\delta^{1/2} & d\leq \frac{n}{\ln n^{5}} \\
		\mbox{~} 2C(\ln n)^{3}\delta & d\geq  \frac{n}{\ln n^{5}}\\
		
		\end{cases}$
	
	    \item [(iv)]Moreover, assume $\gamma_{p} \leq (1-\frac{2}{2+\varepsilon})\frac{1}{d}$, $\varepsilon\geq 0$, and $p_{k}:=\frac{p}{k}$. Then the following hold.
	    \begin{itemize}
	    	\item  
	 $\vertiii{P_{k}(x_{1},...,x_{n})-EP_{k}(x_{1},...,x_{n})}_{p_{k}}\leq \varepsilon.$
	 
	    \item The AGM inequality holds
	    $$\vertiii{P_{k}(x_{1},...,x_{n})}_{p_{k}}\leq (1+2\varepsilon).$$
	    \end{itemize}
	    \end{enumerate}
\end{theorem}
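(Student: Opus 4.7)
The plan is to handle the four claims in order, with most of the work going into (ii) and (iii); parts (i) and (iv) are essentially bookkeeping against the earlier machinery.

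First I would settle (i) by a direct calculation from isotropy. Writing $x_i = |y_i\rangle\langle y_i|$ with $y_i = \omega_i/\sqrt{L}$, the $(j,k)$ entry of $Ex_i$ equals $L^{-1}\int \omega_j\omega_k\,d\mu(\omega)$, and the definition of isotropic position together with the constant $L$ (and polarization of the quadratic form $\theta\mapsto \int|\langle\theta,\omega\rangle|^2\,d\mu$) gives $Ex_i = I_d = 1$. This also identifies $E[x^2]$: since $x_i^2 = \|y_i\|^2\,x_i$, we obtain $E[x_i^2] = E[\|y\|^2|y\rangle\langle y|]$, which by isotropy has operator norm of order $d$ (comparable with $E\|y\|^2 = d$ up to a log-concave constant).

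For (iii), the key identity is $\sum_i x_i^2 = \sum_i \|y_i\|^2\,x_i \le (\max_i \|y_i\|^2)\sum_i x_i$, so that
\[
\vertiii{\,\textstyle\sum_i x_i^2\,}_{p}^{1/2} \;\le\; \vertiii{\max_i\|y_i\|^2}_{p}^{1/2}\cdot \vertiii{\textstyle\sum_i x_i}_{p}^{1/2}.
\]
For log-concave isotropic $y$ the Paouris/Adamczak concentration gives $\vertiii{\|y\|^2}_p \lesssim d + p^2$ and $\vertiii{\max_i\|y_i\|^2}_p \lesssim d + (\log n)^2 + p^2$, while $\vertiii{\sum x_i}_p \lesssim n$ by isotropy and (ii). Plugging in yields the bound $\gamma_p n$ in each of the three regimes after the standard comparison between $p$ and $\log n$.

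For (ii) I would apply the noncommutative Rosenthal inequality (Theorem 4.8) to the mean-zero, independent matrices $z_i := x_i - Ex_i$. The square-function term is
\[
\sqrt{p}\,\Bigl\|\Bigl(\sum_i E z_i^*z_i + z_iz_i^*\Bigr)^{1/2}\Bigr\|_p \;\le\; \sqrt{p}\,\Bigl\|\sum_i E x_i^2 - n\,I\Bigr\|^{1/2} \;\lesssim\; \sqrt{p}\sqrt{nd},
\]
using $\|E x^2\|\lesssim d$ from the isotropy calculation above. The ``max'' term $p\,(\sum_i\|z_i\|_p^p)^{1/p}$ is controlled by $p\,n^{1/p}\vertiii{\|y\|^2}_p$, and here the three regime splits arise: for $p\ge\ln n$ the factor $n^{1/p}$ is $O(1)$ and the Paouris tail gives the $p^{5/2}d/n$ contribution; for $p<\ln n$ one uses a Gaussian-type chaining (contributing the $\sqrt{\log d}$ or $(\log n)^3$ factor) to replace $n^{1/p}$, with the threshold $d \lessgtr n/(\log n)^5$ dictating which of the two remaining cases is sharp. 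Dividing by $n$ gives $\vertiii{\sum(x_i - Ex_i)}_p/n \le \gamma_p$ with the stated $\gamma_p$.

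Finally, (iv) is immediate: the hypothesis $\gamma_p \le (1-\tfrac{2}{2+\varepsilon})/d$ implies $3d\gamma_p \le \varepsilon$, so Theorem \ref{th4.3} applied with the bounds from (ii) and (iii) gives the first assertion, and Corollary \ref{col2.4} (noting that $Ex_i = 1$ also implies the deterministic AGM $P_1(Ex_1,\dots,Ex_n) = 1$) upgrades this to $\vertiii{P_k(x_1,\dots,x_n)}_{p_k} \le 1 + \varepsilon \le 1 + 2\varepsilon$.

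The main obstacle is (ii)--(iii): the three-branch definition of $\gamma_p$ reflects the need to pick, for each range of $(p,d,n)$, the sharpest available $L_p$-concentration estimate for $\|y\|^2$ and $\max_i\|y_i\|^2$ under a general log-concave isotropic measure, and to feed it into the noncommutative Rosenthal inequality without losing the dimensional factors. Everything else is a formal consequence of the apparatus already developed in Sections 2--4.
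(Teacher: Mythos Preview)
Your outline follows the same skeleton as the paper---Rosenthal for (ii), log-concave moment bounds for the individual pieces, and an appeal to Theorem~\ref{th4.3}/Corollary~\ref{col2.4} for (iv)---but with two tactical differences worth noting. First, the paper uses Borell's $\psi_1$ inequality for log-concave measures (the ``Borel inequality'' of Milman--Schechtman) throughout, obtaining $(E\|y\|^{2m})^{1/m}\le (Cm)^2 d$ and hence $|x_1|_q\le (Cq)^2 d$; you invoke the sharper Paouris/Adamczak tail, which is fine but more than is needed for the stated $\gamma_p$. Second, for (iii) the paper does not factor through $\max_i\|y_i\|^2$: it simply uses the triangle inequality $\vertiii{(\sum x_i^2)^{1/2}}_q\le(\sum\vertiii{x_i}_q^2)^{1/2}=\sqrt{n}\,\vertiii{x_1}_q\le\sqrt{n}\,(Cq)^2 d$, which avoids your H\"older splitting (as written, your displayed product inequality needs $2p$-norms on the right, not $p$).

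The three-branch formula for $\gamma_p$ is obtained in the paper not by feeding a different concentration estimate into each regime, but from a single Rosenthal output $n^{-1}\vertiii{\sum(x_i-1)}_q\le d^{1/q}\bigl(q^{1/2}\delta^{1/2}+q^3\delta^{1-1/q}\bigr)$ with $\delta=d/n$, and then \emph{optimizing over an auxiliary parameter $q\ge p$}: the three cases correspond to the optimal $q$ being $\ln d$, $\ln n$, or $p$ itself. This is the step you allude to but do not carry out. Finally, your (iv) is correct in spirit, but the implication ``$\gamma_p\le(1-\tfrac{2}{2+\varepsilon})/d\Rightarrow 3d\gamma_p\le\varepsilon$'' is false; what the hypothesis actually gives is $d\gamma_p\le\varepsilon/(2+\varepsilon)$, which is exactly the condition $2d\gamma_p/(1-d\gamma_p)\le\varepsilon$ used at the end of the proof of Proposition~\ref{lm4.1}.
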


\begin{proof}
	We apply Rosenthal's inequality for $q\geq p$ to $x_{i}-1$ instead of $x_{i}$. 
	Let us introduce the norm in the space $L_{q}(S_{q})$ where $S_{q}$ is the Schatten class, $$|x|_{q}:=(E\|x_{i}\|^{q}_{S_{q}})^{\frac{1}{q}}=(\int\|x({\omega})\|^{q}_{q} d\mu)^{\frac{1}{q}}.$$ So, we have  
	\begin{align*}
	&\vertiii{ \sum(x_i -Ex_i) }_{q}\\
	&\leq  c \max \{ \sqrt{q} |\sum E((x_i-1)^*(x_i-1) + (x_i-1) (x_i-1)^*)|_{\frac{q}{2}}^{\frac{1}{2}}, q(\sum |x_i-Ex_i|_q^q)^{\frac{1}{q}} \}  \\
	&\leq   c\sqrt{q} |\sum E((x_i-1)^*(x_i-1) + (x_i-1) (x_i-1)^*)|_{\frac{q}{2}}^{\frac{1}{2}} + cq (\sum |x_i-Ex_i|_q^q)^{\frac{1}{q}}\\
	&\leq  2c\sqrt{q} |\sum E(x_i-1)^2 |_{\frac{q}{2}}^{\frac{1}{2}} +  cq (\sum |x_i-Ex_i|_q^q)^{\frac{1}{q}} \\
	&\leq  2c \sqrt{q}   | \sum Ex_i^2   |_{\frac{q}{2}}^{\frac{1}{2}}  +2 cqn^{1/q} |x_{1}|_{q} 
	\end{align*}
	By Rosenthal's inequality, we need to separately estimate the two terms of the right side. We denote
	\begin{equation}
	\text{I} = | (\sum Ex_i^2)^{1/2}) |_q  \text{ and }
	\text{II} = |x_{1}|_{q}. 
	\end{equation}
	We claim that $(ii)$ holds for $\gamma_{q}$ and $Ex_{i}^{2}\leq d Ex_{i}\leq cd\cdot1.$ Using Borel inequality (see \cite{milman1986asymptotic} where $\|.\|$ is seminorm), we have $$(E\|{y}\|^{q}_{X})^{\frac{1}{q}}\leq C_{q} E\|{y}\|_{X}\leq C_{q} (E\|{y}\|_{X}^{2})^{\frac{1}{2}}.$$ Recall that $E\|y\|^{2}=\sum\limits_{i=1}^{d}E|\langle e_{i},\frac{\omega}{\sqrt{L}}\rangle|^{2}=d.$~~So, we have for $x_{i}:=x_{1}=|y\rangle\langle y|$
	\begin{align*}
	\langle\theta, Ex_{1}^{2} \theta \rangle 
	&=E \langle \theta,y\rangle \langle y,y\rangle \langle \theta,y\rangle
	=E\|y\|^{2}|\langle\theta,y\rangle|^{2}\\
	&\leq (E\|y\|^{4})^{\frac{1}{2}}(E|\langle\theta,y\rangle|^{4})^{\frac{1}{2}}\\
	&\leq C_4^{4}~~ E\|y\|^{2} E(|\langle \theta,y\rangle|^{2})=C_{4}^{4}\cdot d~~\|\theta\|^{2}.
	\end{align*} 
	i.e. $Ex_{i}^{2}\leq d Ex_{i}\leq cd\cdot1$. This implies  $\| (\sum Ex_i^2)^{1/2}) \|_q  \leq C\cdot d^{1/2+1/q}$ 
	which proves our claim for (I). 
	For (II), note that the $q$-norm is defined to be $|x|_q= (E \tr|x|^q)^{\frac{1}{q}}.$
	Let's first take $q=m$ be an integer. We have
	\begin{align*}
	x_i^m =& |y_i \rangle \langle y_i |^m = y_i \rangle \langle y_i,y_i \rangle \cdots \langle y_i,y_i \rangle \langle y_i  \\
	=& y_i \rangle \|y_i \|^{2(m-1)} \langle y_i.
	\end{align*}
	Then, by using the Borel inequality (see \cite{milman1986asymptotic}), we have  
	\begin{align*}
	E \tr(x_i^m) =& E \tr(y_i \rangle \|y_i \|^{2(m-1)} \langle y_i) = E (\|y_i\|_{2}^{2m})\\
	&\leq (C\cdot 2m)^{2m}((E\|y\|_{2}^{2})^{1/2})^{2m}\\
	&\leq (C\cdot 2m)^{2m}d^{m}.
	\end{align*}
	So we get the inequality $|x|_{m} \leq (C\cdot 2m)^{2}d$ for arbitrary integer $m$. Then  for any real number $q$, we can find an integer $m$, such that $m\leq q \leq m+1$, and by interpolation between $m$ and $m+1$, we get 
	\begin{align}
	|x|_{q} \leq (C\cdot 2q)^{2}~d \label{eq4.4}. 
	\end{align} 
	Thanks to \eqref{eq4.4}, we can now prove condition (iii).
	\begin{align}\nonumber
	\vertiii{(\sum x_{i}^{2})^{1/2}}_q &\leq \vertiii{\sum x_{i}^{2}}^{1/2}_{\frac{q}{2}}
	\leq (\sum \vertiii{x_{i}^{2}}_{\frac{q}{2}})^{1/2}\\\nonumber
	&\leq (\sum \vertiii{x_{i}}_{q})^{1/2}=\sqrt{n}\vertiii{x_{1}}_{q}\\
    & \leq\sqrt{n} |x_{1}|_{q}\leq \sqrt{n} d~(C\cdot 2q)^{2}.
	\end{align}
    Combining (I) and (II) we obtain
	\begin{align*}
	\vertiii{\sum(x_i -1) }_{q} \leq  & \tilde{c}(qnd)^{1/2} d^{1/q}+ c q C n^{1/q}q^2d \\
	= & \tilde{C}(qn)^{1/2} d^{\frac{1}{2}+\frac{1}{q}} +  C' n^{1/q}q^3d. \\
	\end{align*}
	And then divide each term by $n$, we have 
	\begin{align*}
   \frac{\vertiii{\sum(x_i -Ex_i)}_{q}}{n}	
	&\leq C(q,d,n):= (\frac{q}{n})^{1/2} d^{\frac{1}{2}+\frac{1}{q}}+n^{\frac{1}{q}-1} q^{3}d\\
	&=(\frac{q}{n})^{1/2}~d^{\frac{1}{2}+ \frac{1}{q}}+\frac{qd}{n}q^{2}n^{\frac{1}{q}}\\
	&= d^{1/q} \Big(q^{1/2}(\frac{d}{n})^{1/2}+ q^3\frac{d}{n}~\frac{n^{1/q}}{d^{1/q}}\Big)\\
	&= d^{1/q}\Big((q)^{1/2} (\frac{d}{n})^{1/2}+ q^3 (\frac{d}{n})^{1-1/q}\Big).
	\end{align*}
	If we denote $\frac{d}{n}=\delta$, then
	\begin{align*}
	\frac{\vertiii{\sum(x_i -Ex_i)}_{q}}{n}&\leq d^{\frac{1}{q}}(q^{\frac{1}{2}}\delta^{\frac{1}{2}}+ q^3\delta^{1-\frac{1}{q}})\\
	&=d^{1/q}q^{1/2}\delta^{1/2}(1+q^{5/2}\delta^{1/2-1/q}).
    \end{align*}
    Now our goal is to find $\hat{\gamma}_{q} =\inf\limits_{q\geq q_{0}} {d^{1/q}q^{1/2}\delta^{1/2}(1+q^{5/2}\delta^{1/2-1/q})}$ by optimization over $q$ where $q_{0}\geq 2$. Define $f(q,\delta):= q^{5/2}\delta^{1/2-1/q}$ and consider $$g:=\ln f(q,\delta)=\frac{5}{2}\ln q+(\frac{1}{2}-\frac{1}{q})\ln \delta, $$ with derivative ${g}'=\frac{5}{2}\frac{1}{q}+\frac{1}{q^{2}}\ln \delta$. The critical point for $f(q,\delta)$ is $q(\delta)=\frac{2}{5} \ln\frac{1}{\delta}.$
    Since $f(q,\delta)$ is a convex function then it has no more than one minimum point which is $q(\delta)$. Then we have to consider the following cases for the choices of $q$,
    \begin{enumerate}
    \item $q_{0}\leq \ln d \leq q_{1}$ where $q_{1}=(\frac{1}{\delta})^{1/5}$.
    \item $\ln d \leq q_{0}\leq \ln n$ 
    \item $\ln d \leq \ln n\leq q_{0}$ 
    \end{enumerate}
   This can be done by using optimization over $q$ for the term $d^{1/q}q^{1/2}\delta^{1/2}$. For the first case, we choose $q=\ln d$ and $C(q,\delta)=2C\sqrt{\ln d}\delta^{1/2}$ where $f(q,\delta)\leq 1$. We also calculate $q_{1}$ which represents the upper bound for our choice of $q$ from $q^{5/2}\delta^{1/2}=1$. For the second case, if $(\frac{n}{d})^{1/5}\geq \ln\frac{n}{d}$, then we simply choose $q=\ln n$. This leads to $\frac{d}{n}\backsimeq \frac{1}{\ln n^{4}}\leq \frac{1}{\ln n^{5}}$.  
    We can summarize the cases in the following 
    $$\hat{\gamma}_{q} = \begin{cases}
    q^{1/2}\sqrt{\frac{d}{n}}+q^{5/2}\frac{d}{n} & q\geq \ln n ~~~\text{or}\\
    	2C\sqrt{\ln d}\delta^{1/2} & d\leq \frac{n}{\ln n^{5}} \\
    	\mbox{~} 2C(\ln n)^{3}\delta & d\geq  \frac{n}{\ln n^{5}}
    \end{cases}$$
    We apply the estimate for $q \geq p$ and appeal to Theorem \ref {th4.3} and Corollary \ref {col2.4} to deduce the AGM inequality.
		\end{proof}
\subsection{Wishart random variable matrices}

Let us recall the definition of Wishart random matrices. Let $[g^{i}_{r,s}]$ is a family of $d\times m$ Gaussian random matrices such that $i\in [1,n]$,~$r\in [1,d]$ and $s\in [1,m] $. Define $G_{i}=\frac{1}{\sqrt{m}}[g^{i}_{rs}]$ and $x_{i}=G_{i}G_{i}^{*}$. We call the matrices $x_i$ $d\times d$ Wishart random matrices. Then we have  $Ex_{i}=EG_{i}G_{i}^{*}=1$ which implies that $\sum_{i=1}^{n}Ex_{i}=n$. In this section we assume that $m\geq n$. Let us list some useful lemmas which will be used in the main theorem. Each of these lemmas proves one of the conditions of Theorem \ref{th4.3} separately.
 

\begin{lemma} \label{lemma51}
	Let $\varepsilon_{q,m,n,d}=  \Big(\frac{\sqrt{d}+ \sqrt{m}}{\sqrt{m}}\Big)^{2} \frac{q}{\sqrt{n}}$. Those 
	$d \times d $ Wishart random matrices $\{x_i\}$ from above satisfy 
	\begin{equation} 
	\frac{1}{n}\vertiii{ (\sum x_i^2)^{1/2}}_q \leq \varepsilon_{q,m,n,d} .
	\end{equation}
\end{lemma}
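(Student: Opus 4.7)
The plan is to reduce the column-type norm $\vertiii{(\sum x_i^2)^{1/2}}_q$ to classical operator-norm estimates for individual Gaussian blocks $G_i$, and then pay a union-bound price to handle the maximum. The crude but sufficient operator inequality is the starting point: since each $x_i=G_iG_i^*$ is positive, $x_i^2\le \|x_i\| x_i$, and summing gives $\sum_i x_i^2\le (\max_i\|x_i\|)\sum_i x_i$. Coupled with $\|\sum_i x_i^2\|\le n\,\max_i\|x_i^2\|$, this produces the pointwise bound
\begin{equation*}
\|(\textstyle\sum_i x_i^2)^{1/2}\| \;=\;\|\textstyle\sum_i x_i^2\|^{1/2}\;\le\;\sqrt{n}\,\max_i\|G_i\|^{2}.
\end{equation*}
So the job reduces to estimating $\vertiii{\max_i\|G_i\|^{2}}_q$.

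Next I would invoke two standard Gaussian-matrix inputs. First, Gordon's inequality gives $E\|A\|\le\sqrt{d}+\sqrt{m}$ for a $d\times m$ matrix $A$ with i.i.d.\ $\mathcal N(0,1)$ entries. Second, since the operator norm is $1$-Lipschitz in the Hilbert--Schmidt metric, Gaussian concentration yields the moment bound $(E\|A\|^{p})^{1/p}\le \sqrt{d}+\sqrt{m}+C\sqrt{p}$ for $p\ge 1$. Applied to $G_i=\tfrac{1}{\sqrt m}[g^i_{rs}]$, this reads $\vertiii{\|G_i\|}_{p}\le \frac{\sqrt{d}+\sqrt{m}+C\sqrt{p}}{\sqrt{m}}$, i.e.\ a clean $L_p$-version of the known mean bound $\frac{\sqrt d+\sqrt m}{\sqrt m}$.

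The third step is $L_q$ aggregation over $i$. A moment-wise union bound $E\max_i\|G_i\|^{2q}\le n\,E\|G_1\|^{2q}$ gives
\begin{equation*}
\vertiii{\max_i \|G_i\|^{2}}_q \;\le\; n^{1/q}\vertiii{\|G_1\|}_{2q}^{\,2}\;\le\;n^{1/q}\Big(\tfrac{\sqrt{d}+\sqrt{m}+C\sqrt{q}}{\sqrt{m}}\Big)^{2}.
\end{equation*}
Combining with Step 1 and dividing by $n$, I get
\begin{equation*}
\tfrac{1}{n}\vertiii{(\textstyle\sum_i x_i^{2})^{1/2}}_q\;\le\;\tfrac{n^{1/q}}{\sqrt n}\Big(\tfrac{\sqrt d+\sqrt m+C\sqrt q}{\sqrt m}\Big)^{2}.
\end{equation*}
In the regime where $q$ is at least of the order $\log n$ (so $n^{1/q}$ is $O(1)$) and $\sqrt q\lesssim \sqrt d+\sqrt m$, the right-hand side is majorized by $\bigl(\tfrac{\sqrt{d}+\sqrt{m}}{\sqrt m}\bigr)^{2}\tfrac{q}{\sqrt n}$; outside this range the linear factor $q$ in $\varepsilon_{q,m,n,d}$ still easily absorbs both the union-bound correction $n^{1/q}$ and the additive $\sqrt q$ from Gaussian concentration, giving the claimed bound.

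The only genuinely substantive step is Step 2, the $L_p$ operator-norm estimate for rectangular Gaussian matrices, but this is a standard package (Gordon's inequality plus Gaussian Lipschitz concentration); the rest is bookkeeping. The mildly delicate point is matching the index in the $\vertiii{\cdot}_{2q}$ bound for $\|G_1\|$ to the exponent in the maximum-moment estimate, i.e.\ organizing the Hölder-type exponents so that the final expression comes out as $\varepsilon_{q,m,n,d}$ rather than something like $\varepsilon_{q,m,n,d}$ multiplied by an extra $\frac{\sqrt d+\sqrt m}{\sqrt m}$; this is exactly what the factor $q$ in the statement is designed to absorb.
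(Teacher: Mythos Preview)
Your reduction has a real hole at the union-bound step. After the pointwise inequality you obtain
\[
\frac{1}{n}\vertiii{(\textstyle\sum_i x_i^{2})^{1/2}}_q
\;\le\;\frac{n^{1/q}}{\sqrt n}\,\vertiii{\|G_1\|}_{2q}^{\,2},
\]
and even with the optimal moment bound $\vertiii{\|G_1\|}_{2q}\lesssim\sqrt{q}\,\tfrac{\sqrt d+\sqrt m}{\sqrt m}$ this is
\[
\frac{n^{1/q}}{\sqrt n}\cdot q\Big(\tfrac{\sqrt d+\sqrt m}{\sqrt m}\Big)^{2},
\]
i.e.\ the target $\varepsilon_{q,m,n,d}$ multiplied by the extra factor $n^{1/q}$. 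Your last paragraph asserts that ``the linear factor $q$ easily absorbs the union-bound correction $n^{1/q}$'', but this is false for small $q$: with $q=2$ one has $n^{1/q}/q=\sqrt n/2$, which is unbounded as $n\to\infty$. The lemma is stated for all $q\ge 2$ with no lower bound of the form $q\gtrsim\log n$, so the argument does not close.

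The paper avoids the $n^{1/q}$ factor altogether by not passing through the maximum. It uses the triangle inequality directly in the $\vertiii{\cdot}_{q/2}$ norm:
\[
\vertiii{(\textstyle\sum_i x_i^{2})^{1/2}}_q
=\vertiii{\textstyle\sum_i x_i^{2}}_{q/2}^{1/2}
\le\Big(\textstyle\sum_i\vertiii{x_i^{2}}_{q/2}\Big)^{1/2}
\le\Big(\textstyle\sum_i\vertiii{x_i}_{q}^{2}\Big)^{1/2}
=\sqrt n\,\vertiii{x_1}_q,
\]
followed by Kahane's inequality $(E\|A\|^{2q})^{1/(2q)}\le \sqrt q\,(E\|A\|^{2})^{1/2}$ and Chevet/Gordon for the second moment. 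This produces the clean $\sqrt n\cdot q\cdot\bigl(\tfrac{\sqrt d+\sqrt m}{\sqrt m}\bigr)^{2}$ with no union-bound penalty. Your pointwise bound $\|\sum_i x_i^2\|\le n\max_i\|x_i\|^2$ is the step that loses information; replacing it by the $L_{q/2}$ triangle inequality fixes the proof.
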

\begin{proof}
	Denote $A= \frac{1}{\sqrt{m}} \sum\limits_{r,s} g_{r,s} e_{r,s}.$
	Then for all $h \in H,$~and $x=AA^{*}$
	\begin{align*}
	E(h, x^2h)= & E(h, |AA^*|^2h) =  E(h, AA^*AA^* h)
	= E(AA^*h, AA^*h) \\
	= & E\|AA^*h\|^2 
	\leq  E( \|A\|_{op}^2 \cdot \|A^{*}h\|^2) 
	\leq  E \|A\|_{op}^2 \cdot E\|A^*h\|^2. \\
	\end{align*}
	Note that $E\|A^*h\|^2= E(h,  A^*Ah) =\|h\|^2$.
	Using Chevet's inequality \cite {gordon1985some},
	\begin{equation} 
	E\|A\|=E\|\sum\limits_{r=1}^{d}\sum\limits_{s=1}^{m} g_{r,s}e_{r}{\otimes} e_{s} \|_{X \check{\otimes} Y} \leq  E(\|\sum\limits_{s=1}^{m}g_{r,s}e_{s}\|)+ E(\|\sum\limits_{r=1}^{d}g_{r,s}e_{r}\|),
	\end{equation} 
	where $X=l_{2}^{m}$ and $Y=l_{2}^{d}$.
	We deduce that if $A=\frac{1}{\sqrt{m}}\sum g^{i}_{rs} e_{r}\otimes e_{s}$ then by using Kahane's inequality we have that  
	\begin{equation}\label{iq4.7}
	(E \|A\|_{op}^2)^{1/2} \leq \sqrt{2}\Big(\frac{\sqrt{d}+ \sqrt{m}}{\sqrt{m}}\Big) =:C(d,m)
	\end{equation}
	Therefore 
	$$\vertiii{x_i}_2=(E\|x_i\|_{op}^2)^{1/2}\leq C(d,m). $$
	For $q\geq 2$
	\begin{align} 
	\vertiii{(\sum x_i^2)^{1/2}}_q 
	&= \vertiii{\sum x_i^2}_{q/2}^{1/2}  \nonumber
	\leq (\sum \vertiii{x_i^2}_{q/2})^{1/2}   \\\nonumber
	&\leq  (\sum \vertiii{x_i}_q^2)^{1/2}  \leq \sqrt{n} \vertiii{x_i}_q=\sqrt{n} [(E\|A\|^{2q})^{1/2q}]^{2}\\\label{iq.4.8}
	& \leq \sqrt{n} (\sqrt q)^{2} [(E\|A\|^{2})^{1/2}]^{2}= 2q\sqrt{n}(\frac{\sqrt{d}+ \sqrt{m}}{\sqrt{m}})^2 
	\end{align} 
	The last inequality comes from Kahane's inequality (see proposition 3.3.1 and proposition 3.4.1 in \cite{kwapien1992random}) and inequality \eqref{iq4.7}.
	Thus, taking $\varepsilon_{q,m,n,d}=  \big(\frac{\sqrt{d}+ \sqrt{m}}{\sqrt{m}}\big)^2 \frac{2q}{\sqrt{n}}$, we have
	\begin{equation*}
     \frac{1}{n}\vertiii{ (\sum x_i^2)^{1/2}}_q \leq \varepsilon_{q,m,n,d} .\qedhere
	\end{equation*}
\end{proof}

The following lemma is used to prove the first condition in Theorem \ref{th4.3}.
\begin{lemma}\label{lemma5}
	For $d\times d$ Wishart random variables $x_i$, the following is satisfied 
	\begin{equation}
	\frac{1}{n}\vertiii{\sum (x_i -E x_i)  }_q\leq \gamma^{'} _q , 
	\end{equation}
	where $\gamma^{'} _q = \begin{cases}
	C'\ln d~ \sqrt{\frac{\ln d}{n}}                              & q \leq \ln d\leq n \\
	\mbox{~} C'd^{\frac{1}{q}}q \max\{\sqrt{\frac{q}{n}},\frac{q}{n} \} & q \geq \ln d~~.
	\end{cases}$
\end{lemma}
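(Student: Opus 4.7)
The plan is to follow the same template as the proof of Theorem~\ref{th4.9}: apply Rosenthal's inequality to the independent centered sum $\sum_i (x_i - Ex_i)$. Since the Wishart matrices are i.i.d., Rosenthal collapses to
\begin{equation*}
\vertiii{\textstyle\sum_i (x_i - Ex_i)}_q \;\le\; C\sqrt{2qn}\,|E(x_1-I)^2|_{q/2}^{1/2} \,+\, C q\,n^{1/q}\,|x_1-I|_q,
\end{equation*}
so everything reduces to estimating these two Wishart moments.

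The analytic heart of the argument is an exact second-moment identity. Writing $x_1=G_1G_1^*$ with $G_1=\tfrac{1}{\sqrt m}[g^1_{rs}]$, the Isserlis/Wick formula applied entry-wise to $E(g^1_{as}g^1_{cs}g^1_{ct}g^1_{bt})$ and summed over the internal indices $c,s,t$ yields $E(x_1^2)_{ab} = \tfrac{m+d+1}{m}\,\delta_{ab}$, so
$E(x_1-I)^2 = E(x_1^2) - I = \tfrac{d+1}{m}\,I_d.$
Since $|I_d|_{q/2}=d^{2/q}$ and $m\ge n$, the first Rosenthal term divided by $n$ is of order $d^{1/q}\sqrt{q(d+1)/(mn)} \le d^{1/q}\sqrt{qd}/n$, comfortably absorbed by the target $d^{1/q} q\sqrt{q/n}$ (with slack $\sqrt{d/n}/q\le 1$) whenever $d\le n$.

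For the second Rosenthal term I would combine the deterministic inequality $|y|_q \le d^{1/q}\|y\|_{\mathrm{op}}$ with the Chevet/Kahane moment bound $(E\|x_1\|_{\mathrm{op}}^q)^{1/q}\le Cq\,C(d,m)^2$ that is already implicit in the proof of Lemma~\ref{lemma51}. Because $m\ge n\ge d$ forces $C(d,m)=O(1)$, this gives $|x_1-I|_q \le C\,d^{1/q}q$, and the second term divided by $n$ becomes of order $q^2(nd)^{1/q}/n$. For $q\ge \ln n$ the factor $n^{1/q}$ is bounded by $e$ and collapses to $Cd^{1/q}q^2/n$, matching the upper case of the claimed bound $Cd^{1/q}q\max\{\sqrt{q/n},q/n\}$. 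The remaining range $q\le \ln d$ is handled by monotonicity of $L_q$-norms on a probability space, $\vertiii{X}_q \le \vertiii{X}_{\ln d}$: invoking the estimate already obtained at $q=\ln d$, and using $d^{1/\ln d}=e$ together with the hypothesis $\ln d\le n$, produces $C'\,\ln d\sqrt{\ln d/n}$ as stated.

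\textbf{Main obstacle.} The delicate point is the second Rosenthal term: the deterministic comparison $|y|_q\le d^{1/q}\|y\|_{\mathrm{op}}$ introduces an $n^{1/q}$ factor that is only absorbed into a universal constant once $q\gtrsim \ln n$. To reach the stated transition at $q=\ln d$ rather than $q=\ln n$, one must extract additional cancellation from the Wishart deviation $x_1-I$. The cleanest route is a secondary Rosenthal decomposition of $x_1-I=\tfrac1m\sum_{s=1}^m(g_sg_s^*-I)$ as an i.i.d.\ sum of $m$ centered rank-one matrices, then combining the Bernstein-type tail of $\|g_s\|^2$ (chi-square concentration) with the matrix variance $\sum_s E(g_sg_s^*-I)^2 = \tfrac{(d+1)}{m}\cdot m\,I = (d+1)I$ already computed above. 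All other ingredients — the four-point Wick sum, the Chevet/Kahane bound, and the monotonicity lift — are routine.
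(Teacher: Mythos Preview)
Your overall strategy matches the paper's: apply Rosenthal's inequality, bound the two terms separately, then optimize over $q$ and use $L_q$--monotonicity for the small-$q$ range. Your Wick computation $E(x_1-I)^2=\tfrac{d+1}{m}I_d$ for the variance term is correct and in fact sharper than what the paper does (the paper simply bounds the first Rosenthal term via the cruder estimate $\vertiii{(\sum x_i^2)^{1/2}}_q$ from Lemma~\ref{lemma51}), but since the second term dominates this refinement is cosmetic.

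Where you go astray is the ``main obstacle'' paragraph. The $n^{1/q}$ factor is \emph{not} an obstruction in the intermediate range $\ln d\le q\le \ln n$, and no secondary Rosenthal decomposition of $x_1-I$ is needed. The paper handles it by the elementary inequality
\[
\frac{q\,n^{1/q}}{n}\;\le\;\sqrt{2}\,\sqrt{\tfrac{q}{n}}\qquad\text{for all }2\le q\le n,
\]
equivalently $n^{1/q}\le\sqrt{2}\,\sqrt{n/q}$. To see this, set $h(q)=\tfrac{1}{q}\ln n-\tfrac{1}{2}\ln(n/q)$; then $h'(q)=-\tfrac{\ln n}{q^2}+\tfrac{1}{2q}$ vanishes only at $q=2\ln n$, which is a minimum, so $h$ is maximized at the endpoints $q=2$ and $q=n$, where $h(2)=\tfrac{1}{2}\ln 2$ and $h(n)=\tfrac{\ln n}{n}\le\tfrac{1}{2}\ln 2$. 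Consequently the second Rosenthal term, divided by $n$, satisfies
\[
d^{1/q}\,\frac{q^2 n^{1/q}}{n}\;\le\;\sqrt{2}\,d^{1/q}\,q\,\sqrt{\tfrac{q}{n}}\qquad(2\le q\le n),
\]
which already matches the target $C'd^{1/q}q\max\{\sqrt{q/n},q/n\}$ on the whole range $q\ge 2$; the case $q\ge n$ is immediate from $n^{1/q}\le e$. The transition at $q=\ln d$ then comes \emph{solely} from the $d^{1/q}$ prefactor via the monotonicity step you already described. So drop the secondary decomposition and replace it with this one-line calculus argument.
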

\begin{proof}
By Rosenthal's inequality, we have 
	\begin{align*}
	&\vertiii{\sum (x_i -E x_i )  }_q\\
	&\leq (E\|\sum\limits_{i}(x_{i}-Ex_{i})\|^{q}_{q})^{\frac{1}{q}}\\
	&\leq_{c} \sqrt{q}  (E| (\sum_i Ex_i^2 )^{1/2}   |_q^q)^{1/q} +q (\sum |x_i -Ex_i|_q^q)^{1/q} \\
	& \leq_c \sqrt{q}  (E | (\sum_i Ex_i^2 )^{1/2} |_q^q)^{1/q} +  qn^{\frac{1}{q}} \cdot \max_i  (E|x_i|_q^q)^{\frac{1}{q}}
	\\
	& \leq \sqrt{q} d^{\frac{1}{q}} (E|  (\sum x_i^2)^{\frac{1}{2}}|_{\infty}^q)^{1/q} + qn^{\frac{1}{q}} d^{\frac{1}{q}} q[\frac{\sqrt{d}+\sqrt{m}}{\sqrt{m}}]^2\\
	& \leq \sqrt{qn} d^{\frac{1}{q}} q[1+\sqrt{\frac{d}{m}}]^2+qn^{\frac{1}{q}} d^{\frac{1}{q}} q[1+\sqrt{\frac{d}{m}}]^2	 \\
	& \leq d^{\frac{1}{q}}[1+\sqrt{\frac{d}{m}}]^2(\sqrt{qn}q + q^2n^{\frac{1}{q}}).
	\end{align*}
	The second-to-last inequality uses Kahane's inequality \cite{kwapien1992random} and inequality \eqref{iq.4.8}. Dividing the inequality by $n$, we obtain
	\begin{equation} \label{lem413}
	\frac{\vertiii{\sum (x_i -E x_i )  }_q}{n}   \leq d^{\frac{1}{q}}[1+\sqrt{\frac{d}{m}}]^2q(\sqrt{\frac{q}{n}} + \frac{\sqrt{q}}{n}\sqrt{q}n^{\frac{1}{q}})~.
	\end{equation}
	Let $2\leq q_{0}\leq q$. We have two cases to estimate the upper bound:
	\begin{enumerate}
	\item $q_{0} \leq \ln d\leq n$ \\
	\item $\ln d\leq q_{0}\leq q$
	\end{enumerate}
	We follow the optimization for $q$ from the proof of Theorem \ref{th4.9}. Define $f(q)=\sqrt{q}n^{\frac{1}{q}}$, and consider $g(q) = \ln f(q)=\frac{1}{2} \ln q + \frac{1}{q}\ln n$, then 
	$g'(q)= \frac{1}{2q}-\frac{\ln n}{q^2}=0$ at $ q= 2~{\ln n }$. Then 
 
	$$\frac{\sqrt{q}}{n}f(q) \leq \begin{cases}
	C\sqrt{\frac{q}{n}} & 2\leq q \textless n \\
	\mbox{~}C\frac{q}{n} & q \geq n 
	\end{cases}$$
	Moreover, by \eqref{lem413}, when $d\leq m$, we obtain
	\begin{align*}
	d^{\frac{1}{q}}[1+\sqrt{\frac{d}{m}}]^2q(\sqrt{\frac{q}{n}} +\sqrt{ \frac{q}{n}}\sqrt{q}n^{\frac{1}{q}-\frac{1}{2}}) & \leq 2Cd^{\frac{1}{q}}[1+\sqrt{\frac{d}{m}}]^2q \max\{\sqrt{\frac{q}{n}} , \frac{q}{n} \} \\
	& \leq 8Cd^{\frac{1}{q}}q \max\{\sqrt{\frac{q}{n}} , \frac{q}{n} \}. 
	\end{align*} 
	Denote  $F(d,n)= 8Cd^{\frac{1}{q}}q \max\{\sqrt{\frac{q}{n}} , \frac{q}{n} \} $. 	
	We choose $q=\ln d$ and we get that 
	
	$F(d,n)=C'\ln d~\sqrt{\frac{\ln d}{n}} $ if we have $q_{0} \leq \ln d\leq n$. Otherwise we choose $q\geq q_{0}$, and we get $F(d,n)= C'd^{\frac{1}{q}}q \max\{\sqrt{\frac{q}{n}},\frac{q}{n} \}$. 
    Moreover, $$\hat{\gamma}_{q}=\begin{cases}
     C'\ln d~ \sqrt{\frac{\ln d}{n}}                              & q \leq \ln d\leq n \\
    \mbox{~} C'd^{\frac{1}{q}}q \max\{\sqrt{\frac{q}{n}},\frac{q}{n} \} & q \geq \ln d~~.
    \end{cases}$$
     
     We apply the estimate for $q\geq p$ and appeal to  Theorem \ref {th4.3} and Corollary \ref {col2.4}.\qedhere
	
	
	
	
\end{proof}

Now, we can prove the AGM inequality for random matrices which holds up to $(1+\varepsilon).$

\begin{theorem}
	Let $\{x_{i}\}$ be a family of self-adjoint family of  $d\times d$ Wishart random matrices. For $ 2\leq p\leq \ln d \leq  n $, we have 
	
	\begin{enumerate}
		\item[(i)] $\vertiii{\sum\limits_{i} (x_{i}-E(x_{i}))}_{p}\leq \gamma_{p}n;$
		
		\item[(ii)] $\frac{1}{n}\sum\limits_{i=1}^{n}E(x_{i})=1;$
		
		\item [(iii)]$\vertiii{(\sum\limits_{i} x^{2}_{i})^{\frac{1}{2}}}_{p}\leq \gamma_{p}n, $ where $\gamma_{p}=
	C'\ln d~ \sqrt{\frac{\ln d}{n}},~~ p_{0} \leq \ln d\leq n; \\
	$

		\item [(iv)]  
		 Moreover, for $\varepsilon \geq 0$ if $\gamma_{p} \leq \frac{\varepsilon}{3k}$, $p_{k}:=\frac{p}{k}$ then the following hold.
		 \begin{itemize} 
		 	\item $\vertiii{P_{k}(x_{1},...,x_{n})-EP_{k}(x_{1},...,x_{n})}_{p_{k}}\leq \varepsilon.$
		\item The random AGM inequality holds, 
			    $$\vertiii{P_{k}(x_{1},...,x_{n})}_{p_{k}}\leq (1+2\varepsilon).$$
		\end{itemize}
		\end{enumerate}

\end{theorem}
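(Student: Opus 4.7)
The proof will essentially assemble the three bounds (i)--(iii) from the two preceding lemmas and then feed them into the general deviation framework of Theorem \ref{th4.3} and Corollary \ref{col2.4}.

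First, property (ii) is immediate: each $x_i = G_i G_i^*$ has $Ex_i = 1$ by the normalization of the Gaussian entries, so $\frac{1}{n}\sum_i E(x_i) = 1$. For property (iii), I will directly invoke Lemma \ref{lemma51} with $q=p$, which gives $\frac{1}{n}\vertiii{(\sum x_i^2)^{1/2}}_p \leq \big(\frac{\sqrt{d}+\sqrt{m}}{\sqrt{m}}\big)^2 \frac{2p}{\sqrt{n}}$. In the regime $p \leq \ln d$ and (implicitly) $d \leq m$, this bracket is $O(1)$ and the bound is dominated by $C \ln d \sqrt{\ln d / n}$, hence $\leq \gamma_p n$. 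For property (i), I apply Lemma \ref{lemma5}: in the first regime $p \leq \ln d \leq n$, one obtains exactly $\frac{1}{n}\vertiii{\sum(x_i - Ex_i)}_p \leq C' \ln d \sqrt{\ln d/n} = \gamma_p$, giving (i).

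With (i)--(iii) established, part (iv) reduces to an application of the machinery from Section 4. Since $Ex_i = 1$, the hypothesis $Ex_i = 1$ of Theorem \ref{th4.3} is satisfied. The quantities $\varepsilon_p$ and $\delta_p$ in Theorem \ref{th4.3} correspond precisely to (i) and (iii), and our assumption $\gamma_p \leq \varepsilon/(3k)$ delivers $3k \cdot \gamma_p \leq \varepsilon \leq 1$, which verifies the smallness hypothesis (for the $k$-th average product, applied with $d=k$ in the notation of Theorem \ref{th4.3}). We then conclude
\[
\vertiii{P_k(x_1,\ldots,x_n) - E P_k(x_1,\ldots,x_n)}_{p_k} \;\leq\; 3k \cdot \gamma_p \;\leq\; \varepsilon.
\]
Finally, the triangle inequality together with Corollary \ref{col2.4} (which applies because the $x_i$ are i.i.d.\ matrix-valued with $Ex_i = 1$, yielding $EP_k(x_1,\ldots,x_n) \leq 1$ via the classical AGM inequality on $P_1(Ex_1,\ldots,Ex_n)=1$) gives
\[
\vertiii{P_k(x_1,\ldots,x_n)}_{p_k} \;\leq\; \vertiii{E P_k(x_1,\ldots,x_n)}_{p_k} + \varepsilon \;\leq\; 1 + 2\varepsilon.
\]

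The plan is thus essentially bookkeeping, but the one point that deserves care is the regime analysis for $\gamma_p$: the statement restricts to $p \leq \ln d \leq n$, which is exactly the first branch of the piecewise $\gamma'_q$ in Lemma \ref{lemma5} and the regime where the Chevet-based bracket $[1+\sqrt{d/m}]^2$ in Lemma \ref{lemma51} does not deteriorate. I expect the main (minor) obstacle to be aligning the two different expressions for $\gamma_p$ coming from Lemma \ref{lemma51} and Lemma \ref{lemma5} and taking their maximum, then checking that the combined constant still fits inside a single bound of the shape $C' \ln d \sqrt{\ln d / n}$; this is precisely why the statement isolates the regime $p \leq \ln d \leq n$, where both lemmas reduce to comparable quantities.
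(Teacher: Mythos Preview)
Your proposal is correct and follows essentially the same route as the paper: (ii) is immediate from the definition of Wishart matrices, (i) is read off from Lemma~\ref{lemma5} in the regime $p\le \ln d\le n$, (iii) comes from Lemma~\ref{lemma51}, and (iv) is then a direct application of Theorem~\ref{th4.3} together with Corollary~\ref{col2.4}. Your added remarks about aligning the two $\gamma_p$-bounds and isolating the regime $p\le \ln d\le n$ are exactly the bookkeeping the paper leaves implicit.
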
 
\begin{proof}
	Condition $(ii)$ comes from definition of the Wishart random matrices. For condition $(i)$ we directly use Lemma \ref{lemma5} for the case when $p_{k}\leq \ln d \leq n$. For condition $(iii)$, we use Lemma \ref{lemma51}. This implies that all the conditions of Theorem \ref{col2.4} are satisfied, since $p_k \leq \ln d \leq n $. Thus, we get the random AGM inequality.
\end{proof}

\section{application on Pisier's construction and freely independent}
Let $(M,\tau)$ be a von Neumann algebra where $\tau$ is a faithful normal and normalized trace. An example of a finite von Neumann algebra is given by the group von Neumann algebra $L(G)$ associated to the left regular representation $\lambda(G)$ of a discrete group $G$. It is defined as the strong operator closure of the linear span of $\lambda(G)$.  Recall that $L_{p}(M,\tau)$ where $1\leq p <\infty$ is defined as the completion of $M$ with respect to the norm $\|x\|_{p}=(\tau(|x|^{p}))^{1/p}$ (see Pisier \cite{pisier2003non} for more details). Note that $L(G)=L_{\infty}(L(G))$ and $L(G)\subset L_{p}(L(G))$. We want to prove a version of the AGM inequality with respect to the norm $\|.\|_{p}$. For this version of the AGM inequality, we need the following key lemma. 
\begin{lemma}\label{l4}
Let $M$ be a von Neumann algebra. Let $\nu$ be a partition. Then there exists a group $G$ and $b_{i}(j)\in L(G)$ such that for $x_{i}(j)\in L_{p}(M)$,  the elements $X_{i}(j)=b_{i}(j) \otimes x_{i}(j) \in L_{p}(L(G)\otimes M )$ satisfy 
\begin{equation}
[\nu]=E_{M}\sum\limits_{i_1,i_2,i_3,\ldots ,i_{d}} X_{i_{1}}(1)X_{i_{2}}(2)...X_{i_{d}}(d).
\end{equation}
Moreover, 
\begin{align}\label{5}
&\|\sum\limits_{i}X_{i}(j)\|_{p}
\leq \\
&\begin{cases} \nonumber
C\max \Big\{\|(\sum x_{i}(j)^{*}x_{i}(j))^{1/2}\|_{p},\|(\sum x_{i}(j)x_{i}(j)^{*})^{1/2}\|_{p}\Big\}~& j\in A_{n.s}\in \sigma_{n.s}\\
~~\|\sum ~x_{i}(j)~\|_{p}         &\{j\}\in \sigma_{s},
\end{cases}
\end{align}
\end{lemma}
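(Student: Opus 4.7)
The plan is to adapt Pisier's group construction from \cite{pisier2000inequality} by replacing the matrix units $e_{ij}$ used in the earlier proof of Theorem \ref{lem32} with generators of free groups. The motivation is that matrix units give the right operator-norm estimates but collapse to a diagonal sup-type norm in the $L_p$ setting, which is too large; free generators, in contrast, satisfy the Haagerup--Buchholz inequality, which controls $\|\sum_i u_i \otimes x_i\|_p$ exactly by the maximum of the row and column norms appearing on the right of \eqref{5}. Concretely, for each non-singleton block $A_m=\{k_1^m<\cdots<k_{r_m}^m\}$ of $\nu$ and each $j\in\{1,\dots,r_m-1\}$ I would take an independent copy $F_n^{(m,j)}$ of the free group on $n$ generators with generators $g_i^{(m,j)}$, and let $G$ be the direct product of all of these copies. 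I would then set $b_i(k_1^m)=g_i^{(m,1)}$, $b_i(k_j^m)=(g_i^{(m,j-1)})^{-1}g_i^{(m,j)}$ for $1<j<r_m$, $b_i(k_{r_m}^m)=(g_i^{(m,r_m-1)})^{-1}$, and $b_i(k)=1$ when $k$ belongs to a singleton block.

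To recover the identity $[\nu]=E_M\sum X_{i_1}(1)\cdots X_{i_d}(d)$, I would unravel the group product across the independent direct factors of $G$. Within each free factor $F_n^{(m,j)}$ the contribution reduces to the word $g_{i_{k_j^m}}^{(m,j)}(g_{i_{k_{j+1}^m}}^{(m,j)})^{-1}$, which is the group identity precisely when $i_{k_j^m}=i_{k_{j+1}^m}$. Hence $b_{i_1}(1)\cdots b_{i_d}(d)$ is the identity of $L(G)$ exactly when the tuple $(i_1,\dots,i_d)$ is constant on every block of $\nu$. Since $E_M=\tau_G\otimes\mathrm{id}_M$ and the canonical trace on $L(G)$ detects only the identity element of the group, the surviving terms after summation are exactly those tuples compatible with $\nu$, which by definition sum to $[\nu]$.

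The norm bound \eqref{5} is then established factor by factor in the product $\prod_k \sum_i X_i(k)$. Singletons give $\sum_i X_i(j)=1\otimes\sum_i x_i(j)$, yielding the stated bound. For the first and last positions of a non-singleton block, the sum $\sum_i g_i^{(m,j)}\otimes x_i(j)$ is a free Khintchine sum in a single free factor, and Haagerup's inequality in $L_p$ (or Buchholz's sharper form) delivers the max of row and column norms directly. For a middle position the sum has the length-two form $\sum_i (g_i^{(m,j-1)})^{-1}g_i^{(m,j)}\otimes x_i(j)$ spanning two independent free factors, and Buchholz's length-$k$ free Khintchine inequality produces the same estimate.

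The main obstacle will be the middle-position bound, where each index $i$ occurs simultaneously in two distinct free factors, so one cannot directly cite the single-generator Haagerup inequality and must invoke the length-two version while keeping the right-hand side as the clean row/column expression in \eqref{5}, with no diagonal remainder term. An alternative, at the cost of enlarging $G$, is to insert auxiliary free ``marker'' unitaries that split each middle position into single-generator pieces; the factorisation of the $L_p$ norm across the independent factors of the direct product then handles the joint estimate blockwise.
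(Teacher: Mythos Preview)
The paper does not give its own proof of this lemma: it states the result, notes that $b_i(j)=1$ for singletons, and then in the remark immediately following defers entirely to Pisier's construction in \cite{pisier2000inequality}/\cite{pisier2003non} for even $p$ and to \cite{junge2007rosenthal} for general $p\ge 2$. Your proposal is precisely a sketch of Pisier's group construction, so you are reconstructing the cited argument rather than deviating from the paper.

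Your outline is correct, and the ``main obstacle'' you flag at the middle positions is less serious than you fear. Observe that for fixed $m,j$ the elements
\[
h_i \;=\; \bigl(g_i^{(m,j-1)}\bigr)^{-1} g_i^{(m,j)} \;\in\; F_n^{(m,j-1)}\times F_n^{(m,j)}
\]
themselves generate a free subgroup isomorphic to $F_n$: a reduced word $h_{i_1}^{\varepsilon_1}\cdots h_{i_k}^{\varepsilon_k}$ has second coordinate $g_{i_1}'^{\varepsilon_1}\cdots g_{i_k}'^{\varepsilon_k}$, which is trivial in $F_n$ only when the word is trivially reducible. Since the canonical trace on $L(G)$ restricts to the canonical trace on the von~Neumann subalgebra generated by the $\lambda(h_i)$, that subalgebra is a trace-preserving copy of $L(F_n)$. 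Hence the same length-one free Khintchine inequality you invoke at the endpoints applies verbatim to $\sum_i \lambda(h_i)\otimes x_i(k)$, and you obtain the row/column bound of \eqref{5} without ever needing the length-two Buchholz inequality or auxiliary marker unitaries. With this observation your three cases (singleton, endpoint, middle) are handled uniformly, and the argument is complete.
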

where $C$ is a universal constant. Note that $b_{i}(j)=1$ if $\{i\}\in \sigma_{s}$.
\begin{rem}\rm{
The norm inequality \eqref{5} was proved by Pisier for even integers $p\geq 2$ in \cite{pisier2003non}. The general case follows from \cite{junge2007rosenthal}.} 
\end{rem}
Now we can state the AGM inequality for $L_{p}(M)$ where $p\geq d.$
\begin{theorem}
Let $M$ be a von Neumann algebra and $x_{i}\in L_{p}(M,\tau)_{s.a}$ satisfy the following condition for some $\delta\geq 0$, $$\|(\sum x_{i}^{2})^{1/2}\|_{p}\leq \delta \|\sum x_{i}\|_{p}.$$ Then we have
\begin{equation}
\|P_{d}(x_{1},...,x_{n})\|_{\frac{p}{d}}\leq \Big(1+(\delta C)(d!-1)\Big)\frac{n^{d}(n-d)!}{n!}\|\frac{1}{n}\sum\limits_{1}^{n} x_{i}\|_{p}^{d}.
\end{equation}

\end{theorem}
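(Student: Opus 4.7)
The strategy is to repeat the argument of Theorem \ref{w1} with three adaptations: the operator norm is replaced by $\|\cdot\|_{p/d}$, Corollary \ref{th3.3} is replaced by Lemma \ref{l4} (the $L_p$-analogue of the matrix-unit trick of Section 2), and submultiplicativity of the operator norm is replaced by the non-commutative H\"older inequality. First I would apply the M\"obius inversion identity \eqref{eq2.3}, which writes $\langle\dot 0\rangle = [\dot 0] + \sum_{\dot 0 \lneqq \nu \leq \dot 1} \mu(\dot 0,\nu)[\nu]$. Since $\langle \dot 0\rangle = \tfrac{n!}{(n-d)!}\,P_d(x_1,\dots,x_n)$ and $[\dot 0] = (\sum_i x_i)^d$, taking $L_{p/d}(M)$-norms and the triangle inequality reduce everything to controlling $\|[\nu]\|_{p/d}$ for each $\nu > \dot 0$.

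For such a $\nu$, Lemma \ref{l4} provides elements $X_i(j)\in L_p(L(G)\otimes M)$ with $[\nu] = E_M\bigl(\prod_{j=1}^d \sum_i X_i(j)\bigr)$. The conditional expectation $E_M$ is a contraction, and the non-commutative H\"older inequality (applied to the $d$-fold product, noting $d/p = 1/(p/d)$) yields
\begin{equation*}
\|[\nu]\|_{p/d}\;\leq\;\prod_{j=1}^{d}\Bigl\|\sum_i X_i(j)\Bigr\|_p.
\end{equation*}
Lemma \ref{l4} bounds each factor by $\|\sum_i x_i\|_p$ for singleton coordinates and, using self-adjointness (so $x_i^*x_i = x_ix_i^* = x_i^2$), by $C\|(\sum_i x_i^2)^{1/2}\|_p$ for non-singleton coordinates. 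Substituting the standing hypothesis $\|(\sum x_i^2)^{1/2}\|_p \leq \delta\|\sum x_i\|_p$ into the non-singleton factors gives
\begin{equation*}
\|[\nu]\|_{p/d}\;\leq\;(C\delta)^{d-|\nu_s|}\Bigl\|\sum_i x_i\Bigr\|_p^d.
\end{equation*}

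To assemble the final bound, observe that any $\nu > \dot 0$ has at least one block of size $\geq 2$, so $d - |\nu_s|\geq 2$; in the meaningful regime $C\delta \leq 1$ this yields $(C\delta)^{d-|\nu_s|}\leq C\delta$. Combined with Theorem \ref{thm22}(iii), which gives $\sum_{\nu > \dot 0}|\mu(\dot 0,\nu)| = d!-1$, I would obtain
\begin{equation*}
\tfrac{n!}{(n-d)!}\|P_d(x_1,\dots,x_n)\|_{p/d}\;\leq\;\bigl(1+C\delta(d!-1)\bigr)\Bigl\|\sum_i x_i\Bigr\|_p^d,
\end{equation*}
and pulling out the factor $n^d$ on the right-hand side is precisely the claim.

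The main technical obstacle is the passage from the operator-norm Pisier trick of Section 2 to its $L_p$ avatar. In $B(H)$, the estimates of Theorem \ref{lem32} follow directly from submultiplicativity plus the matrix-unit orthogonality $Z_i Z_k^* = 0$ for $i\neq k$; in $L_p$, the analogous row/column bound on $\|\sum_i X_i(j)\|_p$ at non-singleton coordinates is precisely the non-commutative Rosenthal inequality signalled by the remark after Lemma \ref{l4}. Granted that input, the remainder is the same M\"obius/H\"older bookkeeping as in the operator-norm version.
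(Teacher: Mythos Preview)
Your proposal is correct and follows essentially the same route as the paper: M\"obius inversion \eqref{eq2.3}, then Lemma \ref{l4} combined with the contractivity of $E_M$ and H\"older to bound each $\|[\nu]\|_{p/d}$ by $(C\delta)^{|\nu_{ns}|}\|\sum x_i\|_p^d$, followed by the reduction $(C\delta)^{|\nu_{ns}|}\le C\delta$ under $C\delta\le 1$ and the identity $\sum_{\nu>\dot 0}|\mu(\dot 0,\nu)|=d!-1$. The paper's sketch is identical in structure, and it likewise imposes the hypothesis $\delta C\le 1$ at the last step.
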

We will only give the sketch of the proof of this theorem since it is similar to the proof of Theorem \ref{w1} for $p_{d}=\frac{p}{d}\geq 1$.
\begin{proof}
By using Lemma \ref{l4}, H\"older's inequality and the contraction of conditional expectation we have that
\begin{align*}
\|\langle\sigma \rangle\|_{p_{d}}&\leq \|\sum x_{i}\|_{p}^{d} + \sum\limits_{\upsilon \gneqq \dot{0}} |\mu(\dot{0},\nu)| C^{|v_{n.s}|}\|(\sum x_{i}^{2})^{1/2}\|_{p}^{|v_{n.s}|} \|\sum x_{i}\|_{p}^{|v_{s}|}\\
&\leq \|\sum x_{i}\|_{p}^{d} + \sum\limits_{\upsilon \gneqq \dot{0}} |\mu(\dot{0},\nu)| (\delta C)^{|v_{n.s}|}\|\sum x_{i}\|_{p}^{d}.
\end{align*}
Thus for $\delta C\leq 1$
\begin{equation}
\|P_{d}(x_{1},...,x_{n})\|_{p_{d}}\leq (1+(\delta C)(d!-1))\frac{n^{d}(n-d)!}{n!}\|\frac{1}{n}\sum x_{i}\|_{p}^{d}.\qedhere
\end{equation}
\end{proof}
\begin{rem}
	If $\delta \leq 1 $, we get the AGM inequality with a constant $C(d,n)=C^{d}d^{d}$.
\end{rem}
\vspace{0.1in}
As a matter of completeness, we want to include the limit case of the Wishart random matrices as an application for the AGM inequality. Let's first give the definition of freely independent von Neumann algebra (for more details see \cite {voiculescu1992free}).
\begin{defi}
Let $\{A_{i}\}$ be a family of unital von Neumann subalgebras of $A$. Then $\{A_{i}\}$  is called a freely independent algebra (with respect to a unital linear functional $\phi$ ) if $\phi(x_{1}... x_{n})=0$ whenever $\phi(x_{j})=0$ for all $x_{j}\in A_{i_{j}}$ and $i_{1}\neq i_{2},i_{2}\neq i_{3},...$
\end{defi}
We say that operators $x_{i}\in A_{i}$ are freely independent if their algebra $\{A_{i}\}$ are freely independent. In the following theorem we prove the deviation inequality up to $\varepsilon$ and apply this to the AGM inequality.
\begin{theorem}
If $\{x_{i}\}$ are freely independent in von Neumann algebra $M$ such that
\begin{enumerate}
\item $E_{M}(x_{i})=1$
\item $\sup\limits_{i}\|x_{i}\|\leq C$ and $2+(4\sqrt{n})C\leq \frac{\varepsilon n}{3d},$
\end{enumerate}
then 
\begin{enumerate}
\item $\|P_{d}(x_{1},...,x_{n})-EP_{d}(x_{1},...,x_{n})\|_{\infty}\leq \varepsilon$
\item $\|P_{d}(x_{1},...,x_{n})\|_{\infty}\leq  1+ \varepsilon.$
\end{enumerate}
\end{theorem}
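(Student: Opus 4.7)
The plan is to mirror the strategy of Proposition 4.1 and Corollary 4.4, replacing probabilistic independence by freeness and the mixed norm $\vertiii{\cdot}_p$ by the operator norm $\|\cdot\|_\infty$ on $M$. Set $a_i := x_i - 1$, so that $E_M(a_i) = 0$ and the family $\{a_i\}$ is still freely independent over the base algebra. The argument then splits into a deviation estimate giving (1), and an identification $E_M(P_d) = 1$ which reduces (2) to (1) via the triangle inequality.

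For the deviation estimate, note that Corollary 3.3 is a deterministic norm bound and so applies verbatim in any tracial von Neumann algebra. Combining it with the M\"obius inversion in Proposition 2.5 exactly as in Proposition 4.1 gives
\[
\|P_d(x_1,\ldots,x_n) - E_M P_d(x_1,\ldots,x_n)\|_\infty \leq \sum_{k=1}^{d}\binom{d}{k}\frac{(n-k)!}{n!}\sum_{\nu}|\mu(\dot{0},\nu)|\prod_{j\in\nu_s}\Bigl\|\sum_i a_i\Bigr\|_\infty \prod_{j\in\nu_{ns}}\Bigl\|\bigl(\sum_i a_i^2\bigr)^{1/2}\Bigr\|_\infty.
\]
I would then control the two basic norms. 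For the non-singleton factor, $\|a_i\|_\infty \leq C+1$ and positivity of $a_i^2$ give $\|(\sum_i a_i^2)^{1/2}\|_\infty \leq \sqrt{n}(C+1)$. For the singleton factor I would invoke a free Khintchine/Voiculescu-type estimate, valid for free, centered, self-adjoint elements,
\[
\Bigl\|\sum_i a_i\Bigr\|_\infty \leq 2\Bigl\|\bigl(\sum_i a_i^2\bigr)^{1/2}\Bigr\|_\infty + \max_i \|a_i\|_\infty,
\]
which after combining with the previous bound is absorbed into $2 + 4\sqrt{n}\,C$. Writing $\gamma_\infty := \max\bigl(\tfrac{1}{n}\|\sum_i a_i\|_\infty,\,\tfrac{1}{n}\|(\sum_i a_i^2)^{1/2}\|_\infty\bigr)$, the hypothesis becomes $3d\,\gamma_\infty \leq \varepsilon$, and the geometric-series argument at the end of the proof of Proposition 4.1 then produces (1).

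For (2), I would show $E_M(P_d(x_1,\ldots,x_n)) = 1$. By the binomial identity of Lemma 3.7 this amounts to $E_M(P_k(a_1,\ldots,a_n)) = 0$ for every $1 \leq k \leq d$. But every monomial $a_{i_1} a_{i_2} \cdots a_{i_k}$ occurring in $\langle\dot{0}\rangle$ has pairwise distinct indices, so its factors lie in distinct free subalgebras and are individually centered under $E_M$; the definition of freeness then forces $E_M(a_{i_1}\cdots a_{i_k}) = 0$. Combining $\|E_M(P_d)\|_\infty = 1$ with (1) via the triangle inequality yields $\|P_d\|_\infty \leq 1 + \varepsilon$.

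The main obstacle is locating a free-probability inequality that yields precisely the constants $2 + 4\sqrt{n}\,C$ appearing in the hypothesis. The noncommutative Rosenthal inequality (as used in Section~5 of the excerpt) and Voiculescu's norm estimate both deliver such a bound up to absolute constants, but the delicate point is the correction term proportional to $\max_i\|a_i\|_\infty$, which must be absorbed into the leading $2$; once this is granted, the rest of the proof is a faithful translation of the random-matrix argument into the free setting.
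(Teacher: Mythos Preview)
Your approach is correct and essentially matches the paper's. The one substantive difference, which also resolves the ``main obstacle'' you flag at the end, is the precise form of Voiculescu's inequality used. The paper invokes the operator-valued version with \emph{conditional} variances,
\[
\Bigl\|\sum_i a_i\Bigr\| \;\le\; \sup_i\|a_i\| + 2\Bigl\|\Bigl(\sum_i E_M(a_i^2)\Bigr)^{1/2}\Bigr\|,
\]
and applies the \emph{same} inequality to the column expression $\|(\sum_i a_i^2)^{1/2}\| = \|\sum_i e_{i1}\otimes a_i\|$. Since $\|a_i\|\le 1+C$ and $E_M(a_i^2)\le E_M(x_i^2)\le \|x_i\|\,E_M(x_i)\le C$, both quantities are bounded by $(1+C)+2\sqrt{nC}\le 2+4\sqrt{n}\,C$ (using $C\ge 1$, which is forced by $1=\|E_M(x_i)\|\le\|x_i\|$). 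This produces the hypothesis constant directly, without the trivial bound $\sqrt{n}(C+1)$ or the $a_i^2$-version of Khintchine you propose; your bounds also fit once $C\ge 1$ is noted, but the conditional-variance form is the intended ingredient.

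For part (2) your argument is equivalent to the paper's and arguably cleaner: the paper observes that, by freeness and the absence of repeated indices in $P_d$, one has $E_M P_d(x_1,\ldots,x_n)=P_d(E_Mx_1,\ldots,E_Mx_n)=P_d(1,\ldots,1)=1$ and then appeals (somewhat redundantly) to the order-AGM theorem, whereas you compute $E_M(a_{i_1}\cdots a_{i_k})=0$ directly from the definition of freeness and conclude via Lemma~3.3.
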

\begin{proof}
Let $a_{i}=x_{i}-1$. By assumption we have $E_{M}(a_{i})=0$. By simple modification of Voiculescu's inequality \cite{junge2005embedding} , we get that 
\begin{align}\nonumber
\|(\sum a_{i}^{2})^{1/2}\|=\|\sum e_{i1}\otimes a_{i}\|&\leq \sup \|a_{i}\|+2\|(\sum E_{M}(a_{i}^{2}))^{1/2}\| \\\nonumber
&\leq 2(1+C)+2\sqrt{n}C\nonumber\\
&\leq 2+(4\sqrt{n}C)\leq \frac{\varepsilon n}{3d}.
\end{align}
Indeed, $\|a_{i}\|=\|x_{i}-1\|\leq 1+\|x_{i}\|\leq 1+C$ and 
\begin{align*}
E_{M}(a_{i}^{2})&=E_{M}(x_{i}-E_{M}(x_{i}))^{2}=E_{M}(x_{i}^{2})-E_{M}(x_{i})^{2}\\
&\leq E_{M}(x_{i}^{2})=E_{M}(x_{i}^{1/2}|x_{i}|x_{i}^{1/2})\\
&\leq \|x_{i}\|E_{M}(x_{i})=\|x_{i}\|\leq C.
\end{align*} 
Again, using Voiculescu's inequality we have, 
\begin{align}
&\|\sum a_{i}\|\leq \sup \|a_{i}\|+2\|(\sum E_{M}(a_{i}^{2}))^{1/2}\|\leq 2(1+C)+2\sqrt{n}C\leq \frac{\varepsilon n}{3d}.
\end{align}
Following the proof of Proposition \ref{lm4.1}, we get 

\begin{equation}
\|[\nu]\|\leq \|(\sum a_{i}^{2})^{1/2}\|^{d-|v_{s}|} \|\sum a_{i}\|^{|v_{s}|}\leq (\frac{\varepsilon n}{3d})^{d}.
\end{equation}
Applying the techniques of Proposition \ref{lm4.1} to the case $p=\infty$, we have 
\begin{equation}
\|P_{k}(a_{1},...,a_{n})-EP_{k}(a_{1},...,a_{n})\|\leq 2\frac{(n-k)!}{n!} k!(\frac{\varepsilon n}{3d})^{k}.
\end{equation}
Then we have
\begin{align}\nonumber
\|P_{d}(x_{1},...,x_{n})-EP_{d}(x_{1},...,x_{n})\|&=\|\sum_{k=1}^{d}{{d}\choose{k}}(P_{k}(a_{1},...,a_{n})-EP_{k}(a_{1},...,a_{n}))\|\\
&\leq 2\sum\limits_{k=1}^{d}\underbrace{\frac{d!(n-k)!}{(d-k)!n!}n^{k}}_{\text{$f(n)$ is a decreasing function}}(\frac{\varepsilon}{3d})^{k}\\
&\leq 2\sum\limits_{k=1}^{d}(d)^{k}(\frac{\varepsilon}{3d})^{k}
\leq \varepsilon.\label{5.6}\nonumber
\end{align}
Then we have to apply Theorem \ref{thm1.11} for $y_{i}=Ex_{i}$ instead of $x_{i}$, where $\frac{\sum y_{i}}{n}=1$. Note that by free independence, we have $EP_{d}(x_{1},...,x_{n})=P_{d}(Ex_{1},...,Ex_{n})$ using the fact that $\{x_{n}\}$ in $P_{d}(x_{1},...,x_{n})$ has no repetition.
\begin{equation*}
\|P_{d}(x_{1},...,x_{n})\|\leq \|P_{1}(Ex_{1},...,Ex_{n})\|+\epsilon\leq 1+\epsilon.\qedhere
\end{equation*}  
\end{proof}
\begin{rem}\rm{
The norm version of the AGM inequality also holds for the family of freely independent $\{x_{i}\}$.
Indeed, we have that 
\begin{equation}
\|P_{d}(x_{1},...,x_{n})\|\leq (1+\tilde{\varepsilon}) \|\frac{1}{n}\sum x_{i}\|^{d}.
\end{equation}
In this case we use again the Voiculescu inequality and deduce that  
$$\|\frac{1}{n}\sum x_{i}-\frac{1}{n}\sum Ex_{i}\|\leq \frac{\varepsilon}{3d}.$$
This implies $\|\frac{1}{n}\sum x_{i}\|\geq 1-\frac{\varepsilon}{3d}.$
Hence, $$\|P_{d}(x_{1},...,x_{n})\|^{1/d}\leq \frac{(1+\varepsilon)^{1/d}}{(1-\frac{\varepsilon}{3d})}\|\frac{1}{n}\sum x_{i}\|.$$
Note that $\frac{(1+\varepsilon)^{1/d}}{(1-\frac{\varepsilon}{3d})}\approx \frac{1+\varepsilon}{1-\varepsilon}$. This is true  since we have $$(1-x)^{n}\geq (1-nx)$$ for $x\in [0,1]$ and $n\geq 1$.
Applying this inequality for $x=\frac{\varepsilon}{3d}$, we have $$(1-\frac{\varepsilon}{3d})^{d}\geq (1-\frac{\varepsilon}{3d}d)=(1-\frac{\varepsilon}{3}).$$ Thus, AGM inequality is true up to the constant $\frac{1+\varepsilon}{1-\frac{\varepsilon}{3}}\approx 1+\tilde{\varepsilon}.$} 
\end{rem}

Another interesting application for freely independent copies $\{x_{i}\}$ is given as follows:
\begin{cor}
Let $\{x_{i}\}$ be self-adjoint freely independent copies over an algebra $B$ such that $E_{B}(x_{1})=1_{B}$ and $\|x_{1}\|\leq C$. Then the AGM inequality holds up to $(1+\varepsilon)$.
\end{cor}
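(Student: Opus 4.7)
The plan is to reduce this amalgamated version to the previous theorem by replacing the scalar-valued Voiculescu inequality used there with its operator-valued analogue over the subalgebra $B$. Set $a_{i} = x_{i} - 1_{B}$, so that $E_{B}(a_{i}) = 0$ and $\|a_{i}\| \leq 1 + C$. Since the $x_{i}$ are identically distributed, the element $E_{B}(a_{i}^{2}) = E_{B}(x_{i}^{2}) - 1_{B}$ is the same for every $i$, and bounded in operator norm by the same factorization as in the preceding theorem:
\begin{equation*}
E_{B}(a_{i}^{2}) \leq E_{B}(x_{i}^{2}) = E_{B}(x_{i}^{1/2}\, |x_{i}|\, x_{i}^{1/2}) \leq \|x_{i}\|\, E_{B}(x_{i}) \leq C \cdot 1_{B}.
\end{equation*}

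Next, I would apply the operator-valued (amalgamated) Voiculescu inequality to obtain
\begin{equation*}
\Bigl\|\sum a_{i}\Bigr\| \leq \sup_{i}\|a_{i}\| + 2\Bigl\|\Bigl(\sum E_{B}(a_{i}^{2})\Bigr)^{1/2}\Bigr\| \leq (1+C) + 2\sqrt{n\,C},
\end{equation*}
and the analogous bound for $\|(\sum a_{i}^{2})^{1/2}\|$. As in the preceding theorem, the hypothesis $2 + 4\sqrt{n}\,C \leq \varepsilon n/(3d)$ is implicitly imposed so that both quantities are controlled by $\varepsilon n/(3d)$.

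Then I would replicate the combinatorial argument of Proposition \ref{lm4.1} and the preceding theorem: each full-partition term satisfies $\|[\nu]\| \leq (\varepsilon n/(3d))^{d}$, and the M\"obius inversion together with the factorial estimate $\sum_{\nu}|\mu(\dot{0},\nu)| = d!$ gives
\begin{equation*}
\|P_{d}(x_{1},\ldots,x_{n}) - E_{B} P_{d}(x_{1},\ldots,x_{n})\| \leq \varepsilon.
\end{equation*}
Because only distinct indices occur in $P_{d}$ and the $x_{i}$ are free over $B$, one has $E_{B}P_{d}(x_{1},\ldots,x_{n}) = P_{d}(E_{B}x_{1},\ldots,E_{B}x_{n}) = P_{d}(1_{B},\ldots,1_{B}) = 1_{B}$, whence $\|P_{d}(x_{1},\ldots,x_{n})\| \leq 1 + \varepsilon$. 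A final division by the lower estimate $\|\tfrac{1}{n}\sum x_{i}\| \geq 1 - \varepsilon/(3d)$ (from Voiculescu's inequality again) upgrades this to the AGM form $\|P_{d}(x_{1},\ldots,x_{n})\|^{1/d} \leq (1+\tilde{\varepsilon})\,\|\tfrac{1}{n}\sum x_{i}\|$, exactly as in the Remark preceding the corollary.

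The main obstacle is justifying the two structural facts that were available for free over $\mathbb{C}$: the operator-valued Voiculescu inequality of the form $\|\sum a_{i}\| \lesssim \sup_{i}\|a_{i}\| + \|(\sum E_{B}(a_{i}^{2}))^{1/2}\|$, and the factorization $E_{B}P_{d}(x_{1},\ldots,x_{n}) = P_{d}(E_{B}x_{1},\ldots,E_{B}x_{n})$ under amalgamated freeness. Both are classical in the amalgamated setting (the former via Junge--Xu type Rosenthal inequalities and the latter as a direct consequence of the moment-cumulant definition of freeness over $B$), but the constants and the indexing must be tracked carefully to match the $\varepsilon n/(3d)$ threshold used in Theorem \ref{thm1.11}.
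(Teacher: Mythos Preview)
Your argument is correct but takes a different route from the paper. You extend the preceding theorem verbatim to the amalgamated setting: center to $a_i=x_i-1_B$, invoke the operator-valued Voiculescu inequality to bound $\|\sum a_i\|$ and $\|(\sum a_i^2)^{1/2}\|$ by $\varepsilon n/(3d)$, run the M\"obius/partition combinatorics of Proposition~\ref{lm4.1}, and then use $E_BP_d(x_1,\dots,x_n)=P_d(1_B,\dots,1_B)=1_B$ to conclude. The paper instead works in $L_p$ for $d\le p\le\infty$ and appeals to the earlier $L_p$-AGM theorem (the one with the hypothesis $\|(\sum x_i^2)^{1/2}\|_p\le\delta\|\sum x_i\|_p$): it applies the Voiculescu--Rosenthal inequality for free variables to get $\|(\sum x_i^2)^{1/2}\|_p\le \tilde c\,n^{1/p}\|x_1\|_p+\sqrt{n}\,\|(E_Bx_1^2)^{1/2}\|_p$, pairs this with the lower bound $\|\sum x_i\|_p\ge n\|E_Bx_1\|_p-A$ to produce $\delta_n\sim C_n/\sqrt{n}\to 0$, and deduces $\|P_d\|_{p/d}\le(1+\varepsilon)\|\tfrac1n\sum x_i\|_p^d$ for $n$ large. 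Your approach is shorter and more transparent, and avoids the $L_p$ machinery; the paper's approach buys the full range $d\le p\le\infty$ in one stroke rather than only the operator-norm case.
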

\begin{proof}
Using the free independence for the $\{x_{i}\}$'s, where $d\leq p\leq \infty$ we get   
\begin{enumerate}
\item  $E_{B}(x_{i})=1_{B}$
\item $\|x_{i}\|_{p}=\|x_{1}\|_{p}\leq C$
\item $\|(\sum x_{i}^{2})^{1/2}\|_{p}\leq \tilde{c}\|x_{i}\|_{p}n^{1/p}+\sqrt{n}\|(E_{M}x_{i}^{2})^{1/2}\|_{p}$
\end{enumerate}
Indeed, for the property (3) we just apply a version of Voiculescu's inequality  for free variables \cite{junge2007rosenthal}, 
\begin{align*}
&\|\sum x_{i}\otimes e_{i1}\|_{p}\leq c (\sum\|x_{i}\|^{p})^{1/p} + \|(\sum E_{M}(x_{i}^{*}x_{i}))^{1/2}\|_{p}\\
&\leq \tilde{C} n^{1/p}+\sqrt{n}\|(E_{M}x_{1}^{2})^{1/2}\|_{p}
\end{align*}
Note that 
\begin{align*}
&\|\sum\limits_{1}^{n} x_{i}\|_{p}\geq \|\sum\limits_{1}^{n}E_{M}(x_{i})\|_{p}-\|\sum\limits_{1}^{n} \Big(x_{i}-E_{M}(x_{i})\Big)\|_{p}\\
&\geq n\|E_{M}(x_{1})\|_{p}-\underbrace{\Big (\tilde{C}n^{1/p}+\sqrt{n} \|E_{M}(x_{i}^{2})^{1/2}\|_{p}\Big)}_\text{A}
\end{align*}
Now, if $ A\leq \frac{n}{2}\|E_{M}(x_{1})\|$, then we have 
\begin{align*}
\|(\sum\limits_{1}^{n} x_{i}^{2})^{1/2}\|_{p}&\leq2 \frac{\tilde{C}n^{1/p}+\sqrt{n}\|E_{M}(x_{1}^{2})^{1/2}\|_{p}}{n\|E_{M}(x_{1})\|_{p}} \|\sum\limits_{1}^{n} x_{i}\|_{p}\\
&= n^{-1/2}\underbrace{\Bigg(\frac{2\tilde{C}n^{1/p-1/2}+2\|E_{M}(x_{1}^{2})^{1/2}\|_{p}}{\|E_{M}(x_{1})\|_{p}}\Bigg)}_{C_{n}} \|\sum\limits_{1}^{n} x_{i}\|_{p}.
\end{align*}
Then we get 
\begin{equation*}
\|(\sum\limits_{1}^{n} x_{i}^{2})^{1/2}\|_{p}\leq \delta_{n} \|\sum\limits_{1}^{n}x_{i}\|_{p},
\end{equation*}
where $\delta_{n}=\frac{C_{n}}{\sqrt{n}}$. Then for $\sqrt{n}\gg d!$ we have $\delta_{n} \rightarrow 0$.
This implies that when $n$ is large enough we get the AGM inequality as follows:
\begin{equation}
\|P_{d}(x_{1},...,x_{n})\|_{\frac{p}{d}}\leq (1+\varepsilon) \|\sum\limits_{1}^{n} x_{i}\|^{d}_{p}.\qedhere
\end{equation}
\end{proof}

\mbox{}
\nocite{*}
\bibliographystyle{plain}
\bibliography{agm}
\end{document}